\let\oldmarginpar\marginpar
\renewcommand\marginpar[1]{\-\oldmarginpar[\raggedleft\footnotesize #1]
{\raggedright\footnotesize #1}}
\numberwithin{equation}{section}
\newtheorem{theorem}{Theorem}[subsection]
\newtheorem{proposition}[theorem]{Proposition}
\newtheorem{corollary}[theorem]{Corollary}
\newtheorem{conjecture}[theorem]{Conjecture}
\newtheorem{lemma}[theorem]{Lemma}
\theoremstyle{remark}
\newtheorem{remark}[theorem]{Remark}
\newtheorem{example}[theorem]{Example}
\theoremstyle{definition}
\def\bes{\begin{eqnarray*}}
\def\ees{\end{eqnarray*}}
\def\bm{{\bm a}}
\def\F{{\rm F}}
\def\bfX{{\bf X}}
\def\pr{{\rm pr}}
\def\calM{\mathcal{M}}
\def\dcb{\mathcal{D}_c^b}
\def\IC{{\rm IC}}
\DeclareMathOperator{\Spec}{Spec} 
\DeclareMathOperator{\Tr}{Tr} \DeclareMathOperator{\Ind}{Ind}
\DeclareMathOperator*{\bigboxtimes}{\mathlarger{\mathlarger{\mathlarger\boxtimes}}}
\def\calI{{\mathcal{I}}}
\def\calX{{\mathcal{X}}}
\def\calL{{\mathcal{L}}}
\def\calY{{\mathcal{Y}}}
\def\car{{\mathfrak{car}}}
\def\calF{{\mathcal{F}}}
\def\calH{\mathcal{H}}
\def\calT{{\mathcal{T}}}
\def\frakg{{\mathfrak{g}}}
\def\fraku{{\mathfrak{u}}}
\def\frakt{{\mathfrak{t}}}
\def\frakb{{\mathfrak{b}}}
\def\frakl{{\mathfrak{l}}}
\def\1{{\bf 1}}
\def\I{{\rm I}}
\def\F{\mathbb{F}}
\def\Q{\overline{\mathbb{Q}}_\ell}
\def\T{{\rm{T}}}
\def\t{{\rm t}}
\def\calC{{\mathcal C}}
\def\Z{\mathbb{Z}}
\def\gl{{\rm gl}}
\newcommand{\nc}{\newcommand}
\nc{\bM}{{\mathbb M}}
\nc{\Sp}{{\rm Sp}}
\nc{\R}{{\rm R}}
\def\Nr{{\rm Nr}}
\def\calS{{\mathcal{S}}}
\nc{\Rt}{{\rm R}_2}
\nc{\op}[1]{\mathop{\mathchoice{\mbox{\rm #1}}{\mbox{\rm #1}}
{\mbox{\rm \scriptsize #1}}{\mbox{\rm \tiny #1}}}\nolimits}
\nc{\al}{\alpha}
\nc{\ep}{\varepsilon} \nc{\ga}{\gamma} \nc{\Ga}{Q}
\nc{\la}{\lambda} \nc{\La}{\Lambda} \nc{\si}{\sigma}
\nc{\Sig}{{Q}} \nc{\Om}{\Omega} \nc{\om}{\omega}
\nc{\coker}{{\rm coker}}
\nc{\SL}{{\rm SL}} \nc{\GL}{{\rm GL}} \nc{\PGL}{{\rm PGL}}
\nc{\G}{{\rm G}}
\nc{\bV}{{\mathbb V}}
\nc{\card}{{\rm card}}
\nc{\beq}[1]{\begin{eqnarray}\label{#1}}
\nc{\eeq}{\end{eqnarray}}
\def\U{{\rm U}}
\nc{\cpt}{{\op{cpt}}} \nc{\Dol}{{\op{Dol}}} \nc{\DR}{{\op{DR}}}
\nc{\B}{{\op{B}}} \nc{\Triv}{\op{Triv}} \nc{\Hod}{{\op{Hod}}}
\nc{\Log}{{\op{Log}}} \nc{\Exp}{{\op{Exp}}} \nc{\Est}{E_{\op{st}}}
\nc{\Hst}{H_{\op{st}}} \nc{\Left}[1]{\hbox{$\left#1\vbox to
  10.5pt{}\right.\nulldelimiterspace=0pt \mathsurround=0pt$}}
\nc{\kight}[1]{\hbox{$\left.\vbox to
  10.5pt{}\right#1\nulldelimiterspace=0pt \mathsurround=0pt$}}
\nc{\LEFT}[1]{\hbox{$\left#1\vbox to
  15.5pt{}\right.\nulldelimiterspace=0pt \mathsurround=0pt$}}
\nc{\kIGHT}[1]{\hbox{$\left.\vbox to
  15.5pt{}\right#1\nulldelimiterspace=0pt \mathsurround=0pt$}}
\nc{\bee}{{\bf E}} \nc{\bphi}{{\bf \Phi}}
\begin{document}

\title{Note on a conjecture of Braverman-Kazhdan}

\author{ G\'erard Laumon
\\ {\it Universit\'e Paris-Saclay et CNRS}
\\{\tt gerard.laumon@u-psud.fr } \and Emmanuel Letellier \\ {\it
  Universit\'e Paris Cit\'e, IMJ-PRG, CNRS} \\{\tt
 emmanuel.letellier@imj-prg.fr}
 }

\pagestyle{myheadings}

\maketitle

\begin{abstract}Given a connected reductive algebraic group $G$ defined over a finite field $\F_q$ together with a representation $\rho^\flat:G^\flat\rightarrow \GL_N$ of the dual group of $G$  (in the sense of Deligne-Lusztig), Braverman and Kazhdan \cite{BK} defined an exotic Fourier operator on the space of complex valued functions on $G(\F_q)$. Under some assumption on $\rho^\flat$, they gave a conjectural formula for the Fourier kernel which they prove when $G=\GL_n$ for some $n$. In these notes we give a simple proof of their conjecture for any $G$ without any assumption on $\rho^\flat$.

  \end{abstract}
\tableofcontents

\section{Introduction}\label{intro}

Let $G$ be a connected reductive algebraic groups over $\overline{\F}_q$ with geometric Frobenius $F:G\rightarrow G$. Let $G^\flat$ be a connected reductive algebraic group over $\overline{\F}_q$ with geometric Frobenius $F^\flat$ such that $(G,F)$ and $(G^\flat,F^\flat)$ are in duality (in the sense of Deligne-Lusztig \cite{DL}). Lusztig defined a partition of the set $\widehat{G^F}$ of all irreducible $\Q$-characters of $G^F$ (with $\ell$ a prime not dividing $q$) indexed by the set of $F^\flat$-stable semisimple conjugacy classes of $G^\flat$. We call the parts of this partition the \emph{Lusztig series} and we denote by ${\rm LS}(G)$ the set of Lusztig series of $(G,F)$. 

\bigskip

Let $(G',F)$ be a \emph{standard pair}, i.e. $G'$ is an $F$-stable Levi subgroup of some parabolic subgroup of $\GL_n$ where $F:\GL_n\rightarrow \GL_n$ is the standard Frobenius. We assume given  a morphism $\rho^\flat:G^\flat\rightarrow G'{^\flat}=G'$ which commutes with Frobenius. It induces a map between semisimple conjugacy classes fixed by Frobenius. Using Lusztig's parametrization we thus  get a map
$$
\t_\rho:{\rm LS}(G)\rightarrow {\rm LS}(G').
$$
Notice that in the case where $\rho^\flat$ is normal, i.e. its image is a normal subgroup of $G'$, there exists a morphism $\rho:G'\rightarrow G$ in duality with $\rho^\flat$ (see Proposition \ref{normal}) and in this case the map $\t_\rho$ is given by the pull-back functor  along the map $\rho^F:G'{^F}\rightarrow G^F$ (see Proposition \ref{func}). Although the morphism $\rho$ does not exists in general, we keep it in the notation $\t_\rho$.

\bigskip

{\bf Exotic Fourier operators}
\bigskip

\noindent Consider the space $\calC(G'{^F})$ of all functions $G'{^F}\rightarrow\Q$. Fix a non-trivial additive character $\psi:\F_q\rightarrow\Q^\times$ and consider the standard \emph{Fourier kernel}

$$
\phi^{G'}:=\psi\circ\Tr:G'{^F}\rightarrow\Q.
$$
The standard Fourier operator ${\bf F}^{G'}:\calC(G'{^F})\rightarrow\calC(G'{^F})$ is then defined by

$$
{\bf F}^{G'}(f)(x)=\sum_{y\in G'{^F}}\phi^{G'}(xy)f(y).
$$
We also consider the standard \emph{gamma function} $\gamma^{G'}_o:\widehat{G'{^F}}\rightarrow\Q$ defined by the formula

$$
\gamma^{G'}(\chi)=\sum_{g\in G'{^F}}\frac{\phi^{G'}(g)\chi(g)}{\chi(1)}.
$$
For any irreducible character $\chi$ of $G'{^F}$ we then have

$$
{\bf F}^{G'}(\chi)=\gamma^{G'}(\chi)\, \chi^\vee
$$
where $\chi^\vee$ denotes the dual character of $\chi$, and one can recover the Fourier kernel from the gamma function by the formula

\begin{equation}
\phi^{G'}=\sum_{\chi\in\widehat{G'{^F}}}\chi(1)\gamma^{G'}(\chi)\chi^\vee.
\label{Fourier-for}\end{equation}
The function $\gamma^{G'}$ is \emph{admissible}, i.e. it is constant on Lusztig series (see below Theorem \ref{commuteres}), and so induces a function on ${\rm LS}(G')$ which we still denote by $\gamma^{G'}$ as no confusion should arise.
Using the  morphism $\rho^\flat$, we can thus transfer $\gamma^{G'}$ into an admissible function on $\widehat{G^F}$ by putting

$$
\gamma^G_\rho:=c_{G,G'}\,\gamma^{G'}\circ\t_\rho,
$$
where $c_{G,G'}=\epsilon_G\epsilon_{G'}\, q^{v_G-v_{G'}}$ with $\epsilon_G=(-1)^{\F_q-\text{rank}(G)}$ and $v_G$ the dimension of the unipotent radical of a Borel subgroup of $G$.
\bigskip

Define then the corresponding Fourier kernel  $\phi^G_\rho:G^F\rightarrow\Q$ by the right hand side of the formula (\ref{Fourier-for}) with $\gamma^{G'}$ (resp. $G'$) replaced by $\gamma^G_\rho$ (resp. $G$) and denote by ${\bf F}^G_\rho:\calC(G^F)\rightarrow\calC(G^F)$ the corresponding \emph{exotic Fourier operator} with kernel $\phi^G_\rho$, i.e.

$$
{\bf F}^G_\rho(f)(x)=\sum_{y\in G^F}\phi^G_\rho(xy)f(y)
$$
for all $f\in\calC(G^F)$.
\bigskip

These Fourier transforms ${\bf F}^G_\rho$ were first studied by Braverman and Kazhdan in \cite{BK}. They conjectured an explicit formula for the kernel $\phi^G_\rho$.

 The aim of these notes is to provide a simple proof of their conjecture. Moreover we do not make any assumption on $\rho^\flat$ unlike in \cite{BK}.

\bigskip

{\bf Description of $\phi^G_\rho$ in terms of Deligne-Lusztig theory}
\bigskip

\noindent We will consider the quotient stack $[G/G]$ for the conjugation action and we will identify freely the space $\calC([G/G]^F)$ of functions on $[G/G]^F$ with the subspace of $\calC(G^F)$ of $G^F$-invariant functions (this is possible because $G$ is connected).
\bigskip

Fix a maximally split $F$-stable maximal torus $T$ of $G$ and denote by $N$ the normalizer  of $T$ in $G$. 

Recall that the $G^F$-conjugacy classes of $F$-stable maximal tori of $G$ are parametrized by the set $H^1(F,N)$ of $F$-conjugacy classes. For $\overline{w}\in H^1(F,N)$ and $w\in N$ a representative of $\overline{w}$, we denote by $T_w$ an $F$-stable maximal torus of $G$ in the $G^F$-class of $F$-stable maximal tori corresponding to $\overline{w}$. Then the Frobenius $F$ on $T_w$ corresponds to the Fobenius $F\circ w$ on $T$.

A way to put all together the $F$-stable maximal tori of $G$ is to consider the quotient stack $[T/N]$. Indeed

\begin{align*}
[T/N]^F&=\coprod_{\overline{w}\in H^1(F,N)}[T^{F\circ w}/N^{F\circ w}]\\
&\simeq\coprod_{\overline{w}\in H^1(F,N)}[T_w^F/N_w^F]
\end{align*}
where $N_w=N_G(T_w)$.

\bigskip

\noindent For any $F$-stable maximal torus $H$ of $G$, put 
$$
\gamma^H_\rho:=c_{H,G'}\,\gamma^{G'}\circ\t_{\rho_H}:\widehat{H^F}\rightarrow\Q
$$
where $\rho^\flat_H:H^\flat\rightarrow G'$ is the composition $H^\flat\hookrightarrow G^\flat\rightarrow G'$ and denote by $\phi^H_\rho:H^F\rightarrow\Q$ the associated kernel.

By Lemma \ref{remcru}, we have the following explicit description : let $T'$ be an $F$-stable maximal torus of $G'$ that contains the image of $H^\flat$ and denote by $\rho_H:T'\rightarrow H$ the morphism in duality with $H^\flat\rightarrow T'$ (see Remark \ref{rem1}). Then

$$
\phi^H_\rho=c_{H,T'}\, \rho_{H\, !}(\psi\circ\Tr|_{T'}).
$$

The above collection of kernels $\phi^H_\rho$ provides thus an explicit kernel

$$
\phi^{[T/N]}_\rho\in\calC([T/N]^F),
$$
and so a Fourier transform 

$$
{\bf F}^{[T/N]}_\rho:\calC([T/N]^F)\rightarrow\calC([T/N]^F).
$$
We denote by ${\bf F}^{[G/G]}_\rho:\calC([G/G]^F)\rightarrow\calC([G/G]^F)$ the restriction of  ${\bf F}^G_\rho$ to invariant functions.

In \S \ref{LusInd} we define an injective $\Q$-linear map $\I_{[T/N]}^G:\calC([T/N]^F)\hookrightarrow \calC([G/G]^F)$ in terms of Deligne-Lusztig induction. If $G$ is of type $A$ with connected center, then it is an isomorphism.

We have the following theorem (see Theorem \ref{propmain}).

\begin{theorem}The following diagram commutes

\begin{equation}
\xymatrix{\calC([T/N]^F)\ar[d]_{{\bf F}_\rho^{[T/N]}}\ar@{^{(}->}[rr]^{(\I_{[T/N]}^G\circ\epsilon)(v_G)}&&\calC([G/G]^F)\ar[d]^{{\bf F}_\rho^{[G/G]}}\\
\calC([T/N]^F)\ar@{^{(}->}[rr]^{\I_{[T/N]}^G}&&\calC([G/G]^F).}
\label{comdiag0}\end{equation}

In particular

\begin{equation}
\I_{[T/N]}^G(\phi_\rho^{[T/N]})=\phi_\rho^G.
\label{mainfor0I}\end{equation}
\end{theorem}

We prove the commutativity of the diagram (\ref{comdiag0}) using the spectral definition of ${\bf F}^{[T/N]}_\rho$ and ${\bf F}^{[G/G]}_\rho$ (i.e. the definition in terms of gamma functions). Since the function $\phi^{[T/N]}_\rho$ has an explicit construction, Formula (\ref{mainfor0I}) gives an explicit realization of $\phi^G_\rho$.

The fact that Formula (\ref{mainfor0I}) is a consequence of the commutativity of the diagram is explained in Remark \ref{remcom}(2).

\bigskip

{\bf Geometric realization of $\phi^G_\rho$}
\bigskip

\noindent For simplicity we assume that $G'=\GL_n$ and that $\rho^\flat$ restricts to a morphism $\rho^\flat:T^\flat\rightarrow T'$ where $T'$ is the maximal torus of $G'$ of diagonal matrices. Then let $\rho:T'\rightarrow T$ be a morphism in duality with $\rho^\flat$ (see above Proposition \ref{normal}). If we put $W:=N/T$, the morphism $\rho$ is naturally $W$-equivariant.

Consider the Artin-Schreier sheaf $\mathcal{L}_\psi$ on $\overline{\F}_q$ and put $\Phi^{T'}=\Tr^*(\mathcal{L}_\psi)$. We then consider the complex 

$$
\Phi^T_\rho:=\rho_ !\Phi^{T'}[{\rm dim}\, T']
$$
in the ``derived category'' $\dcb(T)$ of constructible $\ell$-adic sheaves on $T$. This complex is naturally $W$-equivariant, comes with a natural Weil structure $F^*\Phi^T_\rho\simeq\Phi^T_\rho$, and the two are compatible (see \S \ref{Prelim}). Moreover $\Phi^T_\rho$ is a perverse sheaf (not necessarily irreducible) and so we get a perverse sheaf $\Phi^{[T/N]}_\rho$ equipped with a Weil structure $\varphi^{[T/N]}_\rho:F^*\Phi^{[T/N]}_\rho\simeq\Phi^{[T/N]}_\rho$.

For a stack $\mathfrak{X}$ equipped with a geometric Frobenius, denotes by $\calM(\mathfrak{X};F)$ the category of perverse sheaves on $\mathfrak{X}$ equipped with a Weil structure.

Put

$$
(\Phi^G_\rho,\varphi^G_\rho):=\calI_{[T/N]}^G(\Phi^{[T/N]}_\rho,\varphi^{[T/N]}_\rho)
$$
where $\calI_{[T/N]}^G:\calM([T/N];F)\rightarrow\calM([G/G];F)$ is a geometric realization of $\I_{[T/N]}^G$ (see \S \ref{ind}).
\begin{theorem}We have

\begin{equation}
{\bf X}_{\Phi^G_\rho,\varphi^G_\rho}=\epsilon_G\, \phi^G_\rho
\label{BK}\end{equation}
where ${\bf X}_{\Phi^G_\rho,\varphi^G_\rho}$ is the characteristic function of $(\Phi^G_\rho,\varphi^G_\rho)$.
\end{theorem}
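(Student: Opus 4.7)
The strategy is to compute $\bfX_{\Phi^G_\rho,\varphi^G_\rho}$ directly by unwinding the construction of $\Phi^G_\rho$ through the sheaf--function dictionary, and to compare the output with the formula for $\phi^G_\rho$ supplied by Theorem~\ref{maintheo}. The goal is to establish
\begin{equation*}
\bfX_{\Phi^G_\rho,\varphi^G_\rho}=\frac{(-1)^{\F_q-{\rm rank}(G)}}{|W_G(T)|}\sum_{w\in W_G(T)}c_{T_w}c_{L'_{w'}}^{-1}R_{T_w}^G\bigl((\rho_w)_!(\phi^{L'_{w'}})\bigr),
\end{equation*}
whose right-hand side is $(-1)^{\F_q-{\rm rank}(G)}\phi^G_\rho$ by the first main theorem.

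In the first step I would analyze the characteristic function of $\Phi^T_\rho=\rho_!\Phi^{L'}[\dim L'](\dim V_{L'})$ with respect to each of its twisted Weil structures. The $W_G(T)$-equivariance of $\Phi^T_\rho$, obtained from the homomorphism $w\mapsto w'$ and the compatibility of $\rho$ with these Weyl actions, produces for every $w\in W_G(T)$ a Weil structure $w\varphi$ whose characteristic function lives on $T_w^F$. Applying the Grothendieck--Lefschetz trace formula to the proper map $\rho_w:L'_{w'}\to T_w$, and absorbing the sign $(-1)^{\dim L'}$ from the shift and the factor $q^{-\dim V_{L'}}$ from the Tate twist, I get
\begin{equation*}
\bfX_{\Phi^T_\rho,w\varphi}=(-1)^{\dim L'}q^{-\dim V_{L'}}(\rho_w)_!\bigl(\bfX_{\Phi^{L'},w'\varphi}\bigr).
\end{equation*}
I then identify $\bfX_{\Phi^{L'},w'\varphi}$ with $\phi^{L'_{w'}}={^*}R_{L'_{w'}}^{G'}(\phi^{G'})$: since $G'=\GL_n$ and $\phi^{G'}=\psi\circ\Tr$, this reduces to the classical observation that the Lusztig restriction of $\psi\circ\Tr$ to any Levi coincides with $\psi\circ\Tr$ on that Levi, a consequence of the additivity of the trace on block-triangular matrices.

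In the second step I would invoke the sheaf-theoretic Mackey-type formula: for any $W_G(T)$-equivariant Weil complex $(\calF,\varphi)$ on $T$, the characteristic function of the $W_G(T)$-invariant part of ${\rm Ind}_T^G(\calF)$ decomposes as the average
\begin{equation*}
\bfX_{{\rm Ind}_T^G(\calF)^{W_G(T)},\varphi}=\frac{1}{|W_G(T)|}\sum_{w\in W_G(T)}R_{T_w}^G\bigl(\bfX_{\calF,w\varphi}\bigr),
\end{equation*}
up to global sign, which is the $\ell$-adic incarnation of the decomposition of a parabolically induced representation into its $W$-isotypes. Substituting the output of the first step into this formula produces the desired identity.

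The principal obstacle is the precise bookkeeping of signs and normalizations: the combination of the shift and Tate twist in the definition of $\Phi^T_\rho$, of the constants $c_{T_w}$ and $c_{L'_{w'}}^{-1}$, and of the sign conventions in Lusztig induction from twisted tori (which typically contribute a factor $(-1)^{\F_q-{\rm rank}(G)-\F_q-{\rm rank}(T_w)}$) must combine to yield exactly the global factor $(-1)^{\F_q-{\rm rank}(G)}$. A related subtlety is the careful construction of the $W_G(T)$-equivariant structure on $\Phi^T_\rho$ so that the $w$-twist on $T$ corresponds, under $\rho_w$, to the $w'$-twist on $L'$; this compatibility is precisely what ensures the decomposition over $w\in W_G(T)$ aligns with the sum appearing in Theorem~\ref{maintheo}.
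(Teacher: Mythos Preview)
Your approach is essentially the same as the paper's: apply the general formula expressing the characteristic function of the $W_G(T)$-invariant part of ${\rm Ind}'{_T^G}(K)$ as the Weyl-group average of $R_{T_w}^G(\bfX_{K_w,\varphi_w})$, identify each summand via the trace formula for $(\rho_w)_!$, and compare with Theorem~\ref{maintheo}.

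The one point you correctly flag as unresolved---the sign bookkeeping---is handled in the paper by a device you do not mention: the natural $W_G(T)$-equivariant structure on $\Phi^T_\rho$ (coming from $w'^*\Phi'=\Phi'$) is \emph{twisted} by the linear character $\alpha(w)=(-1)^{\ell(w)+\ell(w')}$ before taking invariants. With this twist the characteristic function on $T_w$ picks up the extra factor $\alpha(w^{-1})$, and the paper then uses
\[
(-1)^{\ell(w)}=\epsilon_G\,\epsilon_{T_w},\qquad (-1)^{\ell(w')}=\epsilon_{G'}\,\epsilon_{T'_{w'}}=(-1)^n\epsilon_{L'_{w'}},
\]
so that $(-1)^{\dim L'}q^{-\dim V_{L'}}\alpha(w^{-1})=\epsilon_G\,c_{T_w,L'_{w'}}$. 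This gives exactly $\bfX_{\Phi_w,\tilde{\varphi}_w}=\epsilon_G\,\phi^{T_w}_\rho$ (Lemma~\ref{lem1}), and the global $\epsilon_G$ then factors out of the Weyl-average. Without this $\alpha$-twist the signs in your first-step formula would not match $c_{T_w}c_{L'_{w'}}^{-1}$, so this is the missing ingredient in your outline.
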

\bigskip

The complex $\calI_{[T/N]}^G(\Phi^{[T/N]})$ is in fact the $W$-invariant part of the induced complex $\Ind_T^G(\Phi^T_\rho)$, where $\Ind_T^G$ is the parabolic induction from $T$ to $G$, and so the formula (\ref{BK}) is the one conjectured by Braverman-Kazhdan \cite{BK} under some restriction on $\rho^\flat$ (see Remark \ref{remark} for more details). They proved their conjecture when $G$ is a general linear group \cite{BK}. In the same paper, under the same assumption on $\rho^\flat$, they also conjectured that the geometric Fourier transform
$$
\mathcal{F}^{[G/G]}_\rho:\dcb([G/G])\rightarrow\dcb([G/G])
$$
defined from the  kernel $\Phi^G_\rho$ commutes with $\Ind_T^G$, and observed that this geometric conjecture implies Formula (\ref{BK}). 
This geometric conjecture reduces to the following one.

\begin{conjecture}Let $B=TU$ be a Borel subgroup of $G$. The direct image with proper support of $\Phi^G_\rho$ along the map $
G\rightarrow G/U$ is supported on $T=B/U\hookrightarrow G/U$.
\label{GBK}\end{conjecture}
Conjecture \ref{GBK} was proved by Cheng and Ng\^o \cite{NC} when $G$ is a general linear group. Shortly after the first version of this paper was posted, Chen T.-H. \cite{C} proved it for an arbitrary connected reductive group  (under some restriction on the characteristic).
\bigskip

Conjecture \ref{GBK} interprets commutativity of Fourier with induction in terms of kernels while our 	approach to prove (\ref{BK}) is to interpret commutativity of Fourier with induction in terms of gamma functions which are more flexible (see equivalence of assertions (1) and (3) in Proposition \ref{equiv}).
\bigskip

Let us finally remark that we have a natural geometric version   $\calF^{[T/N]}_\rho:\calM([T/N])\rightarrow\calM([T/N])$ of ${\bf F}^{[T/N]}_\rho$ and that when $G$ is of type $A$ with connected center, then $\calI_{[T/N]}^G$ is an equivalence  $\calM([T/N])\simeq \calM([G/G])$ (see \cite{LL}). The desired geometric Fourier transform $\calF^{[G/G]}_\rho:\calM([G/G])\rightarrow\calM([G/G])$ can then be defined so that the geometric version of the diagram (\ref{comdiag0}) commutes.

\section{Reminders and notations}

Our base field is a finite field $\F_q$ with $q$ elements and we choose an algebraic closure $\overline{\F}_q$. We fix a prime $\ell$ which is different from the characteristic of $k$ and we fix an algebraic closure $\Q$ of the field of $\ell$-adic numbers. We choose an involution $\Q\rightarrow\Q$, $z\mapsto\overline{z}$ such that $\overline{z}=z^{-1}$ if $z$ is root of unity.

\subsection{Reductive groups}\label{reductive}

If $T$ is a torus defined over $\F_q$ with Frobenius $F$,  we have the multiplicative norm map $\Nr_{F^r/F}:T(\F_{q^r})\rightarrow T(\F_q)$, $x\mapsto xF(x)\cdots F^{r-1}(x)$.

Let $G$ be a connected reductive algebraic group over $\F_q$ with a geometric Frobenius $F:G\rightarrow G$ associated with some $\F_q$-structure on $G$ and denote by $\mathcal{L}=\mathcal{L}_G:G\rightarrow G$, $g\mapsto g^{-1}F(g)$ the Lang map. For a maximal torus $T$ of $G$ we denote by $X(T)$ the character group and by $Y(T)$ the co-character group. We denote by $Z_G$ the center of $G$ and  for any $x\in G$ we denote by $C_G(x)$ the centralizer of $x$ in $G$.

\bigskip

\textbf{Relative Weyl groups}

\bigskip

\noindent For a Levi subgroup $L$ of (some parabolic subgroup of) $G$ we put

$$
W_G(L):=N_G(L)/L,
$$
where $N_G(L)$ denotes the normalizer of $L$ in $G$. 


\begin{example} A Levi factor $L$ of a parabolic subgroup of $\GL_n$ is $\GL_n$-conjugate to $L_o=(\GL_{n_1})^{a_1}\times\cdots\times(\GL_{n_r})^{a_r}$ for some positive integers $a_1,\dots,a_r$ and $n_1>\cdots >n_r$ such that $\sum_{i=1}^ra_in_i=n$. 
Note that the symmetric group in $m$ letters $S_m$ acts on each $(\GL_s)^m$ as $w\cdot(g_1,\dots,g_m)=(g_{w^{-1}(1)},\dots,g_{w^{-1}(n)})$. Therefore we have an action of $S_{a_1}\times\cdots\times S_{a_r}$ on $L_o$ and 

$$N_{\GL_n}(L_o)\simeq L_o\rtimes (S_{a_1}\times\cdots\times S_{a_r}),$$
and so $W_{\GL_n}(L_o)\simeq S_{a_1}\times\cdots\times S_{a_r}$. 
\label{n}\end{example}

Recall that if we fix an $F$-stable Levi factor $L$ of some parabolic subgroup of $G$, then  the $G^F$-conjugacy classes of $F$-stable Levi factors (of some parabolic subgroup of $G$) that are conjugate under $G$ to $L$ are parametrized by the set $H^1(F,N_G(L))=H^1(F,W_G(L))$ of $F$-conjugacy classes of $W_G(L)$. For $w\in N_G(L)$ (or $w\in W_G(L)$), we then denote by $L_w$ a representative of the $G^F$-conjugacy class of $F$-stable Levi factors corresponding to the class of $w$ in $H^1(F,N)$. The pair $(L_w,F)$ is then isomorphic to the pair $(L,F\circ w)$ where $F\circ w$ is the Frobenius $t\mapsto F(w^{-1}tw)$ on $L$. More precisely, we have $L_w=g^{-1}Lg$ with $g\in G$ such that $gF(g^{-1})=w$.

\bigskip

We say that an $F$-stable maximal torus of $G$ is \emph{maximally split} if it is contained in some $F$-stable Borel subgroup of $G$.

\bigskip

{\bf Duality}
\bigskip

\noindent Let $T$ be a maximally split $F$-stable maximal torus of $G$ with corresponding Weyl group $W$ and let $B$ be an $F$-stable Borel subgroup of $G$ containing $T$. Consider another connected reductive group $G^\flat$ endowed with a Frobenius $F^\flat:G^\flat\rightarrow G^\flat$. Let $T^\flat$ be a maximally split $F^\flat$-stable maximal torus of $G^\flat$, $B^\flat$ an $F^\flat$-stable Borel subgroup containing $T^\flat$ and let $W^\flat$ the Weyl group of $G^\flat$ with respect to $T^\flat$. If there exists an isomorphism $\varphi:X(T)\rightarrow Y(T^\flat)$ which takes simple roots (with respect to $B$) to simple coroots (with respect to $B^\flat$) and which is compatible with the action of the Galois group ${\rm Gal}(\overline{\F}_q/\F_q)$, then we say that $(G^\flat,F^\flat)$ and $(G,F)$ are \emph{dual groups} \cite[Definition 5.21]{DL}. In particular, for two tori $S$ and $S^\flat$ with Frobenius $F$ and $F^\flat$, the pairs $(S,F)$ and $(S^\flat,F^\flat)$ are dual if there exists an isomorphism $X(S)\simeq Y(S^\flat)$ compatible with Galois group actions. 

The classification of connected reductive groups in terms of root data ensures the existence of a dual group $(G^\flat,F^\flat)$ for $(G,F)$ (unique up to isomorphism).  From now $(G^\flat,F^\flat)$ will denote a group in duality with $(G,F)$.  

\bigskip

As the functor $X$ is contravariant and the functor $Y$ covariant, we have a canonical anti-isomorphism $W\rightarrow W^\flat, w\mapsto w^\flat$ such that for all $\chi\in X(T)$ and $w\in W$, we have

 $$
 \varphi(w\cdot\chi)=w^\flat\cdot \varphi(\chi),
 $$
where $w\cdot\chi:=\chi\circ w$. 

It also satisfies
$$
F(w)^\flat=(F^\flat){^{-1}}(w^\flat),
$$
for all $w\in W$. 

The map $w\mapsto w^\flat$ defines a bijection $H^1(F,W)\rightarrow H^1(F^\flat,W^\flat)$ and so a bijection between the set of $G^F$-conjugacy classes of $F$-stable maximal tori of $G$ and the set of $F^\flat$-stable maximal tori of $G^\flat$.
\bigskip

Let $L$ be an $F$-stable Levi factor of some parabolic subgroup of $G$ and $S$ be an $F$-stable maximal torus of $L$ maximally split. There exists an $F^\flat$-stable Levi factor $L^\flat$ of some parabolic subgroup of $G^\flat$ together with an $F^\flat$-stable  maximal torus $S^\flat$ of $L^\flat$ maximally split such that $(L,S,F)$ is in duality with $(L^\flat,S^\flat,F^\flat)$. More precisely, if $S$ is of the form $T_w$ for some $w\in W$ then $S^\flat$ is of the form $T^\flat_{w^\flat}$. 
\bigskip

\subsection{Lusztig induction} \label{LusInd}

\bigskip

Let $L$ be an $F$-stable Levi subgroup of some parabolic subgroup $P$ of $G$ (which may not be $F$-stable) and denote by $U_P$ the unipotent radical of $P$. The variety $\mathcal{L}^{-1}(U_P)$ is equipped with an action of $G^F$ by left mutliplication and with an action of $L^F$ by right multiplication. These actions induce actions of $G^F$ and $L^F$ on the compactly supported $\ell$-adic cohomology groups $H_c^i(\mathcal{L}^{-1}(U_P),\Q)$.
  
 For any character $\pi$ of $L^F$, the $\Q$-vector space

 $$
 M_{L\subset P}^i(\pi):=H^i_c(\mathcal{L}^{-1}(U_P),\Q)\otimes_{\Q[L^F]}V_\pi
 $$
 is thus a $G^F$-module and we denote by $R_{L\subset P}^G(\pi)$ the virtual character of $G^F$ defined by
 
 $$
 R_{L\subset P}^G(\pi)(g):=\sum_i(-1)^i\Tr\left(g, M_{L\subset P}^i(\pi)\right),
 $$
 for all $g\in G$.
 
Explicitly \cite[\S 10.1]{DM} we have 
 
 \begin{equation}
 R_{L\subset P}^G(\pi)(g)=\frac{1}{|L^F|}\sum_{l\in L^F}\Tr\,\left((g,l)\,,\, H_c^*(\mathcal{L}^{-1}(U_P),\Q)\right)\, \pi(l^{-1}),
 \label{DM11.2}\end{equation}
for all $g\in G^F$, where $H_c^*(\mathcal{L}^{-1}(U_P),\Q):=\sum_i(-1)^i H_c^i(\mathcal{L}^{-1}(U_P),\Q)$.

 Recall that the operator $R_{L\subset P}^G$ does not depend on the choice of the parabolic subgroup having $L$ as a Levi factor expect in some cases when $q=2$  where it is still a conjecture. The independence is a consequence of Mackey's formula see \cite[\S 9.2]{DM}.  From now we will denote simply by $R_L^G$ this operator.
 
 We have the following basic properties \cite[Propositions 9.1.7, 9.1.8]{DM}.
 
 \begin{proposition} (i) If $L\subset M$ is an inclusion of Levi subgroups, then $R_L^G=R_M^G\circ R_L^M$.

 \noindent (ii) If $\pi$ is a character of $L^F$, then $R_L^G(\pi^\vee)=R_L^G(\pi)^\vee$.
 \end{proposition}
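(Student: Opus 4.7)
The plan for both parts is to start from the explicit formula \eqref{DM11.2} and to exploit the geometry of the Lang preimage $X_P := \mathcal{L}^{-1}(U_P)$ viewed as a $(G^F\times L^F)$-variety.

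For (i), fix parabolic subgroups $Q\subset P$ of $G$ with Levi factors $L\subset M$, and set $Q':=Q\cap M$, a parabolic subgroup of $M$ with Levi $L$ and unipotent radical $U_{Q'}$. The decomposition $U_Q = U_{Q'}\cdot U_P$ of varieties yields a morphism $X_Q\to X_P$ whose fibers are, after a canonical translation by an element of $M$, isomorphic to the Lang variety $\mathcal{L}_M^{-1}(U_{Q'})$ for $M$. A K\"unneth-type argument produces a $(G^F\times L^F)$-equivariant identification
\[
H_c^*\bigl(X_Q,\overline{\Q}_\ell\bigr)\;\cong\;H_c^*\bigl(X_P,\overline{\Q}_\ell\bigr)\otimes_{\overline{\Q}_\ell[M^F]}H_c^*\bigl(\mathcal{L}_M^{-1}(U_{Q'}),\overline{\Q}_\ell\bigr)
\]
of virtual $(G^F\times L^F)$-modules. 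Plugging both sides into \eqref{DM11.2} and simplifying via the orthogonality relations for $M^F$-characters gives $R_L^G = R_M^G\circ R_L^M$.

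For (ii), the key tool is Poincar\'e duality on the smooth variety $X_P$ of pure dimension $d = \dim U_P$. This produces a $(G^F\times L^F)$-equivariant perfect pairing between $H_c^i(X_P,\overline{\Q}_\ell)$ and $H^{2d-i}(X_P,\overline{\Q}_\ell)(d)$, from which one extracts, at the level of traces of group elements, the symmetry
\[
\Tr\bigl((g,l),H_c^*(X_P,\overline{\Q}_\ell)\bigr)=\Tr\bigl((g^{-1},l^{-1}),H_c^*(X_P,\overline{\Q}_\ell)\bigr).
\]
Substituting this into \eqref{DM11.2}, changing variables $l\mapsto l^{-1}$, and using $\pi^\vee(l^{-1})=\pi(l)$ together with the general identity $\chi^\vee(g)=\chi(g^{-1})$ then yields $R_L^G(\pi^\vee)=R_L^G(\pi)^\vee$.

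The principal technical obstacle is the fiber identification in (i): one must check that the translation identifying the fiber with $\mathcal{L}_M^{-1}(U_{Q'})$ can be made equivariant both for the left $G^F$-action and for the right $L^F$-action, and that the $M^F$-action implicit in the tensor product above is the natural one, so that the K\"unneth formula lifts to an identification of bimodules rather than merely an equality of dimensions. Beyond this step, both proofs reduce to routine bookkeeping with \eqref{DM11.2}.
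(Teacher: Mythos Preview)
The paper does not actually prove this proposition: it is stated as one of the ``basic properties'' of Lusztig induction and left without proof, the implicit reference being Digne--Michel \cite{DM}. So there is no proof in the paper to compare your plan against. Your plan is essentially the standard argument found in \cite{DM} (see Propositions~11.4 and~11.5 there), and the paper itself invokes exactly the bimodule identification you describe for part~(i) later on, in the proof of Proposition~\ref{remLus}.

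One point in your plan for~(ii) is worth tightening. Poincar\'e duality on the smooth variety $X_P$ gives the equivariant pairing you wrote, hence
\[
\Tr\bigl((g^{-1},l^{-1}),H_c^*(X_P,\overline{\Q}_\ell)\bigr)=\Tr\bigl((g,l),H^*(X_P,\overline{\Q}_\ell)\bigr),
\]
but this relates compactly supported cohomology to ordinary cohomology, not to itself. To conclude the symmetry you need in addition the fact that for a finite-order automorphism $\sigma$ of a variety $X$ one has $\Tr(\sigma,H_c^*(X,\overline{\Q}_\ell))=\Tr(\sigma,H^*(X,\overline{\Q}_\ell))$; this is a theorem of Deligne--Lusztig (see \cite[3.3]{DL} or \cite[Corollary~10.11]{DM}) and is not a formal consequence of duality. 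Once you insert this step, your argument for~(ii) goes through.
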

 
We have the following result \cite[Corollary 7.7]{DL}.

\begin{theorem}[Deligne-Lusztig]Any irreducible character  of $G^F$ appears in some virtual character $R_T^G(\theta)$ for some $F$-stable maximal torus $T$ of $G$ and some linear character $\theta:T^F\rightarrow\Q^\times$.
\label{DL}\end{theorem}

\subsection{Lusztig series}

We have the following proposition \cite[(5.21.5)]{DL}.

\begin{proposition} (i) There exists a bijective correspondence between the set of $G^F$-conjugacy classes of pairs $(T,\theta)$, where $T$ is an $F$-stable maximal torus of $G$ and $\theta\in\widehat{T^F}$, and the set of $G^\flat{^{F^\flat}}$-conjugacy classes of pairs $(T^\flat,s)$, where $T^\flat$ is an $F^\flat$-stable maximal torus  of $G^\flat$ and $s\in T^\flat{^{F^\flat}}$. 

\noindent (ii) If $(T,F)$ and $(T^\flat,F^\flat)$ are in duality, then $\widehat{T^F}\simeq T^\flat{^{F^\flat}}$.

\label{DL1}\end{proposition}

The correspondence (i) and the isomorphism (ii) of the proposition depends on the choice of the  isomorphism $\overline{\F}_q^\times\simeq (\mathbb{Q}/\Z)_{p'}$ and the embedding $\overline{\F}_q^\times\hookrightarrow\Q^\times$. Indeed, to construct the correspondence between characters and $F$-stable points of tori from an isomorphism $X(S)\simeq Y(S^\flat)$, we relate characters of $S^F$ with $X(S)$ and $F^\flat$-stable points of $S^\flat$ with $Y(S^\flat)$ as follows. The choice of an isomorphism $\overline{\F}_q^{\times}\simeq (\mathbb{Q}/\Z)_{p'}$ defines a surjective group homomorphism $Y(S)\rightarrow S^F$, $y\mapsto \Nr_{F^n/F}(y(\zeta))$ where $n$ is such that $S$ is split over $\F_{q^n}$ and $\zeta$ is the $(q^n-1)$-th root of unity corresponding to $1/(q^n-1)\in(\mathbb{Q}/\Z)_{p'}$.  The choice of the embedding $\overline{\F}_q^\times\hookrightarrow\Q^\times$ defines a surjective group homomorphism $X(S)\rightarrow \widehat{S^F}$ by restricting a character $S\rightarrow \overline{\F}_q^\times$ to $S^F$.
\bigskip

\noindent If $H$ is an $F$-stable maximal torus of $G$ and $\eta\in\widehat{H^F}$, we call $(H,\eta)$ a \emph{Deligne-Lusztig pair} of $(G,F)$ (DL pair of $G$ for short). We say that two DL pairs $(T,\theta)$ and $(T',\theta')$ of $G$ are \emph{geometrically conjugate} if there exists some positive integer $n$, some $g\in G^{F^n}$ such that 

$$
gTg^{-1}=T'\,\,\text{ and }\hspace{.5cm}\theta\circ \Nr_{F^n/F}(t)=\theta'\circ \Nr_{F^n/F}(gtg^{-1}),
$$
for all $t\in T^{F^n}$. 

We have the following proposition \cite[Proposition 5.22]{DL}.

\begin{proposition}Geometric conjugacy classes of DL pairs $(T,\theta)$ are in one-to-one correspondence with $F^\flat$-stable conjugacy classes of semi-simple elements of $G^\flat$.
\label{DL2}\end{proposition}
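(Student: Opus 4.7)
The strategy is to upgrade Proposition \ref{DL1} from the Frobenius level $F$ to all levels $F^n$, and then match geometric conjugacy on the DL side with $G^\flat$-conjugacy on the semisimple side. To a DL pair $(T,\theta)$ I attach the element $s\in T^{\flat F^\flat}\subset G^\flat$ provided by Proposition \ref{DL1}(ii), and then its $G^\flat$-conjugacy class. Since $F^\flat(s)=s$ lies in this class, the class is automatically $F^\flat$-stable, which furnishes the candidate map from DL pairs to $F^\flat$-stable semisimple classes of $G^\flat$.

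The key technical input is the compatibility of the duality of Proposition \ref{DL1}(ii) with change of level: if $\theta\in\widehat{T^F}$ corresponds to $s\in T^{\flat F^\flat}$, then the inflated character $\theta\circ\Nr_{F^n/F}\in\widehat{T^{F^n}}$ corresponds to the same element $s$, now viewed as a point of $T^{\flat F^{\flat n}}$ via the inclusion $T^{\flat F^\flat}\hookrightarrow T^{\flat F^{\flat n}}$. I would verify this by chasing through the explicit construction recalled just after Proposition \ref{DL1}: the surjections $X(T)\twoheadrightarrow \widehat{T^{F^n}}$ and $Y(T^\flat)\twoheadrightarrow T^{\flat F^{\flat n}}$ are compatible as $n$ varies, and the norm map $\Nr_{F^n/F}$ on the point side dualizes to character inflation on the other side. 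With this in hand, a geometric conjugacy between $(T,\theta)$ and $(T',\theta')$ realized by some $g\in G^{F^n}$ is exactly the condition that the DL pairs $(T,\theta\circ\Nr_{F^n/F})$ and $(T',\theta'\circ\Nr_{F^n/F})$ of the reductive group $(G,F^n)$ are $G^{F^n}$-conjugate; Proposition \ref{DL1}(i) applied to $(G,F^n)$ translates this into the $(G^\flat)^{F^{\flat n}}$-conjugacy of the pairs $(T^\flat,s)$ and $((T')^\flat,s')$, and in particular makes $s$ and $s'$ conjugate in $G^\flat$. Running the same equivalences in reverse, after choosing $n$ large enough that a prescribed conjugator $h\in G^\flat$ between $s$ and $s'$ lies in $(G^\flat)^{F^{\flat n}}$, yields both well-definedness modulo geometric conjugacy and injectivity.

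For surjectivity, start with an $F^\flat$-stable semisimple class $\Omega$, pick $s_0\in\Omega$, and write $F^\flat(s_0)=g s_0 g^{-1}$. Lang's theorem for the connected group $G^\flat$ gives $h$ with $h^{-1}F^\flat(h)=g$, so that $s:=h s_0 h^{-1}$ satisfies $F^\flat(s)=s$. Any such $F^\flat$-fixed semisimple element is contained in some $F^\flat$-stable maximal torus $T^\flat$ of $G^\flat$ (choose an $F^\flat$-stable maximal torus of the connected reductive $F^\flat$-stable group $C_{G^\flat}(s)^\circ$ via Lang--Steinberg). The duality dictionary then produces an $F$-stable maximal torus $T$ of $G$ dual to $T^\flat$, and Proposition \ref{DL1}(ii) delivers $\theta\in\widehat{T^F}$ with $(T,\theta)\mapsto\Omega$. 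The main obstacle is the level-compatibility claim of the second paragraph: that the correspondence of Proposition \ref{DL1}, stated at level $F$, coheres with its analogue at level $F^n$ via norm maps on the character side and inclusions of fixed points on the point side. The remainder of the argument is formal, but this coherence requires a careful unwinding of the explicit construction of the correspondence in terms of the fixed isomorphism $\overline{\F}_q^\times\simeq(\Q/\Z)_{p'}$ and embedding $\overline{\F}_q^\times\hookrightarrow\overline{\Q}_\ell^\times$ fixed at the beginning of the section.
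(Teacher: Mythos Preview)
The paper does not give a proof of this proposition: it is stated with a direct citation to \cite[Proposition 5.22]{DL} and is treated as a known input from Deligne--Lusztig theory, so there is no argument in the paper to compare against.

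Your proposal is correct and is essentially the standard proof one finds in \cite{DL} (or in expository sources such as \cite{DM}). The crux is exactly the level-compatibility step you isolate: that under the duality $\widehat{T^{F^n}}\simeq T^{\flat F^{\flat n}}$ the inflation $\theta\mapsto\theta\circ\Nr_{F^n/F}$ corresponds to the inclusion $T^{\flat F^\flat}\hookrightarrow T^{\flat F^{\flat n}}$. Once this is checked by unwinding the explicit surjections $X(T)\twoheadrightarrow\widehat{T^{F^n}}$ and $Y(T^\flat)\twoheadrightarrow T^{\flat F^{\flat n}}$ described after Proposition~\ref{DL1}, the remaining steps (reducing geometric conjugacy to $G^{F^n}$-conjugacy of inflated pairs, applying Proposition~\ref{DL1}(i) at level $F^n$, using $G^\flat=\bigcup_n (G^\flat)^{F^{\flat n}}$ for the converse, and Lang's theorem for surjectivity) are exactly as you outline. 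One small point worth making explicit in a written-up version: the assignment $(T,\theta)\mapsto s$ a priori depends on the choice of an $F^\flat$-stable torus $T^\flat$ dual to $(T,F)$, and one should note that different such choices produce $(G^\flat)^{F^\flat}$-conjugate elements $s$, so the resulting $G^\flat$-conjugacy class is unambiguous.
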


When the $G^F$-conjugacy class of the  DL pair $(T,\theta)$ corresponds to the $G^\flat{^{F^\flat}}$-conjugacy class of a pair $(T^\flat,s)$ (see Proposition \ref{DL1}(i))  we will write  sometimes $R_{T^\flat,s}^G$ instead of $R_T^G(\theta)$.

\begin{theorem}[Deligne-Lusztig] $R_{T^\flat,s}^G$ and $R_{T'{^\flat},s'}^G$ have no commun irreducible constituent unless $s$ and $s'$ are $G^\flat$-conjugate.
\label{DL3}\end{theorem}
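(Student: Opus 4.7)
The plan is to deduce the disjointness statement from the Mackey scalar-product formula for Deligne--Lusztig characters, combined with a descent to a large Frobenius power that converts geometric conjugacy into $G^{F^n}$-conjugacy.

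I would first invoke the Mackey formula (Deligne--Lusztig \cite{DL}, Theorem 6.8): for $F$-stable maximal tori $T,T'$ of $G$ and linear characters $\theta\in\widehat{T^F}$, $\theta'\in\widehat{T'{^F}}$, one has
$$
\left(R_T^G(\theta),R_{T'}^G(\theta')\right)_{G^F} \,=\, \frac{\sharp\{g\in G^F : gTg^{-1}=T',\ {}^g\theta=\theta'\}}{|T^F|}.
$$
In particular $R_T^G(\theta)$ and $R_{T'}^G(\theta')$ share an irreducible constituent only when the DL pairs $(T,\theta)$ and $(T',\theta')$ are $G^F$-conjugate. This condition is strictly stronger than the geometric conjugacy that, by Proposition \ref{DL2}, corresponds to $G^\flat$-conjugacy of $s$ and $s'$, so a descent step is needed.

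For each integer $n\geq 1$, view $T$ as $F^n$-stable and inflate $\theta$ to $\theta_n := \theta\circ\Nr_{F^n/F}\in\widehat{T^{F^n}}$; define $\theta'_n$ analogously. By the definition of geometric conjugacy recalled before Proposition \ref{DL2}, the pairs $(T,\theta)$ and $(T',\theta')$ are geometrically conjugate precisely when $(T,\theta_n)$ and $(T',\theta'_n)$ are $G^{F^n}$-conjugate for some (equivalently all sufficiently divisible) $n$. The Deligne--Lusztig construction is compatible with the replacement of $F$ by $F^n$: the character formula expressing $R_T^G(\theta)$ on a semi-simple element $s\in G^F\subset G^{F^n}$ in terms of $\theta$ and $F$-stable Levi data translates, after inflation along $\Nr_{F^n/F}$, into the analogous formula for the Deligne--Lusztig character attached to $(T,\theta_n)$ inside $(G,F^n)$. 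A common irreducible constituent $\chi\in\widehat{G^F}$ of $R_T^G(\theta)$ and $R_{T'}^G(\theta')$ therefore produces, by picking any irreducible constituent $\chi_n$ of the restriction $\chi|_{G^{F^n}}$, a common irreducible constituent of the two Deligne--Lusztig characters attached to $(T,\theta_n)$ and $(T',\theta'_n)$ inside $(G,F^n)$. Applying the Mackey formula of the first step at level $F^n$ forces $(T,\theta_n)$ and $(T',\theta'_n)$ to be $G^{F^n}$-conjugate, hence $(T,\theta)$ and $(T',\theta')$ to be geometrically conjugate, and Propositions \ref{DL1}(i) and \ref{DL2} translate this into $G^\flat$-conjugacy of $s$ and $s'$.

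The main obstacle is the compatibility step between the levels $F$ and $F^n$: one must verify that non-vanishing of $\left(\chi,R_T^G(\theta)\right)_{G^F}$ propagates to non-vanishing of $\left(\chi_n,R_T^G(\theta_n)\right)_{G^{F^n}}$ for some irreducible constituent $\chi_n$ of $\chi|_{G^{F^n}}$. The cleanest route is the Deligne--Lusztig character formula on semi-simple elements, which is manifestly stable under inflation by $\Nr_{F^n/F}$, combined with Frobenius reciprocity along the inclusion $G^F\hookrightarrow G^{F^n}$; the rest is bookkeeping with norm maps on tori.
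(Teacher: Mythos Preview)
The paper does not prove this theorem; it is quoted from \cite{DL} (Corollary 6.3 there). So the comparison is with Deligne--Lusztig's original argument.

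Your proof has a genuine gap at the very first step. The scalar-product formula (\cite{DL}, Theorem 6.8) says only that $R_T^G(\theta)$ and $R_{T'}^G(\theta')$ are \emph{orthogonal} as virtual characters when $(T,\theta)$ and $(T',\theta')$ are not $G^F$-conjugate. Orthogonality of virtual characters does \emph{not} imply that they share no irreducible constituent: two virtual characters such as $\chi_1-\chi_2$ and $\chi_1+\chi_2$ are orthogonal yet have the constituent $\chi_1$ in common. And this is not a hypothetical worry here: when $(T,\theta)$ and $(T',\theta')$ are geometrically conjugate but not $G^F$-conjugate, the scalar product vanishes while the two Deligne--Lusztig characters typically do share constituents (indeed, that is exactly how a single Lusztig series can contain constituents of several $R_T^G(\theta)$'s). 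So the sentence ``In particular $R_T^G(\theta)$ and $R_{T'}^G(\theta')$ share an irreducible constituent only when the DL pairs $(T,\theta)$ and $(T',\theta')$ are $G^F$-conjugate'' is false, and the descent to $F^n$ cannot repair it: the same issue persists at level $F^n$.

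The $F^n$ step has its own problems as written. Since $G^F\subset G^{F^n}$, a character $\chi$ of $G^F$ cannot be ``restricted to $G^{F^n}$''; you would need induction or Shintani descent, and the claimed compatibility between $R_T^G(\theta)$ at level $F$ and $R_T^G(\theta_n)$ at level $F^n$ is a nontrivial statement that is neither formulated precisely nor justified. The actual proof in \cite{DL} avoids all of this by working not with the virtual character but with the individual cohomology groups: Theorem 6.2 of \cite{DL} shows, via K\"unneth and an analysis of the $G^F$-orbits on $\tilde X\times\tilde X'$, that $\Hom_{G^F}\big(H^i_c(\tilde X)_\theta,\,H^{i'}_c(\tilde X')_{\theta'}\big)=0$ for all $i,i'$ whenever $(T,\theta)$ and $(T',\theta')$ are not geometrically conjugate. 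Since every irreducible constituent of $R_T^G(\theta)$ occurs in some $H^i_c(\tilde X)_\theta$, this is exactly the disjointness one needs, and it is strictly stronger than what the scalar-product formula alone can give.
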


\begin{remark}When the center of $G$ is connected, the centralizers of the semisimple elements of $G^\flat$ are all connected \cite[Remark 11.2.2(ii)]{DM} and so $R_{T^\flat,s}^G$ and $R_{T'{^\flat},s'}^G$ have no commun irreducible constituent unless $s$ and $s'$ are $G^\flat{^{F^\flat}}$-conjugate.

\label{connectedcenter}\end{remark}

A \emph{geometric Lusztig series} (or \emph{Lusztig series} for short) of $(G,F)$  associated to the geometric conjugacy class of some DL pair $(T,\theta)$ of $G$ is the set of all irreducible characters of $\widehat{G^F}$ which appear non-trivially in some $R_{T'}^G(\theta')$ where $(T',\theta')$ is geometrically conjugate to $(T,\theta)$. Thanks to Theorem \ref{DL}, any irreducible character  of $G^F$ belongs to a Lusztig series and thanks to Theorem \ref{DL3}, the Lusztig series are disjoint and so form a partition of $\widehat{G^F}$ which is parametrized by the $F^\flat$-stable semisimple conjugacy classes of $G^\flat$. 

Let $(T,\theta)$ be a DL pair of $G$ and $s\in G^\flat{^{F^\flat}}$ be a corresponding semisimple element. We denote either by $\mathcal{E}_G(T,\theta)$ or $\mathcal{E}_G(s)$  the Lusztig series associated with the geometric conjugacy class of $(T,\theta)$. For $\pi\in\widehat{G^F}$ we will also denote by $\mathcal{E}_G(\pi)$ the Lusztig series which contains $\pi$.

We denote by ${\rm LS}(G)$ the set of Lusztig series of $(G,F)$.

\begin{remark}Given two irreducible characters $\pi$ and $\pi'$  of $G^F$, say that
$$
\pi\sim\pi'
$$
if there exists a DL pair $(T,\theta)$ such that $\pi$ and $\pi'$ both appear in $R_T^G(\theta)$. When the center of $G$ is connected, we see that the equivalence classes for the transitive closure of $\sim$ coincide with Lusztig series. Indeed, the unipotent characters form the Lusztig series which corresponds to $s=1$. On the other hand, as the trivial character of $G^F$ appears in all $R_T^G(1)$, where $T$ runs over the $F$-stable maximal tori, any two unipotent characters of $G^F$ are equivalent.  We deduce the case of arbitrary characters from the unipotent case using Lusztig's Jordan decomposition of irreducible characters \cite[Theorem 11.5.1]{DM} together with the fact that the centralizers of the semisimple elements of $G^\flat$ are all connected (since the center of $G$ is connected).

\label{ratLS}\end{remark}

\section{Preliminaries}

\subsection{Gamma functions on finite groups}\label{defgamma}

Fix a finite group $G$ and denote by $\widehat{G}$ the set of irreducible $\Q$-characters of $G$ and by $\calC(G)$ the $\Q$-vector space of all functions $G\rightarrow\Q$.

The action of $G$ on itself by right and left multiplication makes $\calC(G)$ into a $G\times G$-module by

$$
\left((g,h)\cdot f\right) (x)=f(g^{-1}xh)
$$
for all $f\in\calC(G)$, $g,h,x\in G$. It decomposes into irreducible as

$$
\mathcal{C}(G)=\bigoplus_{\pi\in\widehat{G}} \calC_\pi(G),
$$
where $\mathcal{C}_\pi(G)$ is the subspace generated by $\{(g,1)\cdot\pi\,|\, g\in G\}$. In fact $\calC_\pi(G)\simeq V_\pi\boxtimes V_\pi^\vee$ where  $V_\pi$ is an irreducible representation with character $\pi$ and $V_\pi^\vee$ is the dual representation.
\bigskip

Denote by $\calC(G)^\iota$ the $G\times G$-module with underlying space $\calC(G)$ and with $G\times G$-action given by

$$
(g,h)*f=(h,g)\cdot f.
$$

If  ${\bf F}^G:\mathcal{C}(G)\rightarrow\mathcal{C}(G)^\iota$ is a morphism  of $G\times G$-modules, we have a function $\gamma^G:\widehat{G}\rightarrow\Q$ (which we call a \emph{gamma function}) such that for every $\pi\in\widehat{G}$ we have

\begin{equation}
{\bf F}^G(\pi)=\gamma^G(\pi)\, \pi^\vee.
\label{spec}\end{equation}

The function $\gamma^G$ determines completely ${\bf F}^G$.

\begin{proposition}Let  ${\bf F}^G:\mathcal{C}(G)\rightarrow \mathcal{C}(G)$ be a $\Q$-linear map. The following assertions are equivalent.

\noindent (1) ${\bf F}^G:\mathcal{C}(G)\rightarrow\mathcal{C}(G)^\iota$ is a morphism of $G\times G$-modules.

\noindent (2) The function ${\bf F}^G$ is given by a kernel, i.e. there exists a central function $\phi^G\in\mathcal{C}(G)$ such that

\begin{equation}
{\bf F}^G(f)(g)=\sum_{h\in G}\phi^G(gh)f(h),
\label{ker}\end{equation}
for all $f\in\mathcal{C}(G^F)$ and $g\in G^F$.

\label{kernel}\end{proposition}

\begin{proof} If (1) holds, the function $\phi^G$ in (2) is given by the formula

\begin{equation}
\phi^G(g)=\sum_{\pi\in\widehat{G}}\gamma^G(\pi)\pi(1)\overline{\pi(g)},
\label{inversegam}
\end{equation}
for all $g\in G$.

\end{proof}

\begin{remark}From (\ref{spec}) and (\ref{ker}) we see that

\begin{equation}
\gamma^G(\pi)=\pi(1)^{-1}\sum_{g\in G}\phi^G(g)\pi(g^{-1}),
\label{inversephi}\end{equation}
for all $\pi\in\widehat{G}$.
\end{remark}

\noindent From the above discussion we see that it is equivalent to give oneself :
\bigskip

$\bullet$ a morphism ${\bf F}^G:\calC(G)\rightarrow\calC(G)^\iota$  of $G\times G$-modules,

$\bullet$ a function $\gamma^G:\widehat{G}\rightarrow\Q$ (\text{gamma function}),

$\bullet$ a central function $\phi^G:G\rightarrow\Q$ (\text{kernel}).

\subsection{Quotient stacks over finite fields}\label{QSFF}

Assume that $Z$ is a finite set on which a finite group $H$ acts on the right. By notation abuse and when the context is clear, we denote the map $Z\rightarrow Z$, $z\mapsto z\cdot h$ by $h$. Denote by $[Z/H]$ the category of $H$-equivariant maps $H\rightarrow Z$ where $H$ acts on itself by right translation and denote by $\overline{[Z/H]}$ the set of isomorphism classes of $[Z/H]$. 
A function $[Z/H]\rightarrow\Q$ is a function on objects which is constant on isomorphism classes, i.e. it is a $\Q$-valued function on $\overline{[Z/H]}$. We will often identify the $\Q$-space $\calC([Z/H])$ of  functions $[Z/H]\rightarrow\Q$  with the space $\calC(Z)^H$ of $H$-invariant functions on $Z$. We  consider on $\calC([Z/H])$ the inner form

\begin{align*}
(f,h)_{[Z/H]}:&=\sum_{[x]\in \overline{[Z/H]}}\frac{1}{|{\rm Aut}([x])|}f([x])\overline{h([x])}\\
&=\frac{1}{|H|}\sum_{x\in Z}f(x)\overline{h(x)}
\end{align*}
for all $f,h\in\calC([Z/H])$.

If $X$ is a $\overline{\F}_q$-scheme on which a $\overline{\F}_q$-algebraic group $G$ acts on the right, we denote by $[X/G]$ the quotient stack. We assume that $X$, $G$ and the action of $G$ on $X$ are defined over $\F_q$ and we denote by  $F$ the associated geometric Frobenius on $X, G$ and $[X/G]$. Recall that $[X/G]$ is defined as the functor

$$
{\rm Sch}/_{\overline{\F}_q}\longrightarrow {\rm Groupoids}
$$
sending a $\overline{\F}_q$-scheme $S$ to the groupoid of diagrams
$$
\xymatrix{E\ar[r]\ar[d]&X\\
S&}
$$
where the vertical arrow is a $G$-torsor over $S$ and the horizontal arrow is a $G$-equivariant morphism.
\bigskip

Note that the $G$-torsors over $\F_q$ are parametrized by the set $H^1(F,G)=H^1(F,G/G^o)$ of $F$-conjugacy classes on $G$, and so the $\F_q$-points of $[X/G]$ decomposes as the disjoint union

\begin{equation}
[X/G](\F_q)=[X/G]^F\simeq \coprod_{\overline{h}\in H^1(F,G)}[X^{F\circ h}/G^{F\circ h}]
\label{decomp11}\end{equation}
where $h\in G$ denotes a representative of $\overline{h}$ and $G^{F\circ h}=\{g\in G\,|\, F(h^{-1}gh)=g\}$.

Notice that if $G$ is connected, then $H^1(F,G)$ is trivial and so $[X/G]^F=[X^F/G^F]$.

\bigskip

\begin{example}Let $T$ be the torus of diagonal matrices in $\SL_2$ and let $W$ be the Weyl group of $\SL_2$ with respect to $T$. We have $[T/W]=[\mathbb{G}_m/\mu_2]$ where $\mu_2$ acts on $\mathbb{G}_m$ as $x\mapsto x^{-1}$. There are two torsors on ${\Spec}(\F_q)$ which are ${\rm Spec}(\F_{q^2})\rightarrow{\rm Spec}(\F_q)$ (with action of $\mu_2$ on ${\rm Spec}(\F_{q^2})$ given by the Frobenius) and the trivial one ${\rm Spec}(\F_q\times\F_q)\rightarrow{\rm Spec}(\F_q)$ (where $\mu_2$ acts on ${\rm Spec}(\F_q\times\F_q)$ by exchanging the coordinates). Then

$$
[\mathbb{G}_m/\mu_2](\F_q)=[\mathbb{F}_q^\times/\mu_2]\sqcup [\{x\in\mathbb{F}_{q^2}^\times\,|\, x^q=x^{-1}\}/\mu_2].
$$
\end{example}
\bigskip

From the decomposition (\ref{decomp11}) we have

$$
\calC([X/G]^F)=\bigoplus_{\overline{h}\in H^1(F,G)}\calC([X^{F\circ h}/G^{F\circ h}]).
$$
For $f\in\calC([X/G]^F)$ and $\overline{h}\in H^1(F,G)$ denote by $f_{\overline{h}}$ the coordinate of $f$  in $\calC([X^{F\circ h}/G^{F\circ h}])$. We consider on $\calC([X/G]^F)$ the inner product

\begin{align*}
\left(a,b\right)_{[X/G]^F}:&=\sum_{[x]\in \overline{[X/G]^F}}\frac{1}{|{\rm Aut}([x])|}\,a(x)\overline{b(x)}\\
&=\sum_{\overline{h}\in H^1(F,G)}\left(a_z,b_z\right)_{[X^{F\circ h}/G^{F\circ h}]}.
\end{align*}

For a function $f\in\calC([X/G]^F)$ and $n\in\mathbb{Z}$, we let $f(n)$ be the function which takes the values $q^{-n}f(x)$ at any $x\in[X/G]^F$.

\subsection{Lusztig induction on $[T/N]$}

Let $T$ be an $F$-stable maximal torus of $G$ with normalizer $N=N_G(T)$ and Weyl group $W=N/T$. 
\bigskip

\textbf{The quotient stack $[T/N]$}
\bigskip

\noindent By  \S \ref{QSFF}  the $\F_q$-points of $[T/N]$ decomposes as the disjoint union

\begin{equation}
[T/N](\F_q)=[T/N]^F=\coprod_{\overline{w}\in H^1(F,N)}[T^{F\circ w}/N^{F\circ w}].
\label{decomp}\end{equation}
We thus have

$$
\calC([T/N]^F)=\bigoplus_{\overline{w}\in H^1(F,N)}\calC([T^{F\circ w}/N^{F\circ w}]).
$$
Since the action of $F\circ w$ on $T$ corresponds to the action of $F$ on $T_w$, we have

\begin{equation}
\calC([T/N]^F)=\bigoplus_{\overline{w}\in H^1(F,N)}\calC([T_w^F/N_w^F])
\label{T/N}\end{equation}
where $N_w=N_G(T_w)$.  The inner product on $\calC([T/N]^F)$ is then given by

\begin{align*}
\left(a,b\right)_{[T/N]^F}&=\sum_{[x]\in \overline{[T/N]^F}}\frac{1}{|{\rm Aut}([x])|}\,a(x)\overline{b(x)}\\
&=\sum_{\overline{w}\in H^1(F,N)}\frac{1}{|W_w^F|}\,\left(a_w,b_w\right)_{[T_w/T_w]^F}.
\end{align*}
for any two functions $a,b\in \calC([T/N]^F)$, where $W_w=N_w/T_w$.

\begin{remark}Notice that

$$
\calC([T/W]^F)=\calC([T/N]^F)
$$
but it is more natural to work with $N$ instead of $W$ (see \cite{LL}). Working with $W$ instead of $N$ modifies the above inner form by a factor $|T_w^F|$.
\end{remark}
\bigskip

{\bf Lusztig induction and restriction} 

\bigskip

\noindent Let $L$ be an $F$-stable Levi subgroup of some parabolic subgroup $P$ of $G$ (which may not be $F$-stable). In the following we identify $\calC([G/G]^F)$ with $\calC(G^F)^{G^F}$.
\bigskip

 The Lusztig induction $\pi\mapsto R_{L\subset P}^G(\pi)$ defined on characters extends linearly to a $\Q$-linear map $R_L^G:\mathcal{C}([L/L]^F)\rightarrow\mathcal{C}([G/G]^F)$. Explicitly
 
 \begin{equation}
 R_{L\subset P}^G(f)(g)=\frac{1}{|L^F|}\sum_{l\in L^F}\Tr\,\left((g,l)\,,\, H_c^*(\mathcal{L}^{-1}(U_P),\Q)\right)\, f(l^{-1}),
 \label{DM11.2}\end{equation}
for all $g\in G^F$ and $f\in\calC([L/L]^F)$.
\bigskip

We define the Lusztig restriction ${^*}R_L^G:\mathcal{C}([G/G]^F)\rightarrow\mathcal{C}([L/L]^F)$ by the formula \cite[\S 10.1]{DM}

$$
{^*}R_L^G(f)(l)=\frac{1}{|G^F|}\sum_{g\in G^F}\Tr\,\left((g,l)\,,\, H_c^*(\mathcal{L}^{-1}(U_P),\Q)\right)\, f(g^{-1}),
$$
for all $l\in L^F$ and $f\in\calC([G/G]^F)$.

\begin{proposition}\cite[Definition 9.1.3, Proposition 9.1.6]{DM}
The two operators ${^*}R_L^G$ and $R_L^G$ are adjoint to each other with respect to $(\,,\,)_{[G/G]^F}$ and $(\,,\,)_{[L/L]^F}$.
\end{proposition}
\bigskip

\begin{remark}If $f\in\calC([G/G]^F)$, then ${^*}R^G_L(f)$ is $N_G(L)^F$-invariant.
\label{rem-equi}\end{remark}

Define the induction

$$
\I_{[T/N]}^G:\calC([T/N]^F)\rightarrow\calC([G/G]^F),\hspace{1cm}f\mapsto\sum_{\overline{w}\in H^1(F,N)}\frac{1}{|W_w^F|}R_{T_w}^G(f_w).
$$
Notice that

$$
\I_{[T/N]}^G(f)=\frac{1}{|W|}\sum_{w\in W}R_{T_w}^G(f_w).
$$
The restriction is defined by

$$
{^*}\I^G_{[T/N]}:\calC([G/G]^F)\rightarrow\calC([T/N]^F),\hspace{1cm}h\mapsto \sum_{\overline{w}\in H^1(F,N)}{^*}R^G_{T_w}(h).
$$
From the above proposition we have the following result.

\begin{lemma}For any $f\in\calC([T/N]^F)$ and $h\in\calC([G/G]^F)$,we have

$$
\left({^*}\I_{[T/N]}^G(h),f\right)_{[T/N]^F}=\left(h,\I_{[T/N]}^G(f)\right)_{[G/G]^F}.
$$
\end{lemma}

Say that a function in $\calC([G/G]^F)$ is \emph{uniform} if it is a linear combination of Deligne-Lusztig characters $R_{T_w}^G(\theta)$ for various $w\in W$ and linear characters $\theta$ of $T_w^F$. We denote by $\calC([G/G]^F)_{\rm unif}$ the $\Q$-subspace of $\calC([G/G]^F)$ of uniform functions.

\begin{theorem}The map $\I_{[T/N]}^G$ induces an isomorphism

$$
\calC([T/N]^F)\simeq \calC([G/G]^F)_{\rm unif}
$$
with inverse given by the restriction of ${^*}\I_{[T/N]}^G$ to uniform functions.

\end{theorem}

\begin{proof}This is a consequence of Mackey's formula for Deligne-Lusztig induction.

\end{proof}

\begin{remark}If $G$ is of type $A$ with connected center, then $\calC([G/G]^F)_{\rm unif}=\calC([G/G]^F)$ and so in this case we have

$$
\calC([T/N]^F)\simeq \calC([G/G]^F)
$$
\end{remark}

\subsection{Lusztig induction and Lusztig series}

In this section we prove that Lusztig induction from a Levi induces a map between Lusztig series. More precisely we have the following result.

\begin{proposition}Let $L$ be an $F$-stable Levi factor of some parabolic subgroup of $G$ and let $\pi\in\mathcal{E}_L(T,\theta)=\mathcal{E}_L(s)$. Then any irreducible constituent of $R_L^G(\pi)$ belongs to $\mathcal{E}_G(T,\theta)=\mathcal{E}_G(s)$. Therefore the functor $R_L^G$ induces a map $\t_L^G:{\rm LS}(L)\rightarrow{\rm LS}(G)$.
\label{remLus}\end{proposition}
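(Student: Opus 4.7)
The plan is to combine the transitivity of Lusztig induction with the fact that geometric conjugacy is preserved when one enlarges the ambient group. Concretely, I want to show that every irreducible constituent of $R_L^G(\pi)$ lies in a Deligne--Lusztig virtual character $R_{T'}^G(\theta')$ for some DL pair $(T',\theta')$ of $G$ that is geometrically conjugate (in $G$) to the given pair $(T,\theta)$; Theorem~\ref{DL3} (in the strong form that disjoint Lusztig series have no common constituent) then places the constituent in $\mathcal{E}_G(T,\theta)$.

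First, since $\pi \in \mathcal{E}_L(T,\theta)$, by the definition of a Lusztig series there exist an $F$-stable maximal torus $T'$ of $L$ and a character $\theta' \in \widehat{(T')^F}$ such that $(T',\theta')$ is geometrically conjugate to $(T,\theta)$ as DL pairs of $L$, and
\[
\left(\pi, R_{T'}^L(\theta')\right)_{L^F} \neq 0.
\]
Let $\mathcal{X}$ be any irreducible constituent of $R_L^G(\pi)$. By transitivity of Lusztig induction (Proposition~(i) after formula~\eqref{DM11.2}),
\[
R_L^G\bigl(R_{T'}^L(\theta')\bigr) = R_{T'}^G(\theta'),
\]
and since $R_L^G$ is a linear operator on virtual characters, $\mathcal{X}$ is a constituent of $R_{T'}^G(\theta')$.

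Next I need to check that $(T',\theta')$, viewed now as a DL pair of $G$, is still geometrically conjugate to $(T,\theta)$ in $G$. This is essentially immediate from the definition: if $n\geq 1$ and $g\in L^{F^n}$ realize the geometric conjugacy inside $L$, that is $gTg^{-1} = T'$ and $\theta\circ\Nr_{F^n/F}(t) = \theta'\circ\Nr_{F^n/F}(gtg^{-1})$ for all $t\in T^{F^n}$, then the very same element $g \in G^{F^n}$ realizes the geometric conjugacy inside $G$, because all tori involved sit in $L\subseteq G$ and the norm maps are intrinsic to the tori. Hence $(T',\theta')$ and $(T,\theta)$ lie in the same geometric conjugacy class of DL pairs of $G$, so $\mathcal{X}\in \mathcal{E}_G(T,\theta)$ by definition.

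Finally, for the map $\t_L^G$: each Lusztig series $\mathcal{E}_L(s)\in{\rm LS}(L)$ contains at least one irreducible $\pi$, and the argument above shows that every constituent of $R_L^G(\pi)$ sits in the single series $\mathcal{E}_G(s)$, so the assignment $\mathcal{E}_L(s)\mapsto \mathcal{E}_G(s)$ is well defined; independence of the choice of $\pi\in\mathcal{E}_L(s)$ follows since all such $\pi$ yield constituents in $\mathcal{E}_G(s)$. There is no real obstacle here beyond bookkeeping; the only subtle point worth emphasizing is the preservation of geometric conjugacy under $L\hookrightarrow G$, which uses nothing more than the fact that the Lang norm on a torus depends only on the torus and its Frobenius, not on the ambient reductive group.
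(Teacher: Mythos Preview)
There is a genuine gap at the step where you conclude that $\mathcal{X}$ is a constituent of $R_{T'}^G(\theta')$. You write ``since $R_L^G$ is a linear operator on virtual characters, $\mathcal{X}$ is a constituent of $R_{T'}^G(\theta')$,'' but linearity does not give this. Write $R_{T'}^L(\theta') = c_\pi\,\pi + \sum_j c_j\,\pi_j$ with $c_\pi\neq 0$ and the $\pi_j$ distinct from $\pi$; then $R_{T'}^G(\theta') = c_\pi\,R_L^G(\pi) + \sum_j c_j\,R_L^G(\pi_j)$. Your $\mathcal{X}$ appears in $R_L^G(\pi)$ with some nonzero multiplicity, but it may equally well appear in several of the $R_L^G(\pi_j)$, and since the $c_j$ are integers of either sign the total coefficient of $\mathcal{X}$ in $R_{T'}^G(\theta')$ can vanish. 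This cancellation phenomenon is exactly the subtlety the paper's proof flags explicitly.

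The paper circumvents it by working with genuine $G^F$-modules rather than virtual characters: it uses the transitivity isomorphism at the level of the individual cohomology groups $H_c^k(\mathcal{L}^{-1}(U),\overline{\Q}_\ell)\otimes_{\overline{\Q}_\ell[T^F]}\theta$, where no cancellation can occur, to see that any constituent $\mathcal{X}$ of $R_L^G(\pi)$ already sits inside some such $H_c^k$. It then invokes \cite[Proposition~13.3]{DM}, which asserts that any irreducible constituent of an $H_c^k(\mathcal{L}^{-1}(U))_\theta$ occurs in $R_{T''}^G(\theta'')$ for some $(T'',\theta'')$ geometrically conjugate to $(T,\theta)$ --- a nontrivial external input that your argument is missing. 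Your observation that geometric $L$-conjugacy implies geometric $G$-conjugacy is correct and is part of the story, but by itself it does not close the gap.
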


\begin{proof}Let $P=LU_P$ and $B=TU$ be a parabolic subgroup and Borel subgroup of $G$ such that $B\subset P$ and consider the Borel subgroup $B_L=B\cap L=TV$ of $L$. Assume that $\pi$ is an irreducible constituent of $R_T^L(\theta)$. Then $\pi$ is an irreducible constituent of some $H_c^i(\mathcal{L}^{-1}(V),\Q)\otimes_{\Q[T^F]}\theta$. On the other hand, for all non-negative integer $k$,  we have (transitivity of Lusztig  induction)

$$
H_c^k(\mathcal{L}^{-1}(U),\Q)\otimes_{\Q[T^F]}\theta\simeq \bigoplus_{i+j=k}\left(H_c^i(\mathcal{L}^{-1}(U_P),\Q)\otimes_{\Q[L^F]}H_c ^j(\mathcal{L}^{-1}(V),\Q)\right)\otimes_{\Q[T^F]}\theta.
$$
Therefore any irreducible consituent of $R_L^G(\pi)$ appears in some $H_c^k(\mathcal{L}^{-1}(U),\Q)\otimes_{\Q[T^F]}\theta$ for some $k$. Note that a priori $\alpha$ could also appear in other cohomology groups and we could have cancellation in $R_T^G(\theta)$. The proposition 11.1.3 of \cite{DM} says that $\alpha$ appears at least in some $R_{T'}^G(\theta')$ with $(T',\theta')$ in the geometric conjugacy class of $(T,\theta)$.
\end{proof}

\subsection{Morphisms in duality and Lusztig series}

A morphism $f:T'\rightarrow T$ of tori  induces a morphism $Y(T')\rightarrow Y(T)$ between the co-character groups and so a map $X(T'{^\flat})\rightarrow X(T^\flat)$ between the character groups of the dual tori. Since the contravariant functor $X$ is fully faithful, we get a morphism $f^\flat:T^\flat\rightarrow T'{^\flat}$. If $f$ commutes with Frobenius then so does $f^\flat$.

\begin{remark}In the case where $T'$ is the maximal torus $\T_N$ of diagonal matrices of $\GL_N$, the morphism $f^\flat$ is constructed as follows. The morphism $f:T_N\rightarrow \T$ is of the form $(t_1,\dots, t_N)\mapsto \alpha_1(t_1)\cdots\alpha_N(t_N)$ for some cocharacters $\alpha_1,\dots,\alpha_N$ of $T$. Regarding now the $\alpha_i$'s as characters of $T^\flat$ via the isomorphism $X(T^\flat)\simeq Y(T)$, we obtain  $f^\flat:T^\flat\rightarrow\T_N^\flat=\T_N$, $t\mapsto (\alpha_1(t),\dots,\alpha_N(t))$.
\label{rem1}\end{remark}

More generally, consider a morphism $f:H'\rightarrow H$ of connected reductive algebraic groups defined over $\F_q$ which is \emph{normal} (i.e. the image of $H'$ is a normal subgroup of $H$).

\begin{proposition} There exists a normal  morphism $f^\flat:H^\flat\rightarrow H'{^\flat}$ defined over $\F_q$ which extends any morphism $T^\flat\rightarrow T'{^\flat}$ obtained by duality from the restriction $T'\rightarrow T$ of $f$ to maximal tori. 
\label{normal}\end{proposition}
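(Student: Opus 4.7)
The plan is to reduce the statement to the classification of connected reductive groups by based root data: interpret $f$ as inducing a morphism of root data, dualize it, and invoke the existence theorem for morphisms of reductive groups to produce $f^\flat$.

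\medskip

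First, I would factor $f = i \circ p$, where $p : H' \twoheadrightarrow H_1 := f(H')$ is surjective and $i : H_1 \hookrightarrow H$ is the inclusion of a connected normal reductive subgroup. Pick $F$-stable maximal tori $T' \subset H'$ and $T \subset H$ with $f(T') \subset T$, together with $F$-stable Borel subgroups $B' \supset T'$ and $B \supset T$ chosen compatibly. The classical analysis of normal reductive subgroups shows that $i^* : X(T) \to X(T_1)$ sends every root of $H$ either to a root of $H_1$ or to $0$, and dually $i_*$ embeds the coroots of $H_1$ into the coroots of $H$; since $p$ is surjective, $p^* : X(T_1) \hookrightarrow X(T')$ is injective and sends roots to roots, coroots to coroots. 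Combining these, $f^* : X(T) \to X(T')$ sends $\Phi(H) \cup \{0\}$ into $\Phi(H') \cup \{0\}$ and preserves the root/coroot pairings — precisely the data of a morphism of (based) root data from $(H', T', B')$ to $(H, T, B)$.

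\medskip

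Next I would dualize. Using the identifications $X(T) \simeq Y(T^\flat)$, $X(T') \simeq Y(T'^\flat)$, and the correspondence sending roots of $H$ to coroots of $H^\flat$ (and similarly for $H'$), the morphism of root data above passes to a morphism in the opposite direction: from the root datum of $(H^\flat, T^\flat, B^\flat)$ to that of $(H'^\flat, T'^\flat, B'^\flat)$. Its cocharacter-lattice component is exactly the torus morphism $T^\flat \to T'^\flat$ constructed by duality as in Remark \ref{rem1}. By the existence theorem for morphisms of reductive groups (every morphism of based root data arises from a homomorphism of pinned reductive groups, uniquely up to conjugation by the target torus), this dualized morphism is realized by some homomorphism $f^\flat : H^\flat \to H'^\flat$ over $\overline{\F}_q$ extending the prescribed map on tori.

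\medskip

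Finally, I would address $\F_q$-rationality and normality. The morphism of root data is Galois-equivariant because $f$ commutes with $F$ and the duality isomorphisms intertwine the Frobenius actions on the two sides (this is the content of the relation $F(w)^\flat = (F^\flat)^{-1}(w^\flat)$ recalled in \S\ref{DL}). Hence $f^\flat$ and $F'^\flat \circ f^\flat \circ (F^\flat)^{-1}$ induce the same map of root data, so by uniqueness they differ by conjugation by some $t \in T'^\flat$; a Lang--Steinberg argument in $T'^\flat$ lets us modify $f^\flat$ by an inner automorphism by an element of $T'^\flat$ so that it commutes with Frobenius, giving the $\F_q$-rational $f^\flat$. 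Normality is then automatic: the image root datum of $f^\flat$ corresponds under duality to a $W_H$-stable sub-root-system (namely the roots of $H'$ on which $f^*$ is nonzero), so the image of $f^\flat$ is a Weyl-invariant, hence normal, subgroup of $H'^\flat$. The main obstacle is the first paragraph: carefully checking the ``roots go to roots or zero'' property for a normal morphism, which requires combining the behaviour of the two factors $i$ and $p$ and using the structure theory of normal reductive subgroups. Once this is in hand, the rest is a straightforward invocation of the classification theorem together with Lang--Steinberg.
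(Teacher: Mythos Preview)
Your approach is genuinely different from the paper's and is conceptually attractive, but it leans on a theorem that is not as standard as you suggest.

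The paper does not attempt to package $f$ as a single ``morphism of root data'' and then invoke a general existence theorem. Instead it factors $f$ into a surjection and an inclusion of a closed connected normal subgroup and treats each case by hand, using only the isogeny theorem. For the inclusion $N\hookrightarrow H$, it writes $H$ as an almost-direct product $S\cdot N$, dualizes the isogeny $S\times N\to H$ to get $H^\flat\to S^\flat\times N^\flat$, and projects. For a surjection with connected kernel $S$, it first produces $i^\flat:H'^\flat\to S^\flat$ from the inclusion case, checks that $\ker(i^\flat)$ is connected by looking at the torus level, and then identifies $\ker(i^\flat)$ with $H^\flat$ via the isogeny theorem applied to an isomorphism of root data.

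Your route---show $f^*$ sends roots to roots-or-zero, dualize, and appeal to an ``existence theorem for morphisms of pinned reductive groups from morphisms of based root data''---is cleaner in spirit, but the existence theorem you cite is not the isogeny theorem and is not usually stated in this generality (allowing roots to map to zero). If you try to prove such a statement directly, you will most likely end up reproducing exactly the paper's factorization into surjective and normal-inclusion pieces and reducing to the isogeny theorem anyway. So the saving is more apparent than real unless you have a precise reference for that existence theorem in the form you need.

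On the plus side, you are more explicit than the paper about $\F_q$-rationality: the paper's proof does not spell out the Lang--Steinberg adjustment, whereas you do. Your treatment of normality of $f^\flat$ via the Weyl-stability of the image sub-root-system is also a nice observation that the paper leaves implicit.
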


\begin{proof}To see this, we are reduced to the case where $f$ is a surjective morphism or the inclusion of a closed connected normal subgroup. First of all recall that any connected reductive group $G$ is the almost-direct product  of the connected component of its center and a finite number of quasi-simple groups $G_1,\dots,G_r$, i.e. the product map $Z_G^o\times G_1\times\cdots\times G_r\rightarrow G$ is an isogeny (that is a  surjective homomorphism with finite kernel). Therefore if $N$ is a closed connected normal subgroup of $H$, then there exists a closed connected normal subgroup $S$ of $H$ such that the $H$ is the almost-direct product of $S$ and $N$. The isogeny $S\times N\rightarrow H$ induces an isogeny between the root data and so an isogeny between the dual root data. By the isogeny theorem (see for instance \cite[Theorem 23.9]{Milne}) we thus get an isogeny $H^\flat\rightarrow (S\times N)^\flat\simeq S^\flat\times N^\flat$. Composing this isogeny with the projection $S^\flat\times N^\flat\rightarrow N^\flat$ we get the required morphism $H^\flat\rightarrow N^\flat$. 

We now assume that $f$ is surjective and denote by $S$ the kernel of $f$. As $f$ factorizes through the isogeny $H'/S^o\rightarrow H'/S$, we may assume that $S$ is connected. By the above discussion, the inclusion $i: S\hookrightarrow H'$  induces a surjective morphism $i^\flat:H'{^\flat}\rightarrow S^\flat$. Also if $T'$ denotes a maximal torus of $H'$,  $T_S:=T'\cap S$ and $T:=f(T')$, then we have an exact sequence of tori $1\rightarrow T_S\rightarrow T'\rightarrow T\rightarrow 1$ and so an exact sequence
$1\rightarrow T^\flat\rightarrow T'{^\flat}\rightarrow (T_S)^\flat\rightarrow 1$. Therefore ${\rm Ker}(i^\flat)\cap T'{^\flat}\simeq T^\flat$ is connected from which we deduce that $K^\flat:={\rm Ker}(i^\flat)$ is also connected. The map $T^\flat\rightarrow T'{^\flat}$ induces an isomorphism between the root data of $H^\flat$ and $K^\flat$ and so extends to an isomorphism $H^\flat\simeq K^\flat$ by the isogeny theorem.
\end{proof}
\bigskip

Let $f^\flat:H^\flat\rightarrow H'{^\flat}$ be a morphism in duality with $f$. We denote by $f^F:H'{^F}\rightarrow H^F$ the induced group homomorphism on rational points. 

\begin{proposition}The pull back functor $(f^F)^*:{\rm Rep}(H^F)\rightarrow{\rm Rep}(H'{^F})$, $\alpha\mapsto\alpha\circ f^F$ between categories of finite dimensional representations (over $\Q$) induces a map between the sets of Lusztig series. More precisely, if $\alpha\in\mathcal{E}_H(s)$, with $s\in H^\flat{^{F^\flat}}$, then any irreducible constituent of $\alpha\circ f^F$ belongs to   $\mathcal{E}_{H'}(f^\flat(s))$.
\label{func}\end{proposition}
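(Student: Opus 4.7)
The plan is to reduce to the two elementary cases that appear in the proof of the preceding proposition, and in each case to deduce the result from a compatibility between Deligne--Lusztig characters and the morphism $f$, combined with the naturality of the duality $\widehat{T^F}\simeq T^\flat{^{F^\flat}}$ from Proposition \ref{DL1}(ii) and Remark \ref{rem1}.

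First I would factor $f:H'\to H$ as a surjection with connected kernel followed by the inclusion of a closed connected normal subgroup. Both the duality $(\,)\mapsto(\,)^\flat$ and the partition into Lusztig series being compatible with such a composition, it suffices to treat each factor separately. The common outline is as follows: by Theorem \ref{DL}, choose a DL pair $(T,\theta)$ of $H$ with $\langle \alpha, R_T^H(\theta)\rangle_{H^F}\neq 0$, geometrically conjugate to the pair attached to $s$; build from it a DL pair $(T',\theta')$ of $H'$ whose associated semisimple class is $f^\flat(s)$; and show that every irreducible constituent of $\alpha\circ f$ occurs in $R_{T'}^{H'}(\theta')$. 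The conclusion then follows from the very definition of Lusztig series.

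In the surjective case with connected kernel $S$, I would set $T':=f^{-1}(T)^\circ$, pick a Borel $B=TU$ of $H$, and set $U':=f^{-1}(U)^\circ$. The identity $f^{-1}(\calL_H^{-1}(U))=\calL_{H'}^{-1}(U')$, together with the functoriality of compactly supported $\ell$-adic cohomology, yields the character identity
\[
R_T^H(\theta)\circ f \;=\; R_{T'}^{H'}\!\left(\theta\circ f|_{(T')^F}\right),
\]
while the description of $f^\flat$ on tori given in Remark \ref{rem1} identifies $\theta\circ f|_{(T')^F}$ with the character attached to $f^\flat(s)$. Since $(H')^F\to H^F$ is surjective with kernel $S^F$ by Lang's theorem, $\alpha\circ f$ is itself irreducible, and this case is complete.

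In the inclusion case, one uses the almost-direct product decomposition $H=S\cdot H'$ with $S$ a complementary connected normal subgroup, so that any $F$-stable maximal torus $T\subset H$ is of the form $T_S\cdot T'$ with $T':=(T\cap H')^\circ$. A parallel functoriality of the Lang variety under the closed immersion produces a restriction formula of the form
\[
R_T^H(\theta)|_{(H')^F}\;=\;c\cdot R_{T'}^{H'}\!\left(\theta|_{(T')^F}\right)
\]
for an explicit constant $c$, and one concludes as before, again using Remark \ref{rem1}. The main obstacle is precisely this restriction formula: the almost-direct product $S\cdot H'$ is only a direct product after passing to a finite central quotient, so the cohomology of the Lang variety and the inner products of characters must be tracked carefully across this finite quotient.
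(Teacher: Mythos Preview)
Your reduction to the two elementary cases is reasonable, but the surjective step contains a genuine gap. When $f:H'\twoheadrightarrow H$ has connected kernel $S$, the group $S$ is a connected normal reductive subgroup of $H'$, \emph{not necessarily central}. In that situation your definitions $T':=f^{-1}(T)^\circ$ and $U':=f^{-1}(U)^\circ$ do not give a maximal torus and the unipotent radical of a Borel of $H'$: the preimage $f^{-1}(T)$ contains all of $S$. Consequently $\calL_{H'}^{-1}(U')$ is not a Deligne--Lusztig variety, and the identity
\[
R_T^H(\theta)\circ f \;=\; R_{T'}^{H'}\!\left(\theta\circ f|_{(T')^F}\right)
\]
fails. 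A concrete counterexample: take $H'=\GL_2\times\GL_2$, $H=\GL_2$, $f$ the first projection, $T=\T_2$. Then $f^{-1}(T)=\T_2\times\GL_2$ is not a torus; and if instead one chooses the genuine maximal torus $T'=\T_2\times\T_2$ with $\theta'=\theta\boxtimes 1$, one finds
\[
R_{T'}^{H'}(\theta') \;=\; R_{\T_2}^{\GL_2}(\theta)\boxtimes\bigl(1+\mathrm{St}\bigr),
\]
whereas $R_T^H(\theta)\circ f=R_{\T_2}^{\GL_2}(\theta)\boxtimes 1$. These are not equal, and the irreducible constituents of the pullback $\alpha\circ f$ need not appear in a single Deligne--Lusztig character of $H'$. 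Your identity is only valid when the kernel is \emph{central} (this is the classical compatibility, e.g.\ for quotients by central tori), and your factorisation does not guarantee that.

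The paper's argument circumvents this by a different reduction: using the almost-direct product decomposition of a connected reductive group into its connected centre and quasi-simple factors, one reduces to the case where $f$ is a \emph{central isogeny}. The point is that for genuine direct products the statement is immediate (everything decomposes as external tensor products), and the passage from almost-direct to direct products is exactly a central isogeny. For a central isogeny one can compare the Lang varieties directly: the fibre product gives a finite surjection
\[
\coprod_{z\in\ker f}\calL_{H'}^{-1}(zU')\;\longrightarrow\;\calL_H^{-1}(U),
\]
and, writing each $z\in\ker f\subset T'$ as $t'F(t')^{-1}$, one identifies each piece $\calL_{H'}^{-1}(zU')$ equivariantly with $\calL_{H'}^{-1}(U')$. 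This yields an inclusion of $H'^F\times T'^F$-modules on cohomology, from which one reads off that if $\alpha$ occurs in $H_c^i(\calL_H^{-1}(U))_\theta$ then $f^*\alpha$ occurs in $H_c^i(\calL_{H'}^{-1}(U'))_{f^*\theta}$. Your inclusion case, with the almost-direct product obstacle you flag, is precisely what this central-isogeny argument is designed to handle; it is not a separate case but the crux of the whole proof.
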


\begin{proof}The statement is clear if both $H'$ and $H$ are direct products of a torus by a quasi-simple group. Since both $H'$ and $H$ are such direct products up to central isogeny we are reduced to prove the proposition for  $f:H'\rightarrow H$ a central isogeny. Let $T'$ be an $F$-stable maximal torus of a Borel subgroup $B'=T'U'$ of $H'$ and let $B=TU$ be the image of $B'$ by $f$ with $f(T)=T'$. By base change, the map $f$ induces a finite surjective map

$$
\tilde{f}:\coprod_{z\in {\rm Ker}(f)}\mathcal{L}_{H'}^{-1}(zU')\rightarrow\mathcal{L}_H^{-1}(U).
$$
Via $f$, the groups $H'{^F}$ and $T'{^F}$ act on $\mathcal{L}_H^{-1}(U)$ and  $\tilde{f}$ is invariant under these actions. We thus get for all $i$ an inclusion $H_c^i(\mathcal{L}_H^{-1}(U),\Q)\hookrightarrow \bigoplus_{z\in{\rm Ker}(f)}H_c^i(\mathcal{L}_{H'}^{-1}(zU'),\Q)$ of $H'{^F}\times T'{^F}$-modules. Expressing any $z\in{\rm Ker}(f)$ in the form $t'F(t'{^{-1}})$ for some $t'\in T'$ yields an $G'{^F}\times T'{^F}$-equivariant  isomorphism $\mathcal{L}_{H'}^{-1}(zU')\rightarrow\mathcal{L}_{H'}^{-1}(U')$. Therefore if $\theta\in \widehat{T^F}$ and $\rho$ is a representation of $H^F$ appearing in $H_c^i(\mathcal{L}_H^{-1}(U),\Q)_\theta$, the $H^F$-submodule of $H_c^i(\mathcal{L}_H^{-1}(U),\Q)$ on which $T^F$ acts by $\theta$, then $(f^F)^*(\rho)$ appears in $H_c^i(\mathcal{L}_{H'}^{-1}(U'),\Q)_{(f^F)^*(\theta)}$.
 \end{proof}

\subsection{Functoriality}\label{Functoriality}
\bigskip

\noindent Let $G$ and $G'$ be two connected reductive algebraic groups defined over $\F_q$ and, by notation abuse, use the letter $F$ for the corresponding geometric Frobenius on  $G$ and $G'$. Let $\rho^\flat:G^\flat\rightarrow G'{^\flat}$ be an algebraic morphism which commutes with Frobenius $F^\flat$. The \emph{functoriality principle} predicts a map $\t_\rho$ from certain ``packets" of irreducible representations of $G^F$ to ``packets" of irreducible representations of $G'{^F}$. 
\bigskip

\noindent The packets we consider are the Lusztig series and

$$
\t_\rho:{\rm LS}(G)\rightarrow{\rm LS}(G'),\hspace{1cm}\mathcal{E}_G(s)\mapsto\mathcal{E}_{G'}(\rho^\flat(s)).
$$

\begin{remark} In the following two cases, the map $\t_\rho$ is given by  a functor :

\noindent (1) If $L^\flat$ is a Levi factor of some parabolic subgroup of $G^\flat$ and if  $\rho^\flat:L^\flat\hookrightarrow G^\flat$ is the inclusion, then $\t_\rho=\t_L^G$.

\noindent (2) If $\rho^\flat:G^\flat\rightarrow G'{^\flat}$ is normal. Then  the map $\t_\rho$ is given by $(\rho^F)^*$ where $\rho:G'\rightarrow G$ is dual to $\rho^\flat$.

\label{remfunc}\end{remark}

\bigskip

\begin{remark} We can also define $\t_\rho$ from a morphism $\rho^\flat:N_{G^\flat}(T^\flat)\rightarrow G'{^\flat}$  as  a morphism $N_{G^\flat}(T^\flat)\rightarrow G'{^\flat}$  defines a map between the sets of  $F^\flat$-stable semisimple orbits of $G^\flat$ and $G'{^\flat}$.

Indeed let $s$ and $\sigma$ be two semisimple elements of $G^\flat$ that are $G^\flat$-conjugate. Then $s$ and $\sigma$ have $G^\flat$-conjugates $\overline{s}$ and $\overline{\sigma}$ respectively in $T^\flat$. The elements $\overline{s}$ and $\overline{\sigma}$ are $N_{G^\flat}(T^\flat)$-conjugate (indeed if $g\overline{s}g^{-1}=\overline{\sigma}$ then both $T^\flat$ and $gT^\flat g^{-1}$ are maximal tori of $C_{G^\flat}(\overline{\sigma})^o$ and so are conjugate by an element of $C_{G^\flat}(\overline{\sigma})^o$). Therefore the images of $\overline{s}$ and $\overline{\sigma}$ by $\rho^\flat$ are $G'{^\flat}$-conjugate. We thus have a well-defined map  between semisimple orbits. 
\end{remark}

\bigskip

Let $T^\flat$ be an $F^\flat$-stable maximal torus of $G^\flat$ and $T'{^\flat}$ be an $F^\flat$-stable maximal torus of $G'{^\flat}$ containing $\rho^\flat(T^\flat)$. Let $L'{^\flat}$ be the $F'{^\flat}$-stable Levi subgroup $C_{G'{^\flat}}(\rho^\flat(T^\flat))$ of $G'{^\flat}$, it contains $T'{^\flat}$. The morphism $\rho^\flat:T^\flat\rightarrow L'{^\flat}$ being normal, we get a dual morphism

$$
\rho:L'\rightarrow T.
$$

 By propositions \ref{remLus} and  \ref{func} we then have the following commutative diagram
 
 \begin{equation}
\xymatrix{{\rm LS}(T)\ar[rr]^{(\rho^F)^*}\ar[d]_{\t_{T}^G}&&{\rm LS}(L')\ar[d]^{\t_{L'}^{G'}}\\
{\rm LS}(G)\ar[rr]^{\t_\rho}&&{\rm LS}(G')}
\label{rhow}\end{equation}
These commutative diagrams, where $T^\flat$ runs over $F^\flat$-maximal tori of $G^\flat$,  characterize completely $\t_\rho$. In particular this shows that $\t_\rho$ does not depend on the choice of the  isomorphism $\overline{\F}_q^\times\simeq (\mathbb{Q}/\Z)_{p'}$ and the embedding $\overline{\F}_q^\times\hookrightarrow\Q^\times$.

\section{Gamma functions on finite reductive groups}

In this section $G$ is an arbitrary connected reductive group equipped with a geometric Frobenius $F:G\rightarrow G$. We will only be interested in gamma functions which are constant on Lusztig series.  We call them \emph{admissible}. A central function is then called admissible if the corresponding gamma function is admissible. The admissible central functions  on $G^F$  is the subspace generated by the functions of the form

$$
\sum_{\pi\in\mathcal{E}}\pi(1)\,\pi,
$$
with $\mathcal{E}$ a  Lusztig series of $(G,F)$.

\subsection{Gamma functions and normal morphisms}

Let $G'$ be another connected reductive group with Frobenius $F$ and  $f:G'\rightarrow G$ be a normal morphism which commutes with Frobenius. Recall (see Proposition \ref{func}) that the pullback functor $\pi\mapsto \pi\circ f^F$ induces a map $\t_f$ from the set of Lusztig series of $(G,F)$ to the set of Lusztig series of $(G',F)$.

We have the following lemma.

\begin{lemma} Let $\gamma^{G'}:\widehat{G'{^F}}\rightarrow\Q$ be admissible.   Let $\gamma^G:\widehat{G^F}\rightarrow\Q$ with corresponding central function $\phi^G$. Then the following assertions are equivalent : 

(1 ) $\gamma^G=\gamma^{G'}\circ \t_f$,

(2) $\phi^G=(f^F)_!(\phi^{G'})$.
\bigskip

\noindent If one of these two conditions is satisfied then $\gamma^G$ is also admissible.

\label{normalgamma}\end{lemma}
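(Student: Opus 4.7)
The plan is to translate both conditions into the common language of the pairings $\langle \cdot, \overline{\pi}\rangle_{G^F}$ for $\pi \in \widehat{G^F}$, using the inversion formula (\ref{inversephi}) and the adjunction between $f^*$ and $f_!$ recorded in \S 2. From (\ref{inversephi}) together with the definition of $(\,,\,)_{G^F}$ one reads off
\[
\langle \phi^G, \overline{\pi}\rangle_{G^F} = \gamma^G(\pi)\,\pi(1),
\]
and the analogous formula for $\phi^{G'}$ and any $\sigma \in \widehat{G'{^F}}$, so that an equality in $\calC(G^F)$ becomes, after pairing, a collection of scalar equalities indexed by $\widehat{G^F}$.

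The core computation is then $\langle (f^F)_!(\phi^{G'}), \overline{\pi}\rangle_{G^F}$. Adjunction rewrites it as $\langle \phi^{G'}, \overline{\pi \circ f^F}\rangle_{G'{^F}}$. Decomposing the pulled-back character as $\pi \circ f^F = \sum_\sigma m_\sigma\, \sigma$ with $m_\sigma \in \mathbb{Z}_{\geq 0}$ and invoking Proposition \ref{func}, every $\sigma$ with $m_\sigma > 0$ lies in the single Lusztig series $\t_f(\mathcal{E}_G(\pi)) \in {\rm LS}(G')$. Pairing term-by-term with $\phi^{G'}$, using the admissibility of $\gamma^{G'}$ to pull the common value $\gamma^{G'}(\t_f(\mathcal{E}_G(\pi)))$ out of the sum, and collapsing $\sum_\sigma m_\sigma\, \sigma(1) = (\pi \circ f^F)(1) = \pi(1)$, one obtains
\[
\langle (f^F)_!(\phi^{G'}), \overline{\pi}\rangle_{G^F} = \gamma^{G'}(\t_f(\mathcal{E}_G(\pi)))\,\pi(1).
\]

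Comparing with the formula for $\langle \phi^G, \overline{\pi}\rangle_{G^F}$, the equality of pairings for every $\pi$ is equivalent to the identity $\gamma^G(\pi) = \gamma^{G'}(\t_f(\mathcal{E}_G(\pi)))$ for all $\pi$, i.e.\ to (1); the direction (2) $\Rightarrow$ (1) follows immediately. For (1) $\Rightarrow$ (2), which I view as the main obstacle, one needs to know that $(f^F)_!(\phi^{G'})$ is itself a class function on $G^F$: since $\phi^G$ is central, equality on pairings with irreducible characters only controls the class-function projection of $(f^F)_!(\phi^{G'})$. I would deduce this centrality from the normality of $f$ by the structural reduction used in the proof of Proposition \ref{func}, namely to a direct-product inclusion/projection (where $(f^F)_!(\phi^{G'})$ is manifestly a class function) and to a central isogeny, where $G^F$-conjugation on a preimage $(f^F)^{-1}(g)\subset G'{^F}$ acts by permuting a coset of $(\ker f)^F$ through an automorphism that preserves $\phi^{G'}$, since $\ker f$ is central. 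Finally, under (1) the right-hand side $\gamma^{G'}(\t_f(\mathcal{E}_G(\pi)))$ depends on $\pi$ only through $\mathcal{E}_G(\pi)$, so $\gamma^G$ is automatically admissible, giving the last assertion.
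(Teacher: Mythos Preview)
Your argument via adjunction is exactly what the paper indicates—its entire proof is the line ``Since $(f^F)^*$ and $(f^F)_!$ are adjunct with respect to $\langle\,,\,\rangle$ we have the following lemma''—and your use of Proposition~\ref{func} together with the dimension identity $\sum_\sigma m_\sigma\,\sigma(1)=(\pi\circ f^F)(1)=\pi(1)$ is the correct way to unpack it. You are also right that the only nontrivial point in the direction (1)$\Rightarrow$(2) is the centrality of $(f^F)_!(\phi^{G'})$.

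There is, however, a gap in your treatment of that point at the central-isogeny step. Centrality of $\ker f$ does ensure that conjugation by a lift $h'\in G'$ of $h\in G^F$ restricts to a bijection $G'^F\to G'^F$ carrying the fibre over $g$ to the fibre over $hgh^{-1}$; but this bijection, while inner on the algebraic group $G'$, need not be inner on the \emph{finite} group $G'^F$, and hence need not preserve an arbitrary class function on $G'^F$. Concretely, for $\SL_2\hookrightarrow\GL_2$, conjugation by $\mathrm{diag}(c,1)$ with $c\in\F_q^\times$ a non-square swaps the two regular unipotent $\SL_2(\F_q)$-classes, so the extension by zero of a generic class function on $\SL_2(\F_q)$ is \emph{not} central on $\GL_2(\F_q)$. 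The repair is to invoke the admissibility of $\gamma^{G'}$, which you have not yet used at this stage: since $h'\in G'^{F^n}$ for some $n$ and $h'^{-1}F(h')\in Z(G')$, one checks that such a conjugation carries any DL pair $(T_1,\theta_1)$ to a geometrically conjugate one, hence permutes irreducible characters of $G'^F$ within each geometric Lusztig series, hence fixes the admissible function $\phi^{G'}$ by formula~(\ref{inversegam}). With this amendment your proof is complete. (In the paper's only application, Lemma~\ref{BKconjlemma}, the target is a torus $T_w$, so centrality is automatic; this is presumably why the paper does not dwell on the issue.)
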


\begin{proof}Let us assume (2). For $\pi\in\widehat{G^F}$, we have

\begin{align*}
\gamma^G(\pi)&=\pi(1)^{-1}|G^F|\left(\phi^G,\pi^\vee\right)_{[G/G]^F}\\
&=\pi(1)^{-1}|G^F|\left(f_!(\phi^{G'}),\pi^\vee\right)_{[G/G]^F}\\
&=\pi(1)^{-1}|G'{^F}|\left(\phi^{G'},f^*\pi^\vee\right)_{[G'/G']^F}\\
&=\pi(1)^{-1}|G^F|\left(\phi^{G'},\sum_\chi a_\chi\,\chi^\vee\right)_{[G'/G']^F}
\end{align*}
where the sum is over the irreducible characters of $G'{^F}$.

We thus have
$$
\gamma^G(\pi)=\pi(1)^{-1}\sum_\chi a_\chi\, \chi(1)\gamma^{G'}(\chi).
$$
Since the $\chi$ such that $a_\chi\neq 0$ belongs to the Lusztig series $\t_f(\mathcal{E}_G(\pi))$ and since $\gamma^{G'}$ is constant on Lusztig series we deduce (1).
\end{proof}

\bigskip

\subsection{Gamma functions and Lusztig induction}\label{gammaLus}

We denote by $v_G$ the dimension of the unipotent radical of a Borel subgroup of $G$. If $H$ is another connected reductive group defined over $\F_q$, we put

$$
c_{H,G}:=q^{v_H-v_G}\epsilon_H\epsilon_G,
$$
where $\epsilon_G=(-1)^{\F_q-\text{rank}(G)}$.

We will use the following relations :

$$
c_{H,G}^{-1}=c_{G,H},\hspace{1cm} c_{G_1,G_2}\cdot c_{G_2,G_3}=c_{G_1,G_3}.
$$

Notice that $v_L-v_G=-{\rm dim}\, U_P$ for any $F$-stable Levi factor of a parabolic $P$ of $G$.
\bigskip

\noindent Let $L$ be an $F$-stable Levi factor of some parabolic subgroup $P$ of $G$. Recall (see Proposition \ref{remLus}) that the Lusztig induction functor $R_L^G:\mathcal{C}([L/L]^F)\rightarrow\mathcal{C}([G/G]^F)$ induces a map $\t_L^G$ from the set of Lusztig series of $(L,F)$ to the set of Lusztig series of $(G,F)$.

\begin{lemma}Let $\gamma^G:\widehat{G^F}\rightarrow\Q$ be admissible and $\gamma^L:\widehat{L^F}\rightarrow\Q$ with corresponding central function $\phi^L$. The following assertions are equivalent.

\noindent (i) $\gamma^L=c_{L,G}\, (\gamma^G\circ\t_L^G)$.

\noindent (ii) $\phi^L={^*}R^G_L(\phi^G)$.
\bigskip

If one of the two conditions hold then $\gamma^L$ is also admissible.

\label{delres}\end{lemma}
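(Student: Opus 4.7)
The plan is to prove (ii) $\Rightarrow$ (i); the reverse implication then follows automatically. Indeed, given $\phi^G$ (equivalently $\gamma^G$), define $\tilde\phi^L:={}^*R_L^G(\phi^G)$ and let $\tilde\gamma^L$ be its associated gamma function via Proposition \ref{kernel}. If (i) holds, applying (ii) $\Rightarrow$ (i) to $\tilde\phi^L$ shows $\tilde\gamma^L=c_{L,G}(\gamma^G\circ\t_L^G)=\gamma^L$, and the bijection of Proposition \ref{kernel} then forces $\phi^L=\tilde\phi^L={}^*R_L^G(\phi^G)$. The key idea for the main implication is to compute $\gamma^L(\pi)$ via (\ref{inversephi}) and shift the pairing to $G^F$ through the adjunction between $R_L^G$ and ${}^*R_L^G$.

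Assuming $\phi^L={}^*R_L^G(\phi^G)$, I would combine (\ref{inversephi}) on $L^F$ with this adjunction to obtain
\[
\gamma^L(\pi) \;=\; \frac{|L^F|}{\pi(1)}\,(\phi^L,\overline{\pi})_{L^F} \;=\; \frac{|L^F|}{\pi(1)}\,\bigl(\phi^G,\,R_L^G(\overline{\pi})\bigr)_{G^F}.
\]
Since $\overline{\pi}=\pi^\vee$ and $R_L^G(\pi^\vee)=R_L^G(\pi)^\vee$, decomposing $R_L^G(\pi)=\sum_{\chi\in\widehat{G^F}}a_\chi\chi$ with $a_\chi\in\Z$ yields $R_L^G(\overline{\pi})=\sum_\chi a_\chi\overline{\chi}$. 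By Proposition \ref{remLus}, every $\chi$ with $a_\chi\neq 0$ lies in $\t_L^G(\mathcal{E}_L(\pi))$, so admissibility of $\gamma^G$ forces all nonzero terms to share the common value $\gamma^G(\t_L^G(\pi))$. Rewriting (\ref{inversephi}) on $G^F$ as $(\phi^G,\overline{\chi})_{G^F}=\gamma^G(\chi)\chi(1)/|G^F|$ and pulling out this common value gives
\[
\bigl(\phi^G,R_L^G(\overline{\pi})\bigr)_{G^F} \;=\; \frac{\gamma^G(\t_L^G(\pi))}{|G^F|}\sum_\chi a_\chi\chi(1) \;=\; \frac{\gamma^G(\t_L^G(\pi))\,R_L^G(\pi)(1)}{|G^F|}.
\]

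It then remains to identify the scalar $\frac{|L^F|}{|G^F|}\cdot\frac{R_L^G(\pi)(1)}{\pi(1)}$ with $c_{L,G}$. For this I would invoke the standard degree formula $R_L^G(\pi)(1)=\epsilon_L\epsilon_G\,(|G^F|_{p'}/|L^F|_{p'})\,\pi(1)$ together with $|H^F|/|H^F|_{p'}=q^{\dim V_H}$ for $H\in\{G,L\}$; the factors telescope to $\epsilon_L\epsilon_G\,q^{\dim V_L-\dim V_G}=c_{L,G}$. Admissibility of $\gamma^L$ is then immediate, since $\t_L^G$ maps Lusztig series to Lusztig series, so $\gamma^G\circ\t_L^G$ is constant on each series of $L$. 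The only nontrivial input beyond adjunction and admissibility is the precise normalization in the Lusztig induction degree formula, which is entirely responsible for the appearance of $c_{L,G}$.
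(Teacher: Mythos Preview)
Your proof is correct and follows essentially the same route as the paper: compute $\gamma^L(\pi)$ via (\ref{inversephi}), pass to $G^F$ through the adjunction $({}^*R_L^G(\phi^G),\overline\pi)_{L^F}=(\phi^G,R_L^G(\overline\pi))_{G^F}$, use admissibility of $\gamma^G$ together with Proposition~\ref{remLus} to pull out the common value $\gamma^G(\t_L^G(\pi))$, and identify the remaining scalar via the degree formula $R_L^G(\pi)(1)=\epsilon_G\epsilon_L\,|G^F/L^F|_{p'}\,\pi(1)$. Your treatment of the converse via the bijection of Proposition~\ref{kernel} is in fact more explicit than the paper's, which simply asserts that the computation reverses.
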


\begin{proof} The assertion (ii) is equivalent to :

$$
\left(\phi^L,\pi^\vee\right)_{[L/L]^F}=\left({^*}R_L^G(\phi^G),\pi^\vee\right)_{[L/L]^F}=\left(\phi^G,R_L^G(\pi^\vee)\right)_{[G/G]^F},
$$
for all $\pi\in\widehat{L^F}$.

For $\pi\in\widehat{L^F}$, write 

$$
R_L^G(\pi)=\sum_{\alpha\in\widehat{G^F}}n_\alpha\, \alpha,
$$
with $n_\alpha\in\Z$. Then 

\begin{align*}
|L^F|\,\left(\phi^G,R_L^G(\pi^\vee)\right)_{[G/G]^F}&=|L^F|\, \sum_\alpha n_\alpha\,\left(\phi^G,\alpha^\vee\right)_{[G/G]^F}\\
&=\frac{\gamma^G(\alpha)\,|L^F|}{|G^F|}\, R_L^G(\pi^\vee)(1)\\
&=\frac{\gamma^G(\t_L^G(\mathcal{E}_L(\pi)))\,|L^F|}{|G^F|}\, R_L^G(\pi^\vee)(1).
\end{align*}
The second equality holds for any $\alpha$ such that $n_\alpha\neq 0$ (because $\gamma^G$ is constant on Lusztig series by assumption).

But \cite[Proposition 10.2.9]{DM}

$$
R_L^G(\pi^\vee)(1)=\epsilon_G\epsilon_L|G^F/L^F|_{p'}\,\pi(1).
$$
Hence if (ii) holds then

$$
\gamma^L(\pi)=\gamma^G(\t_L^G(\mathcal{E}_L(\pi)))\epsilon_G\epsilon_L q^{-{\rm dim}\, U_P}
$$
hence (i). 

\end{proof}
\bigskip

Assume given, for any $F$-stable Levi factor $L$ of  a parabolic subgroup of $G$,  a function $\gamma^L:\widehat{L^F}\rightarrow\Q$, with corresponding central function $\phi^L$. Denote by ${\bf F}^L:\mathcal{C}(L^F)\rightarrow \mathcal{C}(L^F)^\iota$ the corresponding  isomorphism of $L^F\times L^F$-modules and  by ${\bf F}^{[L/L]}:\calC([L/L]^F)\rightarrow\calC([L/L]^F)$ the restriction of ${\bf F}^L$  to the subspace of central functions.

Let $\mathcal{T}^G$ be the collection of gamma functions $\gamma^T$ where $T$ describes the set of $F$-stable maximal tori of $G$. We say that $\mathcal{T}^G$ is admissible  if for any two geometrically conjugate DL pairs $(T,\theta)$ and $(T',\theta')$ of $G$ we have 

$$
\gamma^T(\theta)=c_{T,T'}\, \gamma^{T'}(\theta').
$$

\begin{proposition}The following assertions are then equivalent :

\noindent (1) $\mathcal{T}^G$ is admissible and for any inclusion $M\subset L$ of  $F$-stable Levi factors  we have
\begin{equation}
{\bf F}^{[L/L]}\circ R_M^L=c_{L,M}\, R_M^L\circ{\bf F}^{[M/M]}.
\label{commuteFour}\end{equation}

\noindent (2)  $\mathcal{T}^G$ is admissible  and for any $F$-stable Levi factor $L$ (of some parabolic subgroup of $G$) and any $F$-stable maximal torus $T$ of $L$, we have

\begin{equation}
\phi^L=\frac{1}{|W_L(T)|}\sum_{w\in W_L(T)}R_{T_w}^L(\phi^{T_w}).
\label{kernelfor}\end{equation}

\noindent (3) The function $\gamma^G$ is admissible and  for any inclusion of $F$-stable Levi factors  $M\subset L$
we have  $$\gamma^M=c_{M,L}\, \,\gamma^L\circ\t_M^L.$$

\noindent (4) The function $\phi^G$ is admissible and for any inclusion of $F$-stable Levi factors $M\subset L$, we have

$$
{^*}R_M^L(\phi^L)=\phi^M.
$$
\label{equiv}\end{proposition}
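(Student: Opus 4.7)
The plan is to establish the four conditions are equivalent by proving the chain $(1)\Leftrightarrow(3)\Leftrightarrow(4)$, based on Lemma \ref{delres} and the duality $R_M^L(\pi^\vee)=R_M^L(\pi)^\vee$, and then handling $(3)\Leftrightarrow(2)$ separately. I would begin with $(3)\Leftrightarrow(4)$: applying Lemma \ref{delres} to each inclusion of $F$-stable Levi factors $M\subset L$ (with $L$ playing the role of the ambient group) yields precisely $\gamma^M=c_{M,L}(\gamma^L\circ\t_M^L)\Leftrightarrow\phi^M={}^*R_M^L(\phi^L)$, and the same lemma propagates admissibility along $\t_M^L$. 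Hence admissibility of the single function $\gamma^G$ (resp.\ $\phi^G$) forces admissibility of all the $\gamma^L$ (resp.\ $\phi^L$).

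For $(1)\Leftrightarrow(3)$, I would test (\ref{commuteFour}) on an irreducible $\pi\in\widehat{M^F}$ and write $R_M^L(\pi)=\sum_\alpha n_\alpha\alpha$. Using ${\bf F}^L(\alpha)=\gamma^L(\alpha)\alpha^\vee$ on the left and $R_M^L(\pi^\vee)=R_M^L(\pi)^\vee$ on the right, the equation becomes
\[
\sum_\alpha n_\alpha\gamma^L(\alpha)\alpha^\vee \;=\; c_{L,M}\gamma^M(\pi)\sum_\alpha n_\alpha\alpha^\vee,
\]
so by linear independence of the $\alpha^\vee$ in the support, it reduces to $\gamma^L(\alpha)=c_{L,M}\gamma^M(\pi)$ for every $\alpha$ in that support. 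By Proposition \ref{remLus} all such $\alpha$ lie in the single Lusztig series $\t_M^L(\mathcal{E}_M(\pi))$; specializing to $M=T$ a torus and invoking Theorem \ref{DL} (every irreducible character of $L^F$ occurs in some $R_T^L(\theta)$) then forces $\gamma^L$ to be constant on Lusztig series, and the displayed identity rearranges into $\gamma^M=c_{M,L}(\gamma^L\circ\t_M^L)$, which is (3). Admissibility of $\mathcal{T}^G$ emerges from the case $L=G$ together with the cocycle relation $c_{T,G}c_{G,T'}=c_{T,T'}$ applied to geometrically conjugate DL pairs, which share a common Lusztig series under $\t_T^G$. The converse is a direct recomputation.

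The remaining piece, $(3)\Leftrightarrow(2)$, is the most delicate. Assuming (3), the equivalent (4) gives $\phi^{T_w}={}^*R_{T_w}^L(\phi^L)$ for every $w\in W_L(T)$, so the right-hand side of (\ref{kernelfor}) becomes the uniform projection $\frac{1}{|W_L(T)|}\sum_w R_{T_w}^L({}^*R_{T_w}^L(\phi^L))$ of $\phi^L$; verifying (\ref{kernelfor}) thus reduces to showing that admissible central functions are uniform. But an admissible $\phi^L$ is a $\overline{\Q}_\ell$-linear combination of the ``Lusztig pseudo-coefficients'' $\pi_s:=\sum_{\pi\in\mathcal{E}_L(s)}\pi(1)\pi$, and the classical fact from Deligne--Lusztig theory that each $\pi_s$ expands as a combination of the torus-induced characters $R_{T_w,s}^L(\theta)$ delivers the required uniformity. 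Conversely, given (2), applying ${}^*R_M^L$ to both sides of (\ref{kernelfor}) and invoking the Mackey formula for Lusztig restriction along a torus allows one to reorganize the double sum as the instance of (\ref{kernelfor}) on $M$, which equals $\phi^M$; this yields $\phi^M={}^*R_M^L(\phi^L)$, i.e.\ (4), hence (3).

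The main obstacle is the admissibility-equals-uniformity step in $(3)\Leftrightarrow(2)$: everything else is formal bookkeeping with adjunction, the compatibility of Lusztig induction with duality $\pi\mapsto\pi^\vee$, and the cocycle relation for the constants $c_{H,G}$. The nontrivial external input is the structural Deligne--Lusztig result that the pseudo-coefficients $\pi_s$ are uniform functions.
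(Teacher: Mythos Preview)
Your treatment of $(3)\Leftrightarrow(4)$ and $(1)\Leftrightarrow(3)$ matches the paper's: $(3)\Leftrightarrow(4)$ is Lemma~\ref{delres}, and the paper's $(3)\Rightarrow(1)$ is exactly your ``direct recomputation'' on irreducible constituents of $R_M^L(\pi)$ using $R_M^L(\pi^\vee)=R_M^L(\pi)^\vee$.

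Where you diverge is in linking (2) to the rest. The paper proves $(1)\Rightarrow(2)$ by the single identity $\phi^L={\bf F}^L_c(1_1)$: since $1_1=\frac{1}{|W_L(T)|}\sum_w c_{T_w,L}R_{T_w}^L(1_1)$, applying ${\bf F}^L_c$ and commuting it past $R_{T_w}^L$ via (\ref{commuteFour}) gives (\ref{kernelfor}) in one line. Your route $(3)\Rightarrow(2)$ via the uniform projector is correct and close in spirit --- the uniformity of each $\pi_s$ does follow from that same identity for $1_1$ together with Theorem~\ref{DL3} (the projector $P=\frac{1}{|W_L(T)|}\sum_w R_{T_w}^L\,{}^*R_{T_w}^L$ preserves the span of each $\mathcal{E}_L(s)$, and $P(\sum_s\pi_s)=\sum_s\pi_s$ forces $P(\pi_s)=\pi_s$) --- but you should spell this derivation out rather than invoke it as an external ``classical fact''.

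The genuine gap is in your $(2)\Rightarrow(4)$. Even granting that the Mackey reorganization produces ${}^*R_M^L(\phi^L)=\phi^M$, condition (4) also asserts that $\phi^G$ is \emph{admissible}, and the Mackey formula alone does not deliver this; nor can Lemma~\ref{delres} be run in the reverse direction without admissibility of $\gamma^L$ already in hand. The paper handles $(2)\Rightarrow(3)$ by a direct computation of $\gamma^L(\pi)=|L^F|\pi(1)^{-1}(\phi^L,\pi)_{L^F}$: one expands $\phi^L$ via (\ref{kernelfor}), uses the admissibility of $\mathcal{T}^G$ to replace every $(\phi^{T_w},\alpha)_{T_w^F}$ with $|T_w^F|^{-1}c_{T_w,T}\gamma^T(\theta)$ whenever $(T_w,\alpha)$ is geometrically conjugate to a fixed DL pair $(T,\theta)$ attached to $\pi$, and then collapses the remaining sum back to $c_{L,T}(1_1,\pi)_{L^F}$ via the same $1_1$ identity. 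This yields $\gamma^L(\pi)=c_{L,T}\gamma^T(\theta)$, which simultaneously establishes admissibility of $\gamma^L$ and the relation in (3). Your Mackey argument would still need this computation to close the loop, so it does not bypass it.
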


\begin{remark}Notice that if the equivalent conditions of the above proposition are satisfied then the gamma functions $\gamma^L$ are admissible.
\label{remLusRes}\end{remark}

\begin{proof}The equivalence of (3) with (4) follows from Lemma \ref{delres}. Let us now prove that (1) implies (2). Formula (\ref{kernelfor}) follows from the two formulas

\begin{equation}
\phi^L={\bf F}^L(1_1), \hspace{1cm} 1_1^{[L/L]}=\frac{1}{|W_L(T)|}\sum_{w\in W_L(T)}c_{T_w,L}\, R_{T_w}^L(1_1^{T_w}),
\label{11}\end{equation}
where $1_1^{[L/L]}$ and $1^{T_w}_1$ are the characteristic functions of $1$.

Let us  see that (2) implies (3). We may assume without loss of generalities that $M$ is a maximal torus $T$ of $L$.  Let $\theta\in\widehat{T^F}$ and let $\pi\in\mathcal{E}_L(T,\theta)$. Assume first that $\pi\in\widehat{L^F}$ is an irreducible constituent of $R_T^L(\theta)$. By Formula (\ref{inversephi})

$$
\gamma^L(\pi)=\frac{|L^F|}{\pi(1)}\left( \phi^L,\pi\right)_{[L/L]^F}.
$$
\begin{align*}
\left(\phi^L,\pi\right)_{[L/L]^F}&=\frac{1}{|W_L(T)|}\sum_{w\in W_L(T)}\left(R_{T_w}^L(\phi^{T_w}),\pi\right)_{[L/L]^F}\\
&=\frac{1}{|W_L(T)|}\sum_{w\in W_L(T)}\sum_{\alpha\in\widehat{T_w^F}}\left( \phi^{T_w},\alpha\right)_{T_w^F}\left( R_{T_w}^L(\alpha),\pi\right)_{[L/L]^F}.
\end{align*}
Since $\mathcal{T}^G$ is admissible we see that if $\left( R_{T_w}^L(\alpha),\pi\right)_{[L/L]^F}\neq 0$, then 

$$
\left( \phi^{T_w},\alpha\right)_{[T_w/T_w]^F}=|T_w^F|^{-1}\, \gamma^{T_w}(\alpha)=|T_w^F|^{-1}\, c_{T_w,T}\,\gamma^T(\theta).
$$
Hence

\begin{align*}
\left(\phi^L,\pi\right)_{[L/L]^F}&=\gamma^T(\theta)\frac{1}{|W_L(T)|}\sum_{w\in W_L(T)}|T_w^F|^{-1}c_{T_w,T}\,\sum_{\alpha\in\widehat{T_w^F}}\left( R_{T_w}^L(\alpha),\pi\right)_{[L/L]^F}\\
&=\gamma^T(\theta)\left( \frac{1}{|W_L(T)|}\sum_{w\in W_L(T)}c_{T_w,T}\,R_{T_w}^L(1_1),\pi\right)_{[L/L]^F}
\end{align*}
Using the second formula in (\ref{11}) we get

\begin{align*}
\left(\phi^L,\pi\right)_{[L/L]^F}&=c_{L,T}\, \gamma^T(\theta)\left(1_1,\pi\right)_{[L/L]^F}\\
&=c_{L,T}\, \gamma^T(\theta)\frac{\pi(1)}{|L^F|}.
\end{align*}

If  $\pi$ does not appear in $R_T^L(\theta)$ it will appear in some $R_{T'}^L(\theta')$ with $(T',\theta')$ in the geometric $L$-conjugacy class of $(T,\theta)$. Using the above calculation with $(T',\theta')$ instead of $(T,\theta)$ together with the  admissibility of $\mathcal{T}^G$, we get the required formula.

It remains to see that (3) implies (1). Let $\alpha$ be an irreducible character of $M^F$ and let

$$
R_M^L(\alpha)=\sum_{\pi\in\widehat{L^F}}n_\pi\,\pi
$$ be the decomposition into irreducible characters, then

$$
{\bf F}^L\left(R_M^L(\alpha)\right)=\sum_\pi\gamma^L(\pi)n_\pi\,\pi^\vee.
$$
Let $(T,\theta)$ be a DL pair such that $\alpha$ is an irreducible constituent of $R_T^M(\theta)$. Then any irreducible constituent of $R_M^L(\alpha)$  lives in $\mathcal{E}_G(T,\theta)$ by Proposition \ref{remLus} and so 

$$
{\bf F}^L\left(R_M^L(\alpha)\right)=c_{L,T}\,\gamma^T(\theta)\sum_\pi n_\pi\,\pi^\vee
$$
Since $R_L^G(\alpha^\vee)=R_L^G(\alpha)^\vee$ we have

$$
{\bf F}^L\left(R_M^L(\alpha)\right)=c_{L,T}\,  \gamma^T(\theta)R_M^L(\alpha^\vee)
$$
On the other hand, ${\bf F}^M(\alpha)=\gamma^M(\alpha)\alpha^\vee$ and 

$$
\gamma^M(\alpha)=c_{M,T}\, \gamma^T(\theta).
$$
Hence

$$
R_M^L\left({\bf F}^M(\alpha)\right)=c_{M,T}\, \gamma^T(\theta) R_M^L(\alpha^\vee).
$$
\end{proof}
\bigskip

\begin{remark}(1) If $\gamma^G$ is admissible and if we put 

$$
\gamma^L:=c_{L,G}\,\gamma^G\circ\t_L^G
$$
for any $F$-stable Levi subgroup $L$, then by transitivity of the $\t_L^G$, the family $\{\gamma^L\}_L$ satisfies the assertion (3) of the above proposition.
\label{rem}\end{remark}

\begin{proposition}When the center of $G$ is connected, Proposition \ref{equiv} remains true if we drop the assertion ``$\calT^G$ is admissible" in the fist assertion.

\label{equiv'}\end{proposition}

\begin{proof}Assume that the commutation formula (\ref{commuteFour}) holds for $L=G$ and $M$ a maximal torus $T$. For any irreducible character $\pi$ of $G^F$ which appears in $R_T^G(\theta)$ it is simple to see (following the lines of the proof of Proposition \ref{equiv}) that

$$
\gamma^G(\pi)=c_{G,T}\gamma^T(\theta).
$$
Therefore, if the commutation formula is true for $L=G$ and  $M$ any maximal torus of $G$, we deduce that 
$$
\gamma^T(\theta)=c_{T,T'}\,\gamma^{T'}(\theta')
$$
whenever $(T,\theta)\sim (T',\theta')$ where $\sim$ is the transitive closure of the relation defined by $(T,\theta)\sim (T',\theta')$ if $R_T^G(\theta)$ and $R_{T'}^G(\theta')$ contain a commun irreducible constituent. 

Assume that the center of $G$ is connected. We can then prove that the geometrically conjugacy classes of the DL pairs $(T,\theta)$ coincide with the equivalence classes for the above equivalent relation $\sim$ (see Remark \ref{ratLS}). We thus conclude that the commutation formula (\ref{commuteFour}) implies that $\calT^G$ is admissible.
\end{proof}
\bigskip

Fix now an $F$-stable maximal torus $T$ of $G$ with normalizer $N$. If the equivalent conditions of Proposition \ref{equiv} are satisfied then for any $\overline{w}\in H^1(F,N)$ we have 

$$
\phi^{T_w}={^*}R^G_{T_w}(\phi^G)
$$
and so by Remark \ref{rem-equi}, $\phi^{T_w}$ is $N_w^F$-invariant. Therefore, the collection of the $\phi^{T_w}$ where $\overline{w}$ runs over $H^1(F,N)$ defines a functions 

$$
\phi^{[T/N]}=\sum_{\overline{w}\in H^1(F,N)}\phi^{T_w}
$$
in $\calC([T/N]^F)$.

We thus have a well-defined operator ${\bf F}^{[T/N]}:\calC([T/N]^F)\rightarrow\calC([T/N]^F)$ 

$$
{\bf F}^{[T/N]}(f)=\pr_{2\, !}\left(\pr_1^*(f)\otimes \overline{m}^*\phi^{[T/N]}\right),
$$
for all $f\in\calC([T/N]^F)$, where

$$
\xymatrix{&[T/N]&\\
[T/N]&[(T\times T)/N]\ar[u]^{\overline{m}}\ar[r]^-{\pr_2}\ar[l]_-{\pr_1}&[T/N]}
$$
where $N$ acts diagonally on $T\times T$, and $\overline{m}$ is the quotient of the multiplication $m:T\times T\rightarrow T$.
\bigskip

\begin{remark}We have 
\begin{equation}
{\bf F}^{[T/N]}=\bigoplus_{\overline{w}\in H^1(F,N)}{\bf F}^{T_w},
\label{Fourdecomp}\end{equation}
where ${\bf F}^{T_w}:\calC([T_w^F/N_w^F])\rightarrow\calC([T_w^F/N_w^F])$ is defined from the kernel $\phi^{T_w}$.

\end{remark}

\bigskip

Denote by $\epsilon:\calC([T/N]^F)\rightarrow\calC([T/N]^F)$ the map $(f_w)_{\overline{w}\in H^1(F,N)}\mapsto(\epsilon_G\epsilon_{T_w}f_w)_{\overline{w}\in H^1(F,N)}$. 

Recall that 

$$
\epsilon_G\epsilon_{T_w}=(-1)^{\ell(w)}
$$
where $\ell(w)$ is the length of the image of $w\in N$ in $W=N/T$.

\begin{proposition}If the equivalent conditions of Proposition \ref{equiv} are satisfied then the diagram 
\begin{equation}
\xymatrix{\calC([T/N]^F)\ar[d]_{{\bf F}^{[T/N]}}\ar[rr]^{(\I_{[T/N]}^G\circ\epsilon)(v_G)}&&\calC([G/G]^F)\ar[d]^{{\bf F}^{[G/G]}}\\
\calC([T/N]^F)\ar[rr]^{\I_{[T/N]}^G}&&\calC([G/G]^F),}
\label{comdiag1}\end{equation}
commutes, and 

\begin{equation}
\I_{[T/N]}^G(\phi^{[T/N]})=\phi^G.
\label{mainfor1}\end{equation}
\label{propmain0}\end{proposition}

\begin{proof} The diagram commutes thanks to the commutation formula (\ref{commuteFour}) and the decomposition (\ref{Fourdecomp}).

The equality (\ref{mainfor1}) follows from Formula (\ref{kernelfor}). 
\end{proof}

\begin{remark} Formula (\ref{mainfor1}) is a consequence of the commutativity of Diagram (\ref{comdiag1}). Indeed

$$
{\bf F}^{[G/G]}(1_1^{[G/G]})=\phi^G,\hspace{1cm}{\bf F}^{[T/N]}(1_1^{[T/N]})=\phi^{[T/N]}
$$
where $1_1^{[G/G]}$ denote the characteristic function of the neutral element $1$ and $1_1^{[T/N]}:=\sum_{\overline{w}\in H^1(F,N)}1^{T_w}_1$. The formula follows thus from the commutativity of Diagram (\ref{comdiag1}) and the formula (see Formula (\ref{11}))

\begin{align*}
(\I_{[T/N]}^G\circ\epsilon)(v_G)(1_1^{[T/N]})&=\frac{1}{|W|}\sum_{w\in W}\epsilon_G\epsilon_{T_w}q^{-v_G}R_{T_w}^G(1^{T_w}_1)\\
&=1_1^{[G/G]}.
\end{align*}
\label{remcom}\end{remark}

\section{Exotic Fourier operator}

\subsection{Standard Fourier operators}\label{Fourierstandard}

Let $G'$ be a connected reductive group with geometric Frobenius $F:G'\rightarrow G'$. We assume that the pair $(G',F)$ is \emph{standard}, namely that it is isomorphic to a pair of the form $((\GL_{n_1})^{m_1}\times\cdots\times(\GL_{n_r})^{m_r},F)$, with $n_1>n_2>\cdots>n_r$ and  where $F$ is the composition of the standard Frobenius with an element $\sigma$ of $S_{m_1}\times\cdots\times S_{m_r}$. In other words $(G',F)$ is isomorphic to a rational Levi factor of some parabolic subgroup  of $\GL_n$, with $n=\sum_i n_im_i$, and where  the $\F_q$-structure on $\GL_n$ is standard. In particular, if $L'$ is an $F$-stable Levi factor of some parabolic subgroup of $G'$, then the pair $(L',F)$ is also standard.

For each $i=1,\dots, r$,  let $\lambda_i=(\lambda_{i,1},\lambda_{i,2},\dots,\lambda_{i,{s_i}})$ be the partition of $m_i$ given by the decomposition of the $i$-th coordinate of $\sigma$ into disjoint cycles. We have

$$
G'{^F}\simeq\prod_{i=1}^r\prod_{j=1}^{s_i}\GL_{n_i}(\F_{q^{\lambda_{i,j}}}).
$$
We denote by $F$ the corresponding Frobenius on $\frakg':=(\gl_{n_1})^{m_1}\times\cdots\times(\gl_{n_r})^{m_r}$, then

$$
\frakg'{^F}\simeq\prod_{i=1}^r\prod_{j=1}^{s_i}\gl_{n_i}(\F_{q^{\lambda_{i,j}}}).
$$

We consider on any algebra of the form  $\gl_{s_1}\times\cdots\times\gl_{s_r}$ the trace form $\Tr(x)=\sum_{i=1}^r\Tr(x_i)$. It is compatible with restriction to Levi subalgebras and commutes with Frobenius. The algebra $\frakg'$ is therefore equipped with a trace form $\Tr$ which commutes with Frobenius and which is compatible with restriction to Levi subalgebras.
\bigskip

We fix a non-trivial additive character $\psi:\F_q\rightarrow\Q^\times$. We define the standard  Fourier transform ${\bf F}^{\frakg'}:\mathcal{C}(\frakg'{^F})\rightarrow\mathcal{C}(\frakg'{^F})$ by the formula

$$
{\bf F}^{\frakg'}(f)(x)=\sum_{y\in \frakg'{^F}}\psi(\Tr(xy))f(y),
$$
for all $f\in\mathcal{C}(\frakg'{^F})$ and $x\in \frakg'{^F}$.

Given an $F$-stable Levi subgroup $L'$ of $G'$ with corresponding Lie algebra $\frakl'$, one can extend Lusztig induction $R_{L'}^{G'}:\calC([L'/L']^F)\rightarrow\calC([G'/G']^F)$ using the embedding $j:[G'/G']\rightarrow[\frakg'/G']$ to an induction $R_{\frakl'}^{\frakg'}:\calC([\frakl'/L']^F)\rightarrow\calC([\frakg'/G']^F)$ (see \cite[Chapter 3]{letellier}) such that the Lusztig inductions commute with $j_!$ and $j^*$, i.e.

$$
R_{\frakl'}^{\frakg'}\circ j_!=j_!\circ R_{L'}^{G'},\hspace{1cm}j^*\circ R_{\frakl'}^{\frakg'}=R_{L'}^{G'}\circ j^*.
$$
We have the following commutation formula between Lusztig induction and Fourier transforms \cite[Corollary 6.2.17]{letellier}.

\begin{theorem}
$$
{\bf F}^{[\frakg'/G']}\circ R_{\frakl'}^{\frakg'}=c_{G',L'}\, R_{\frakl'}^{\frakg'}\circ{\bf F}^{[\frakl'/L']},
$$
where ${\bf F}^{\frakl'}$ is the standard Fourier transform, i.e. given by the kernel $\psi\circ\Tr$ on $\frakl'{^F}$.
\label{let}\end{theorem}
\bigskip

\begin{remark}As $(G',F)$ is a standard,  the only proper Levi subgroups of $G'$ which support a \emph{cuspidal pair} are the maximal tori, and so by \cite[Corollary 6.2.6]{letellier}, the proof of Theorem \ref{let} reduces to the case where the Levi $L'$ is a maximal torus of $G'$. Fix an $F$-stable maximal torus $T'$ of $G'$ with normalizer $N'$. For $w\in N'$ denotes by $\frakt'_w$ the Lie algebra of $T'_w$. 

As the stack $[T'/N']$ takes care of all $F$-stable maximal tori of $G'$ (up to rational conjugacy), the above theorem is equivalent to the commutativity of the following diagram

\begin{equation}
\xymatrix{\calC([\frakt'/N']^F)\ar[d]_{{\bf F}^{[\frakt'/N']}}\ar[rr]^{(\I_{[\frakt'/N']}^{\frakg'}\circ\epsilon)(v_{G'})}&&\calC([\frakg'/G']^F)\ar[d]^{{\bf F}^{[\frakg'/G']}}\\
\calC([\frakt'/N']^F)\ar[rr]^{\I_{[\frakt'/N']}^{\frakg'}}&&\calC([\frakg'/G']^F),}
\label{comdiagLie}\end{equation}
where 

$$
\I_{[\frakt'/N']}^{\frakg'}(f)=\sum_{\overline{w}\in H^1(F,N')}\frac{1}{|W_w^F|}R_{\frakt'_w}^{\frakg'}(f_w)
$$
for $f=(f_w)_{\overline{w}\in H^1(F,N')}\in\calC([\frakt'/N']^F)$, and 

$$
{\bf F}^{[\frakt'/N']}=\bigoplus_{\overline{w}\in H^1(F,N')}{\bf F}^{\frakt'_w},
$$
with ${\bf F}^{\frakt'_w}:\calC([\frakt_w'{^F}/N'_w{^F}])\rightarrow\calC([\frakt_w'{^F}/N'_w{^F}])$ defined from the kernel $\psi\circ\Tr$ on $\frakt'_w{^F}$.

\end{remark}

\bigskip

We define the standard Fourier operator ${\bf F}^{G'}:\mathcal{C}(G'{^F})\rightarrow\mathcal{C}(G'{^F})$ as 

$$
{\bf F}^{G'}:=j^*\circ{\bf F}^{\frakg'}\circ j_!,
$$
where $j:G'\hookrightarrow\frakg'$ is the inclusion. The associated kernel $\phi^{G'}$ is the function $\psi\circ\Tr|_{G'}$ on $G'{^F}$.

The following result is a consequence of Theorem \ref{let}.

\begin{corollary}For any $F$-stable Levi factor $L'$ of some parabolic subgroup of $G'$, we have

$$
{\bf F}^{[G'/G']}\circ R_{L'}^{G'}=c_{G',L'}\, R_{L'}^{G'}\circ{\bf F}^{[L'/L']}.
$$
\label{commuteres}\end{corollary}

Let $\gamma^{L'}$ be the gamma function associated to ${\bf F}^{L'}$. Since the center of $G'$ is connected, by Proposition \ref{equiv'} we have the following result.

\begin{corollary} The family $\{\gamma^{L'}\}_{L'}$ satisfies the equivalent conditions of Proposition \ref{equiv}. In particular, the functions $\gamma^{L'}$ are admissible and

\begin{equation}
{^*}R_{L'}^{G'}(\psi\circ\Tr|_{G'})={^*}R_{L'}^{G'}(\phi^{G'})=\phi^{L'}=\psi\circ\Tr|_{L'}.
\label{equares}\end{equation}

\end{corollary}

Moreover the gamma function $\gamma^{L'}$ never vanish as for any DL pair $(T',\theta')$ of $G'$, the value $\gamma^{T'}(\theta')$ is a Gauss sum and therefore is non-zero.

\begin{proposition}Let $S$ be a torus and assume given a morphism $\rho:G'\rightarrow S$ defined over $\F_q$. Then if $T'$ is an $F$-stable maximal tori of $G'$ we have

$$
(\rho^F)_!\left(\psi\circ\Tr|_{G'}\right)=c_{G',T'}\, (\rho_{T'}^F)_!\left(\psi\circ\Tr|_{T'}\right)
$$
where $\rho_{T'}:T'\rightarrow S$ is the restriction of $\rho$ to $T'$.
\label{S}\end{proposition}

\begin{proof}  Let us give here a simple proof using gamma functions and (\ref{equares}). For a more direct approach see Appendix B.

The morphism $\rho$ being normal is in duality with a morphism 

$$
\rho^\flat:S^\flat\rightarrow G'{^\flat}=G'.
$$
By Remark \ref{remfunc} we thus have the following commutative triangle

$$
\xymatrix{{\rm LS}(S)\ar[rr]^{\rho_{T'}^*}\ar[rrd]_{\rho^*}&&{\rm LS}(T')\ar[d]^{\t_{T'}^{G'}}\\
&&{\rm LS}(G')}
$$
Therefore 

$$
\gamma^S_\rho:=\gamma^{G'}\circ (\rho^F)^*=\gamma^{G'}\circ \t_{T'}^{G'}\circ (\rho_{T'}^F)^*.
$$
By (\ref{equares}) and Lemma \ref{delres}, we have $\gamma^{T'}=c_{T',G'}\,\gamma^{G'}\circ\t_{T'}^{G'}$. Therefore 

$$
\gamma^S_\rho=c_{G',T'}\,\gamma^{T'}\circ(\rho_{T'}^F)^*
$$
and so we conclude by Lemma \ref{normalgamma} as the kernel corresponding to the gamma function $\gamma^S_\rho$ is $(\rho^F)_!\phi^{G'}$ and the kernel corresponding to the gamma function $\gamma^{T'}\circ (\rho_{T'}^F)^*$ is $(\rho_{T'}^F)_!\phi^{T'}$.

\end{proof}

\subsection{Spectral definition of exotic Fourier operators}\label{Spectraldef}

Fix a standard pair $(G',F)$ as in \S \ref{Fourierstandard} and let $\rho^\flat:G^\flat\rightarrow G'{^\flat}=G'$ be defined over $\F_q$. 

Consider the gamma function on $\widehat{G^F}$

$$
\gamma^G_\rho:=c_{G,G'}\gamma^{G'}\circ\t_\rho.
$$
where $\gamma^{G'}$ is as in \S \ref{Fourierstandard} and denote by 

$$
{\bf F}^G_\rho:\calC(G^F)\rightarrow\calC(G^F)^\iota
$$
the corresponding operator that we call \emph{exotic Fourier operator}. They were first considered by Bravermann and Kazhdan \cite{BK}.

For any $F$-stable maximal torus $T$ of $G$ put

$$
\gamma^T_\rho:=c_{T,G}\,\gamma^G_\rho\circ \t_T^G.
$$
The kernel $\phi^T_\rho$ corresponding to $\gamma^T_\rho$ can be explicitly computed as follows. 

\begin{lemma}If $T'$ is an $F$-stable maximal torus of $G'$ that contains the image $\rho^\flat(T^\flat)$ then 

$$
\phi^T_\rho=c_{T,T'}\, (\rho_{T'}^F)_!(\phi^{T'})
$$
where $\rho_{T'}:T'\rightarrow T$ is dual to the restriction $T^\flat\rightarrow T'$ of $\rho^\flat$.
\label{remcru}
\end{lemma}

\begin{proof}The proof is completely similar to that of Proposition \ref{S}. We have the commutative diagram

$$
\xymatrix{{\rm LS}(T)\ar[d]_{\t_T^G}\ar[rr]^{\rho_{T'}^*}&&{\rm LS}(T')\ar[d]^{\t_{T'}^{G'}}\\
{\rm LS}(G)\ar[rr]^{\t_\rho}&&{\rm LS}(G')}
$$
from which we deduce

\begin{align*}
\gamma_\rho^T&=c_{T,G'}\,\gamma^{G'}\circ\t_\rho\circ\t_L^G\\
&=c_{T,G'}\,\gamma^{G'}\circ\t_{T'}^{G'}\circ\rho_{T'}^*\\
&=c_{T,T'}\, \gamma^{T'}\circ\rho_{T'}^*.
\end{align*}
The last equality follows from Lemma \ref{delres} as ${^*}R^{G'}_{T'}(\phi^{G'})=\phi^{T'}$ by Formula (\ref{equares}).

The lemma follows thus from Lemma \ref{normalgamma}.

\end{proof}

\begin{remark}The above lemma implies that if $T''$ is another  $F$-stable maximal torus of $G'$ containing $\rho^\flat(T^\flat)$ then 

$$
\epsilon_{T'}\,(\rho_{T'}^F)_!\left(\psi\circ\Tr|_{T'}\right)=\epsilon_{T''}\,(\rho_{T''}^F)_!\left(\psi\circ\Tr|_{T''}\right).
$$
This can be also deduced from Proposition \ref{S}. To see that we consider the Levi subgroup  $L':=C_{G'}(\rho^\flat(T^\flat))$ of $G'$. The restriction $T^\flat\rightarrow L'$ of $\rho^\flat$ is then normal and we apply the proposition to the dual morphism $\rho_{L'}:L'\rightarrow T$. Notice that

$$
\phi^T_\rho=c_{T,L'}\, (\rho_{L'}^F)_!(\psi\circ\Tr|_{L'})
$$
\end{remark}
\bigskip

We now fix a maximally split $F$-stable maximal torus $T$ of $G$ with normalizer $N$.
\bigskip

From the definition of  $\gamma^T_\rho$ and Remark \ref{rem}, the conditions of Proposition \ref{equiv} are satisfied and so by Proposition \ref{propmain0} we have the following result.

\begin{theorem}The diagram 

\begin{equation}
\xymatrix{\calC([T/N]^F)\ar[d]_{{\bf F}_\rho^{[T/N]}}\ar[rr]^{(\I_{[T/N]}^G\circ\epsilon)(v_G)}&&\calC([G/G]^F)\ar[d]^{{\bf F}_\rho^{[G/G]}}\\
\calC([T/N]^F)\ar[rr]^{\I_{[T/N]}^G}&&\calC([G/G]^F),}
\label{comdiag}\end{equation}
commutes, and 

\begin{equation}
\I_{[T/N]}^G(\phi_\rho^{[T/N]})=\phi_\rho^G.
\label{mainfor}\end{equation}
\label{propmain}\end{theorem}

\begin{remark}Notice that  in the above diagram, ${\bf F}_\rho^{[G/G]}$ is defined spectrally (using gamma functions) while the kernel of ${\bf F}^{[T/N]}_\rho$ can be explicitly defined in geometrical terms by Remark \ref{remcru}. The equality (\ref{mainfor}) gives then an explicit formula for $\phi^G_\rho$.
\end{remark}

\section{Geometric realizations}

The stacks we consider are algebraic $\overline{\F}_q$-stacks of finite type. For such a  stack $\calX$ we denote by $\mathcal{D}_c^b(\calX)$ the bounded ``derived category" of complexes of $\Q$-sheaves on $\calX$ with constructible cohomology.  

\subsection{Preliminaries}\label{Prelim}

Let $H$ is a finite group acting on the right on an $\overline{\F}_q$-scheme $X$. An $H$-equivariant complex on $X$ is a pair  $(K,\theta)$ with $K\in \dcb(X)$ and $\theta=(\theta_h)_{h\in H}$ a collection of isomorphisms

$$
\theta_h:h^*(K)\simeq K
$$
satisfying the two conditions :

$\theta_1={\rm Id}$,

$\theta_{uv}=\theta_u\circ u^*(\theta_v)$ for all $u,v\in H$.
\bigskip

We denote by $\dcb(X;H)$ the category whose objects are the  $H$-equivariant complexes and the morphisms $(K,\theta)\rightarrow (K',\theta')$ is  ${\rm Hom}(K,K')^H$.
\bigskip

If $H$ acts trivially on $X$, then an $H$-equivariant complex is a pair $(K,\theta)$ with $\theta:H\rightarrow{\rm Aut}(K)$ a group homomorphism, in which case we say that $H$ acts on $K$.
\bigskip

\begin{remark}We can extend the above definition to  $H$-equivariant complexes on a stack $\calX$.  If we do not want to bother with action of finite group on stacks, we can proceed as in \cite[\S 2.9]{LL}. In this paper we will not need to work in this more general context in a crucial way. Indeed, although we will mention $W$-equivariant perverse sheaves on the stack $[T/T]$ (for aesthetic reasons), we notice that the category of perverse sheaves on $[T/T]$ is equivalent to that of perverse sheaves on $T$.
\end{remark}
\bigskip

Recall that for a stack $\calX$, the category $\mathcal{D}_c^b(\calX)$ is a triangulated category with bounded $t$-structure is Krull-Remak-Schmidt, namely each object decomposes into a finite direct sum of indecomposable objects. Recall that in a Krull-Remak-Schmidt category,  all idempotent of endomorphism rings splits. 

In particular if $H$ is a finite group acting on $K\in\mathcal{D}_c^b(\calX)$ then we have an isomorphism

$$
K\simeq\bigoplus_{\chi\in \widehat{H}}K_\chi,
$$
where $K_\chi$ is the kernel of the idempotent $1-e_\chi\in{\rm End}(K)$ with

$$
e_\chi:=\frac{\chi(1)}{|H|}\sum_{h\in H}\overline{\chi(h)}\, \theta_h.
$$
We will denote by $K^H$ the $H$-invariant part $K_1$ of $K$.
\bigskip

If $\pi:X\rightarrow[X/H]$ is the quotient map, then for any $K\in\dcb([X/H])$, there exists a canonical $H$-equivariant structure on $\pi^*(K)$ and in this way $\pi^*$ realizes an equivalence of categories between $\dcb([X/H])$ and the category of $H$-equivariant complexes on $X$. The inverse functor $\dcb(X;H)\rightarrow\dcb([X/H])$ is $(K,\theta)\mapsto \pi_!(K,\theta)^H$.

\bigskip

Assume now that $\calX$ is a stack defined over $\F_q$. Denote by $F:\calX\rightarrow\calX$ the corresponding geometric Frobenius and denote by $\calX^F$ the groupo\"id of $\F_q$-points.  An  $F$-equivariant complex is a pair $(K,\varphi)$ with $K\in \mathcal{D}_c^b(\calX)$ and $\varphi:F^*K\simeq K$. If $(K,\varphi)$ and $(K',\varphi')$ are two $F$-equivariant complexes, the Frobenius $F$ acts on ${\rm Hom}(K,K')$ as $f\mapsto \varphi'\circ F^*( f)\circ\varphi^{-1}$. We let $\dcb(\calX;F)$ be the category whose objects are $F$-equivariant complexes and the set morphisms $(K,\varphi)\rightarrow (K',\varphi')$ is ${\rm Hom}(K,K')^F$. We denote by $\bfX:\mathcal{D}_c^b(\calX;F)\rightarrow\mathcal{C}(\calX^F)$ the map which sends $(K,\varphi)\in \mathcal{D}_c^b(\calX;F)$ to its characteristic function  $\bfX_{K,\varphi}:x\mapsto\sum_i(-1)^i\Tr(\varphi_x^i,\,\mathcal{H}_x^iK)$.

\begin{theorem}If $f:\calX\rightarrow \calY$ commutes with Frobenius endomorphisms, the following diagrams commute

$$
\xymatrix{\mathcal{D}_c^b(\calX;F)\ar[rr]^{f_!}\ar[d]_\bfX&&\mathcal{D}_c^b(\calY;F)\ar[d]^\bfX\\
\mathcal{C}(\calX^F)\ar[rr]^{(f^F)_!}&&\mathcal{C}(\calY^F)} \hspace{2cm}  \xymatrix{\mathcal{D}_c^b(\calY;F)\ar[rr]^{f^*}\ar[d]_\bfX&&\mathcal{D}_c^b(\calX;F)\ar[d]^\bfX\\
\mathcal{C}(\calY^F)\ar[rr]^{(f^F)^*}&&\mathcal{C}(\calX^F)}
$$
where $f^F:\calX^F\rightarrow \calY^F$.
\end{theorem}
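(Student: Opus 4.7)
The plan is to handle the two diagrams separately, the pullback case being essentially formal and the pushforward case being the Grothendieck--Lefschetz trace formula.

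For the right-hand diagram, I would use that the pullback of complexes is computed fiberwise on stalks: for any $x\in X$, one has a canonical isomorphism $\mathcal{H}_x^i(f^*K)\simeq \mathcal{H}_{f(x)}^i K$, and the induced Frobenius action on the left matches the Frobenius action on the right under the commutation $f\circ F=F\circ f$. Taking alternating traces stalk by stalk, one obtains
$$
\bfX_{f^*K,\,f^*\varphi}(x)=\bfX_{K,\varphi}(f(x)),
$$
for every $x\in X^F$, which is exactly $(f^F)^*\bfX_{K,\varphi}(x)$. This proves the commutativity of the second square.

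For the left-hand diagram the key ingredient is the proper base change isomorphism applied to the cartesian square with vertical arrows $f$ and horizontal arrows the inclusion of a point $y\in Y$: for each $y\in Y$ it gives a canonical, Frobenius-equivariant identification
$$
\mathcal{H}_y^i(f_! K)\;\simeq\; H_c^i\!\left(f^{-1}(y),\; K|_{f^{-1}(y)}\right).
$$
Taking the alternating trace of the Frobenius on both sides and then applying the Grothendieck--Lefschetz trace formula to the fiber $f^{-1}(y)$, one obtains for $y\in Y^F$
$$
\bfX_{f_! K,\,f_!\varphi}(y)=\sum_i(-1)^i\Tr\!\left(F,\,H_c^i(f^{-1}(y),K|_{f^{-1}(y)})\right)=\sum_{x\in f^{-1}(y)^F}\bfX_{K,\varphi}(x).
$$
Since $f$ commutes with $F$ we have $f^{-1}(y)^F=(f^F)^{-1}(y)$, and the right-hand side is exactly $(f^F)_!\bfX_{K,\varphi}(y)$, so the first square commutes.

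The only nontrivial input is the Grothendieck--Lefschetz trace formula for the fibers, combined with proper base change to reduce the stalk of $f_!K$ to a compactly supported cohomology group on which $F$ acts; both are standard results in $\ell$-adic cohomology that I would simply cite. The compatibility of the Frobenius structures on the two sides of proper base change is the only point that requires a little care, but it is built into the definition of the $F$-equivariant structure on $f_!K$ coming from $\varphi:F^*K\simeq K$ and the functoriality of $F^*f_!\simeq f_!F^*$.
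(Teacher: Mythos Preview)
Your argument is correct and is the standard proof of this statement. Note that the paper does not actually give a proof: the theorem is stated in the preliminaries as a known result and left unproved, so there is no ``paper's own proof'' to compare against. What you wrote---formal identification of stalks for $f^*$, and proper base change combined with the Grothendieck--Lefschetz trace formula for $f_!$---is exactly the argument one would cite.
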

\bigskip

Assume that $X$ is defined dover $\F_q$ with associated geometric Frobenius $F:X\rightarrow X$.  We also fix an element of ${\rm Aut}(H)$ which will be thought as a Frobenius on $H$ and which will be denoted also by $F$ (such an automorphism provides a finite \'etale group  $H_o$ over $\F_q$ such that $H=H_o\times_{\F_q}\overline{\F}_q$).

Assume now that the right action of $H$ on $X$ is compatible with Frobenius $F$, i.e., 

$$
F(x\cdot h)=F(x)\cdot F(h)
$$
for all $x\in X$ and $h\in H$.
\bigskip

We denote by $\dcb(X;H,F)$ the category whose objects are triples $(K,\theta,\varphi)$ with $K\in\dcb(X)$, $\theta$ an $H$-equivariant structure on $K$ and $\varphi:F^*K\simeq K$ such that the following diagram commutes for all $h\in H$:

\begin{equation}
\xymatrix{h^*F^*(K)\ar[d]_{h^*(\varphi)}\ar[rr]^{F^*(\theta_{F(h)})}&&F^*K\ar[d]^\varphi\\
h^*K\ar[rr]^{\theta_h}&&K}
\label{comp}\end{equation}
We see this compatibility condition as a cocycle condition  in $H\rtimes \langle F^{-1}\rangle$ which we let act on $X$ on the right by 

$$
x\cdot (hF^{-1})=F(x\cdot h)
$$
for $x\in X$ and $h\in H$. 

\noindent Indeed we have  $(hF^{-1})^*K=h^*F^*(K)$ and if we put 

(1) $\theta_{F^{-1}}:=\varphi$, 

(2) $\theta_{hF^{-1}}:=\theta_h\circ h^*(\theta_{F^{-1}}): (hF^{-1})^*K\rightarrow K$ for $h\in H$,

\noindent then, since $hF^{-1}=h\cdot F^{-1}=F^{-1}\cdot F(h)$ in $H\rtimes\langle F^{-1}\rangle$, we want to have

$$
\theta_{hF^{-1}}=\theta_{F^{-1}}\circ F^*(\theta_{F(h)}),
$$
i.e. we want the commutativity of the diagram (\ref{comp}).
\bigskip

A morphism $(K,\theta,\varphi)\rightarrow (K',\theta',\varphi')$ in $\dcb(X;H,F)$ is a morphism $f:K\rightarrow K'$ compatible with the $H$-equivariant and $F$-equivariant structures and such that the following diagram commutes for all $h\in H$

$$
\xymatrix{(hF^{-1})^*K\ar[d]_{\theta_{hF^{-1}}}\ar[rr]^{(hF^{-1})^*(f)}&&(hF^{-1})^*K'\ar[d]^{\theta'_{hF^{-1}}}\\
K\ar[rr]^f&&K'}
$$

Note that if $f:X\rightarrow Y$ is an $H$-equivariant morphism which commutes with Frobenius, then $f^*$ and $f_!$ induces functors $f_!:\dcb(X;H,F)\rightarrow\dcb(Y;H,F)$ and $f^*:\dcb(Y;H,F)\rightarrow\dcb(X;H,F)$.
\bigskip

\begin{remark}Assume that $H$ acts trivially on $X$ and let $(K,\theta,\varphi)\in\dcb(X;H,F)$. From the diagram (\ref{comp}), we have the commutative diagram

$$
\xymatrix{(F^*K)_{\chi\circ F^{-1}}\ar[rr]\ar[d]&&F^*K\ar[d]_{\varphi}\ar[rr]^{1-f_{\chi\circ F^{-1}}}&&F^*K\ar[d]^{\varphi}\\
K_\chi\ar[rr]&&K\ar[rr]^{1-e_\chi}&&K}
$$
for any $\chi\in\widehat{H}$ where 

$$
e_\chi=\frac{\chi(1)}{|H|}\sum_{h\in H}\overline{\chi(h)}\theta_h,\hspace{1cm}f_\chi=\frac{\chi(1)}{|H|}\sum_{h\in H}\overline{\chi(h)}(F^*\theta)_h.
$$
On the other hand

$$
F^*(K_\chi)\simeq (F^*K)_\chi
$$
as by definition $(F^*\theta)_h=F^*(\theta_h)$. Therefore when $\chi=\chi\circ F^{-1}$, the Weil structure $\varphi$ restricts to a Weil structure $\varphi_\chi:F^*(K_\chi)\simeq K_\chi$ and

$$
{\bf X}_{K,\varphi}=\sum_{\chi=\chi\circ F^{-1}}{\bf X}_{K_\chi,\varphi_\chi}.
$$
In fact, for all $w\in H$, we have

$$
{\bf X}_{K,\theta_w\circ\varphi}=\sum_{\chi=\chi\circ F^{-1}}\chi(w)\,{\bf X}_{K_\chi,\varphi_\chi}.
$$

When $\chi=1$, we put $(K^H,\varphi^H):=(K_1,\varphi_1)$. Then

\begin{equation}
{\bf X}_{K^H,\varphi^H}=\frac{1}{|W|}\sum_{w\in W}{\bf X}_{K,\theta_w\circ\varphi}.
\label{invariant}\end{equation}
\end{remark}
\bigskip

\begin{proposition}Consider the quotient map $\pi:X\rightarrow[X/H]$. Then $\pi^*$ realizes an equivalence of categories
$\dcb([X/H],F)\rightarrow\dcb(X;H, F)$.
\label{carquo}\end{proposition}

\begin{proof}The functor $(K,\theta,\varphi)\mapsto ((\pi_!K)^H,(\pi_!\varphi)^H)$ is the inverse functor of $\pi^*$.
\end{proof}

Finally note  that, for any $h\in H$, we have a functor

$$
\mathfrak{o}_h:\dcb(X;H,F)\rightarrow\dcb(X;F\circ h),\qquad (K,\theta,\varphi)\mapsto (K, \theta_h\circ h^*(\varphi))
$$
where $F\circ h:X\rightarrow X, x\mapsto F(x\cdot h)$.

\begin{remark}Let $(K,\theta,\varphi)\in\dcb(X;H,F)$ and let $(\overline{K},\overline{\varphi})$ be the corresponding object in $\dcb([X/H];F)$. Then, for $\overline{h}\in H^1(F,H)$, the $\overline{h}$-coordinate of ${\bf X}_{\overline{K},\overline{\varphi}}$ in the decomposition
$$
\calC([X/H]^F)=\bigoplus_{\overline{h}\in H^1(F,H)}\calC(X^{F\circ h})^{H^{F\circ h}}
$$
is the characteristic function of $\mathfrak{o}_h(K,\theta,\varphi)$.

\label{remdecomp}\end{remark}

\bigskip

We will regard the constant sheaf $\Q$ on $\calX$ as an object of $\mathcal{D}_c^b(\calX)$ concentrated in degree $0$ and if $\calX$ is irreducible we denote by $\IC_\calX$ the intersection cohomology complex on $\calX$ (i.e. the intermediate extension of the smooth $\ell$-adic sheaf $\Q$ on some smooth non-empty open substack of $\calX$).

We denote respectively by $\calM(\calX)$ (resp. $\calM(\calX;F)$, $\calM(\calX;H,F)$) the subcategory of $\dcb(\calX)$ (resp. $\dcb(\calX;F)$, $\dcb(\calX;H,F)$) of perverse sheaves on $\calX$ for the auto-dual perversity.

\subsection{Geometric induction and Deligne-Lusztig induction}\label{ind}

Let $T$ be an $F$-stable maximal torus of $G$ with normalizer $N$ and Weyl group $W=N/T$. Let $B$ be a Borel subgroup containing $T$.

Put 

$$
\car_T:=T/\!/W
$$
and consider the quotient stacks $[T/T]$, $[B/B]$ and $[G/G]$ with respect to the conjugation action. 

Denote by $\B(T)$ the classifying stack of $T$-torsors. Since $T$ acts trivially on itself, we have

$$
[T/T]\simeq T\times\B(T).
$$
From \cite[\S 2.9, \S 7.2]{LL} we can define  a pair of adjoint functors $({^*}\calI^G_{[T/N]},\calI^G_{[T/N]})$

$$
\xymatrix{\calM([T/N])\ar@/_/[rr]_-{\calI_{[T/N]}^G}&&\calM([G/G])\ar@/_/[ll]_-{{^*}\calI^G_{[T/N]}}}.
$$
Let us explain their construction. 

Consider the commutative diagram

\begin{equation}
\xymatrix{[T/T]\ar[d]_s\ar[d]&[B/B]\ar[l]_q\ar[rd]^p\ar[d]^{(q',p)}\ar[ld]_{q'}&\\
T&T\times_{\car_T}[G/G]\ar[r]_-{\pr_2}\ar[l]^-{\pr_1}&[G/G]}
\label{pInd}\end{equation}
where $s:[T/T]\rightarrow T$ is the projection.

By \cite{BY}, Lusztig functor $\Ind_{[T/T]}^G:\dcb([T/T])\rightarrow\dcb([G/G]),\, K\mapsto p_*q^!(K)$ preserves perverse sheaves. Moreover the functor $s^![{\rm dim}\, T]({\rm dim}\, T):\calM(T)\rightarrow\calM([T/T])$ is an equivalence of categories with inverse functor ${^p}\calH^0\circ \left(s_![-{\rm dim}\, T](-{\rm dim}\, T)\right)$.

From diagram (\ref{pInd}), the functor $\Ind_{[T/T]}^G:\calM([T/T])\rightarrow\calM([G/G])$ decomposes as 

$$
\Ind_{[T/T]}^G=\Ind_T^G\circ {^p}\calH^0\circ \left(s_![-{\rm dim}\, T](-{\rm dim}\, T)\right)
$$
where $\Ind_T^G:\calM(T)\rightarrow\calM([G/G]),\, K\mapsto p_*q'{^!}(K)[{\rm dim}\, T]({\rm dim}\, T)$.
\bigskip

\begin{remark}Since $(q',p)_!\Q=\IC_{T\times_{\car_T}[G/G]}$, the functor $\Ind_T^G:\calM(T)\rightarrow\calM([G/G])$ can be defined from the bottom correspondence of (\ref{pInd}) with kernel $\IC_{T\times_{\car_T}[G/G]}$ (see \cite[Lemma 2.15]{LL}), i.e.

$$
\Ind_T^G(K)=\pr_{2*}\underline{\rm Hom}\left(\IC_{T\times_{\car_T}[G/G]},\pr_1^!K\right)[{\rm dim}\, T]({\rm dim}\, T)
$$
Since $\IC_{T\times_{\car_T}[G/G]}$ is naturally $F$-equivariant and $W$-equivariant, the functor $\Ind_T^G$ preserves $F$-equivariance and for any $K\in\calM(T;W)$ we get a natural action of $W$ on $\Ind_T^G(K)$. The same is thus true for $\Ind_{[T/T]}^G$, namely we get a functor

$$
\Ind_{[T/T]}^G:\calM([T/T];W,F)\simeq\calM(T;W,F)\rightarrow\calM([G/G];W,F)
$$
where $W$ acts trivially on $[G/G]$ (I.e. $W$-equivariant sheaves on $[G/G]$ are sheaves equipped with an action of $W$).
\end{remark}

Consider the cartesian diagram

$$
\xymatrix{[T/T]\ar[rr]^s\ar[d]_{\pi_{[T/T]}}&&T\ar[d]^{\pi_T}\\
[T/N]\ar[rr]^{\overline{s}}&&[T/W]}
$$
We consider the functor $\calI_{[T/W]}^G:\calM([T/W];F)\rightarrow\calM([G/G];F)$ defined from the cohomological correspondence

\begin{equation}
\xymatrix{[T/W]&&[T/W]\times_{\car_T}[G/G]\ar[rr]^-{\pr_2}\ar[ll]_-{\pr_1}&&[G/G]}
\label{diagT/W}\end{equation}
with kernel $\IC_{[T/W]\times_{\car_T}[G/G]}$. We have \cite[Proposition 2.21]{LL}

\begin{equation}
\Ind_T^G=\calI_{[T/W]}^G\circ\pi_{T\, !}
\label{fact0}\end{equation}
and so we end up with a factorization

\begin{equation}
\Ind_{[T/T]}^G=\calI_{[T/N]}^G\circ\pi_{[T/T]\, !}
\label{fact}\end{equation}
where 

$$
\calI_{[T/N]}^G:=\calI_{[T/W]}^G\circ{^p}\calH^0\circ \left(\overline{s}_![-{\rm dim}\, T](-{\rm dim}\, T)\right).
$$

\begin{remark}While the factorization (\ref{fact0}) holds if we replace categories of perverse sheaves by derived categories, we can not expect to have (\ref{fact}) with derived categories instead of perverse sheaves because the kernel $(q,p)_!\Q$ is not $W$-equivariant on $[T/T]\times_{\car_T}[G/G]$ (see \cite[\S 5.4]{LL}).
\end{remark}

The construction of the left adjoint ${^*}\calI_{[T/N]}^G$ of $\calI_{[T/N]}^G$ is also clear from \cite[\S 2.9]{LL}.

\begin{theorem} (1) The co-unit 
$$
{^*}\calI_{[T/N]}^G\circ\calI_{[T/N]}^G\longrightarrow 1
$$
is an isomorphism.

\noindent (2) For any $K\in\calM([T/T])\simeq\calM(T)$ equipped with a $W$-equivariant structure,  

$$
\calI_{[T/N]}^G(\overline{K})=\Ind_{[T/T]}^G(K)^W
$$
where $\overline{K}$ satisfies $\pi_{[T/T]}^*(\overline{K})=K$.

\noindent (3) The functors $\calI_{[T/N]}^G$ and ${^*}\calI_{[T/N]}^G$ induces functors between categories of $F$-equivariant perverse sheaves and the following diagrams commute

$$
\xymatrix{\calM([T/N];F)\ar[rr]^{\calI_{[T/N]}^G}\ar[d]_\bfX&&\calM([G/G];F)\ar[d]^\bfX\\
\calC([T/N]^F)\ar[rr]^{\I_{[T/N]}^G}&&\calC([G/G]^F)} \hspace{2cm}  \xymatrix{\calM([G/G];F)\ar[rr]^{{^*}\calI_{[T/N]}^G}\ar[d]_\bfX&&\calM([T/N];F)\ar[d]^\bfX\\
\calC([G/G]^F)\ar[rr]^{{^*}\I_{[T/N]}^G}&&\calC([T/N]^F)}
$$
\label{Lind}\end{theorem}

\begin{proof}The first assertion is \cite[Theorem 7.8]{LL}, the second one follows from \cite[Remark 2.17]{LL}. 

Lusztig \cite{LusGreen} proved that for any smooth $\ell$-adic sheaf $K$ on $T$ equiped with a Weil structure $\varphi$, we have

\begin{equation}
{\bf X}_{\Ind_T^G(K,\varphi)}=R_T^G({\bf X}_{K,\varphi}).
\label{forI}\end{equation}
Moreover for any $(K,\varphi)\in\dcb(T;F)$ we have

$$
{\bf X}_{\Ind_T^G(K,\varphi)}(x)=\sum_{t\in T^F}{\bf X}_{K,\varphi}(t)\, N(t,x)
$$
where $N(\,,\,):T^F\times_{\car_T}[G/G]^F\rightarrow\Q$ is the characteristic function of $(q',p)_!\Q=\IC_{T\times_{\car_T}[G/G]}$.

Therefore, as the characteristic functions of $F$-equivariant smooth $\ell$-adic sheaves generate the space of all functions on $T^F$, the above formula (\ref{forI}) remains true for any $(K,\varphi)\in\dcb(T;F)$. 

Indeed, for $(K,\varphi)\in\dcb(T;F)$ write

$$
{\bf X}_{K,\varphi}=\sum_{i=1}^k\lambda_i X_{K_i,\varphi_i}
$$
where $\lambda_1,\dots,\lambda_k\in\Q$ and $(K_i,\varphi_i)$ are $F$-equivariant smooth $\ell$-adic sheaves on $T$.

Then

\begin{align*}
{\bf X}_{\Ind_T^G(K,\varphi)}(x)&=\sum_t{\bf X}_{K,\varphi}(t) N(t,x)\\
&=\sum_i\lambda_i\sum_t{\bf X}_{K_i,\varphi_i}(t) N(t,x)\\
&=\sum_i\lambda_i\,{\bf X}_{\Ind_T^G(K_i,\varphi_i)}(x)\\
&=\sum_i\lambda_i \,R_T^G({\bf X}_{K_i,\varphi_i})(x)\\
&=R_T^G\left(\sum_i\lambda_i\, {\bf X}_{K_i,\varphi_i}\right)(x)\\
&=R_T^G({\bf X}_{K,\varphi}).
\end{align*}
Recall (see Remark \ref{remdecomp}) that to $(\overline{K},\overline{\varphi})\in\calM([T/N];F)\simeq\calM([T/W];F)$ corresponds an object $(K,\theta,\varphi)\in\calM(T;W,F)$ and so  for each $w\in W$, we have an $F\circ w$-equivariant perverse sheaf $(K,\varphi_w)$ on $T$. Recall that $T_w$ is an $F$-stable maximal torus of $G$ such that the Frobenius $F$ on $T_w$  corresponds to the Frobenius $F\circ w$ on $T$. We regard $(K,\varphi_w)$ as an $F$-equivariant perverse sheaf $(K_w,\varphi_w)$ on $T_w$. By definition

$$
\I_{[T/N]}^G({\bf X}_{\overline{K},\overline{\varphi}})=\frac{1}{|W|}\sum_{w\in W}R_{T_w}^G({\bf X}_{K_w,\varphi_w}).
$$
On the other hand, let $\varphi^G$ be the $F$-equivariant structure on $\Ind_T^G(K)$ induced by $\varphi:F^*K\simeq K$ and let $\theta^G:W\rightarrow{\rm Aut}\left(\Ind_T^G(K)\right)$ be the induced action. By Formula (\ref{invariant}) we have

$$
{\bf X}_{\Ind_T^G(K,\varphi)^W}=\frac{1}{|W|}\sum_{w\in W}{\bf X}_{\Ind_T^G(K),\,\theta^G_w\circ\varphi^G}.
$$
Since
$$
{\bf X}_{\Ind_{T_w}^G(K_w,\varphi_w)}={\bf X}_{\Ind_T^G(K),\,\theta^G_w\circ\varphi^G}
$$
we get that

$$
{\bf X}_{\Ind_T^G(K,\varphi)^W}=\frac{1}{|W|}\sum_{w\in W}{\bf X}_{\Ind_{T_w}^G(K_w,\varphi_w)}
$$
and so we conclude from Formula (\ref{forI}) that

$$
\I_{[T/N]}^G({\bf X}_{\overline{K},\overline{\varphi}})={\bf X}_{\Ind_T^G(K,\varphi)^W}={\bf X}_{\calI_{[T/N]}^G(\overline{K},\overline{\varphi})}.
$$
The second equality being a consequence of the assertion (2).
\end{proof}

\subsection{Braverman-Kazhdan conjecture}\label{geointer1}

Let $T$ be a maximally split $F$-stable maximal torus of $G$ with normalizer $N$ and Weyl group $W=N/T$, $(G',F)$ a standard pair (see \S \ref{Fourierstandard}) and $\rho^\flat:G^\flat\rightarrow G'{^\flat}=G'$ a morphism that commutes with Frobenius. We let $L'$ be the centralizer of $\rho^\flat(T^\flat)$ in $G'$.

To simplify the presentation we will assume that $G'=\GL_n$, that $L'$ is of the form $(\GL_{n_1})^{a_1}\times\cdots\times (\GL_{n_r})^{a_r}$ with $n_1>\cdots>n_r$ (similar results will hold for arbitrary standard pairs $(G',F)$) and that $T'$ the maximal torus $\T_n$ of $\GL_n$ of diagonal matrices. We will use the following identifications (see \S \ref{reductive})

$$
N_{G'}(L')=L'\rtimes (S_{a_1}\times\cdots\times S_{a_r}), \hspace{.5cm} W_{G'}(L')=S_{a_1}\times\cdots\times S_{a_r}.
$$
The action of $S_{a_1}\times\cdots\times S_{a_r}$ on $L'$ preserves $T'$ and defines a natural embedding of $S_{a_1}\times\cdots\times S_{a_r}$ in $S_n=W_{G'}(T')$. The map $\rho^\flat$  induces thus a group homomorphism $W\rightarrow S_{a_1}\times\cdots\times S_{a_r}\hookrightarrow S_n$, $w\mapsto w'$ and so an action of $W$ on $T'$.
\bigskip

The morphism $\rho:T'\rightarrow T$ (obtained by duality from $\rho^\flat:T^\flat\rightarrow T'$) is $W$-equivariant.
\bigskip

Fix a non-trivial additive character $\psi$ of $\F_q$ and let $\mathcal{L}_\psi$ the Artin-Schreier sheaf on the affine line over $\overline{\F}_q$ equipped with its natural Weil structure $\varphi_\psi:F^*\mathcal{L}_\psi\simeq\mathcal{L}_\psi$ such that

$$
{\bf X}_{\mathcal{L}_\psi,\varphi_\psi^{(i)}}=\psi\circ\Tr_{\F_{q^i}/\F_q},
$$
for all positive integer $i$.

The trace $\Tr|_{T'}: T'\rightarrow\overline{\F}_q$ being $W$-invariant, the $F$-equivariant  perverse sheaf  $\Phi^{T'}:=(\Tr|_{T'})^*(\mathcal{L}_\psi,\varphi_\psi)[{\rm dim}\, T']$ on $T'$ gives a natural object $(\Phi^{T'},\theta^{T'},\varphi^{T'})$ of $\dcb(T';W,F)$.

\begin{proposition}The complex $\rho_!\Phi^{T'}$ is a perverse sheaf.
\end{proposition}

\begin{proof}As $\rho(T')$ is closed in $T$, we may assume without loss of generality that $\rho$ is surjective, i.e. that $\rho$ is the quotient map

$$
T'\rightarrow T'/S=T
$$
where $S:={\rm Ker}(\rho)$. Denote by $j:T'={\rm T}_n\hookrightarrow \mathbb{A}^n$ the natural open embedding.  The morphism $j$ is affine and quasi-finite and so $j_!$ preserves perverse sheaves.

 Let $m:\mathbb{A}^n\times\mathbb{A}^n\rightarrow\mathbb{A}^n$ be the multiplication (coordinate by coordinate) and consider the standard geometric Fourier transform

$$
\calF^{\mathbb{A}^n}:\calM(\mathbb{A}^n)\rightarrow\calM(\mathbb{A}^n),\hspace{1cm}K\mapsto \pr_{2\, !}\left(\pr_1^*(K)\otimes m^*(\Tr^*(\mathcal{L}_\psi))\right)[{\rm dim}\,T'].
$$
It preserves equivariance by $S$ (which acts by translation on $\mathbb{A}^n$) and so does its restriction 

$$
\calF^{T'}:=j^*\circ\calF^{\mathbb{A}^n}\circ j_!:\calM(T')\rightarrow\calM(T').
$$
 The operator $\calF^{T'}:\calM(T')\rightarrow\calM(T')$ descends to an operator $\calF^T:\calM(T)\rightarrow\calM(T)$ such that the diagram
 
 $$
\xymatrix{\calM(T')\ar[rr]^{\calF^{T'}}&&\calM(T')\\
\calM(T)\ar[rr]^{\calF^T}\ar[u]^{\rho^*}&&\calM(T)\ar[u]^{\rho^*}}
$$ 
commutes. Therefore

$$
\calF^T(\overline{\mathbb{Q}}_{\ell, 1})=\rho_!\Phi^{T'}
$$
where $\overline{\mathbb{Q}}_{\ell,1}$ is the constant sheaf supported at $1\in T$, is a perverse sheaf.

\end{proof}

We then get an object $(\Phi_\rho^{[T/N]},\varphi_\rho^{[T/N]})\in\calM([T/N];F)\simeq\calM([T/W];F)$ by descending the object $(\Phi^T,\theta^T,\varphi^T)$ of $\calM(T;W,F)$ obtained from 

$$
\rho_!(\Phi^{T'},\theta^{T'},\varphi^{T'})\in\calM(T;W,F)
$$
by multiplying its $W$-equivariant structure by the linear character $\alpha:w\mapsto (-1)^{\ell(w)+\ell(w')}$ of $W$ where $\ell(w)$ (resp. $\ell(w')$) denotes the length of $w$ in $W$ (resp. the length of $w'$ in $S_n$). 
\bigskip

\begin{lemma}We have

$$
{\bf X}_{\Phi^{[T/N]}_\rho,\varphi^{[T/N]}_\rho}=\epsilon_G\,  \phi^{[T/N]}_\rho.
$$
where $\phi^{[T/N]}_\rho$ is the function defined in \S \ref{Spectraldef}.
\label{lem1}\end{lemma}

\begin{proof}By Remark \ref{remdecomp}, we need to prove that

$$
{\bf X}_{\mathfrak{o}_w(\Phi^T,\theta^T,\varphi^T)}\in\calC(T^{F\circ w})
$$
corresponds to $\epsilon_G\, \phi^{T_w}_\rho\in\calC(T_w^F)$ under the identification $\calC(T_w^F)\cong\calC(T^{F\circ w})$.

\bigskip

We have the commutative diagram 

$$
\xymatrix{\dcb(T';W,F)\ar[rr]^{\rho_!}\ar[d]_{\mathfrak{o}_w}&&\dcb(T;W,F)\ar[d]^{\mathfrak{o}_w}\\
\mathcal{D}_c^b(T';F\circ w)\ar[rr]^{\rho_!}\ar[d]_\bfX&&\mathcal{D}_c^b(T;F\circ w)\ar[d]^\bfX\\
\mathcal{C}(T'{^{F\circ w}})\ar[rr]^{(\rho^{F\circ w})_!}\ar@{}[d]|*=0[@]{\cong}&&\mathcal{C}(T^{F\circ w})\ar@{}[d]|*=0[@]{\cong}\\
\calC({T'}_{w'}^F)\ar[rr]^{\rho_w^F}&&\calC(T_w^F)}
$$
from which we deduce that

\begin{align*}
{\bf X}_{\mathfrak{o}_w(\Phi^T,\theta^T,\varphi^T)}&=\alpha(w)\,\rho^{F\circ w}_!\left({\bf X}_{\mathfrak{o}_w(\Phi^{T'},\theta^{T'},\varphi^{T'})}\right)\\
&=(-1)^{{\rm dim}\, T'}\alpha(w)\,\rho_{w\, !}^F(\phi_o^{T'_{w'}})\\
&=(-1)^{{\rm dim}\, T'}\alpha(w)c_{T'_{w'},T_w}\phi^{T_w}_\rho\\
&=\epsilon_G\,\phi^{T_w}_\rho.
\end{align*}
\end{proof}

Put 

$$
(\Phi^G_\rho,\varphi_\rho^G):=\calI_{[T/N]}^G\left(\Phi^{[T/N]}_\rho,\varphi^{[T/N]}_\rho\right).
$$

\begin{theorem} 

\begin{equation}
{\bf X}_{\Phi_\rho^G,\varphi^G_\rho}=\epsilon_G\, \phi^G_\rho.
\label{mainconj}\end{equation}

\label{BKmainconj}\end{theorem}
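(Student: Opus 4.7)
The plan is to combine three results already established in the paper: the geometric-induction trace formula in Theorem \ref{BKmainconj} (the earlier version, applied to the $W_G(T)$-invariant part), the pointwise identification provided by Lemma \ref{lem1}, and the explicit kernel formula (\ref{KBN}) for $\phi^G_\rho$.

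First, I would apply Theorem \ref{BKmainconj} with $K=\Phi$. Since $(\Phi,\varphi)$ carries the $W_G(T)\cdot F$-equivariant structure $\{\tilde r_w\}$ constructed just above, and since $(\Phi^G_\rho,\varphi^G_\rho)=({\rm Ind}'{_T^G}(\Phi)_1,\varphi^G_1)$ by definition, the theorem yields
\begin{equation*}
\bfX_{\Phi^G_\rho,\varphi^G_\rho}=\frac{1}{|W_G(T)|}\sum_{w\in W_G(T)}R_{T_w}^G\bigl({\bf X}_{\Phi_w,\tilde{\varphi}_w}\bigr).
\end{equation*}
This step is immediate: one only has to check that the twisted $W_G(T)\cdot F$-equivariant structure used to define $(\Phi_w,\tilde{\varphi}_w)$ is the one for which Theorem \ref{BKmainconj} was proved, which is just the definition.

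Second, Lemma \ref{lem1} tells us that ${\bf X}_{\Phi_w,\tilde{\varphi}_w}=\epsilon_G\,\phi^{T_w}_\rho$ for every $w\in W_G(T)$. Substituting this into the previous identity gives
\begin{equation*}
\bfX_{\Phi^G_\rho,\varphi^G_\rho}=\frac{\epsilon_G}{|W_G(T)|}\sum_{w\in W_G(T)}R_{T_w}^G\bigl(\phi^{T_w}_\rho\bigr).
\end{equation*}

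Third, I would invoke formula (\ref{KBN}), which was obtained from Lemma \ref{FL} and Proposition \ref{equiv}, namely
\begin{equation*}
\phi^G_\rho=\frac{1}{|W_G(T)|}\sum_{w\in W_G(T)}R_{T_w}^G\bigl(\phi^{T_w}_\rho\bigr).
\end{equation*}
Combining these two identities produces $\bfX_{\Phi^G_\rho,\varphi^G_\rho}=\epsilon_G\,\phi^G_\rho$, which is the desired conclusion.

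There is essentially no obstacle left at this stage: all of the nontrivial work has been absorbed into the earlier steps. The only thing to be slightly careful about is bookkeeping the sign $\epsilon_G$ (the factor $(-1)^{\ell(w)}=\epsilon_G\epsilon_{T_w}$ absorbed into Lemma \ref{lem1}) and matching the $W_G(T)\cdot F$-equivariant structures on both sides when applying Theorem \ref{BKmainconj}; but once the conventions established in the lemma are adopted, the theorem reduces to a one-line concatenation of the three previously proved identities.
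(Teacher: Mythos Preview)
Your proposal is correct and follows essentially the same route as the paper's own proof: apply the induction trace formula (the earlier Theorem~\ref{BKmainconj}) to the invariant part, substitute via Lemma~\ref{lem1}, and finish with the kernel identity. The only cosmetic difference is that you cite formula~(\ref{KBN}) directly, whereas the paper cites Theorem~\ref{maintheo}; these are equivalent through Lemma~\ref{BKconjlemma}.
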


\begin{proof}Follows from Theorem \ref{Lind}(3) and Formula (\ref{mainfor}).

\end{proof}

\begin{remark}Let $\sigma: T\rightarrow\GL_1$ be a character. We say that a cocharacter $\lambda:\GL_1\rightarrow T$ is \emph{$\sigma$-positive} (see \cite{NC}) if $\sigma\circ \lambda:\GL_1\rightarrow\GL_1$ is of the form $t\mapsto t^m$ for some positive integer $m$. The morphism $\rho^\flat:T^\flat\rightarrow T'=\T_n$ is given by $n$ characters $\lambda_1,\dots,\lambda_n$ which can be regarded as cocharacters of $T$ via $X(T^\flat)\simeq Y(T)$. Assume that the cocharacters $\lambda_1,\dots,\lambda_n$ are $\sigma$-positive. Braverman and Kazhdan \cite[Theorem 4.2(3)]{BK} proved that the complex $\mathcal{E}$ is an irreducible perverse sheaf on the image of $\rho:T'\rightarrow T$. Later on, Cheng and Ng\^o proved that this complex is actually a smooth $\ell$-adic sheaf on the image of $\rho$ \cite[Proposition 2.1]{NC}. Under the $\sigma$-positivity assumption and assuming that $\rho$ is surjective, we thus regard the complex ${\rm Ind}_T^G(\rho_!\Phi')$ as the intermediate extension of some semisimple smooth $\ell$-adic sheaf on the open subset of semisimple regular elements of $G$ on which the Weyl group $W$ acts.  Braverman and Kazhdan \cite{BK} conjectured that the characteristic function of ${\rm Ind}_T^G(\rho_!\Phi')^W$ coincides with the kernel $\phi^G_\rho$. In light of Theorem \ref{Lind}(2) we see that Theorem \ref{BKmainconj} proves a more general statement than their conjecture as we do not make any assumption of $\rho$. 
\label{remark}\end{remark}
\bigskip

\section{Appendix A}

The commutativity of Diagram (\ref{comdiagLie}) arises as a particular case of a result in \cite{letellier}. As the proof of the commutativity of Fourier transforms with Lusztig induction is relatively simple in the case of maximal tori (it is still a conjecture for arbitrary non-split Levi subgroups in arbitrary connected reductive groups, see \cite{letellier}), we give it for the convenience of the reader.
\bigskip

Assume that $G$ is an arbitrary connected reductive group with a geometric Frobenius $F:G\rightarrow G$. Its Lie algebra $\frakg$ is then equipped with a natural geometric Frobenius $F:\frakg\rightarrow\frakg$ and the adjoint action ${\rm Ad}$ of $G$ on $\frakg$ commutes with Frobenius. Let $T$ be a maximally split $F$-stable maximal torus with Lie algebra $\frakt$ and denote by $N$ the normalizer of $T$ in $G$ and put $W:=N/T$. We denote by $\car_\frakt$ the affine scheme $\frakt/\!/W$.

Define the geometric induction

$$
\calI_{[\frakt/N]}^\frakg:\calM([\frakt/N];F)\rightarrow\calM([\frakg/G];F)
$$
as in \S \ref{ind} with Lie algebras instead of groups. 

If the finite group $W$ acts on an $\overline{\F}_q$-scheme $X$, we denote by $\epsilon:\dcb(X;W)\rightarrow\dcb(X;W)$ the functor that maps an $W$-equivariant complex $(K,\theta)$ on $X$ to the $W$-equivariant complex $(K,\epsilon\theta)$ where $\epsilon\theta$ is the $W$-equivariant structure $\theta$ twisted by the sign character $\epsilon$ of $W$.

Define the geometric induction

$$
\calI_{[\frakt/N],\epsilon}^\frakg:\calM([\frakt/N];F)\rightarrow\calM([\frakg/G];F)
$$
where we use the kernel $\epsilon(\IC_{[\frakt/W]\times_{\car_\frakt}[\frakg/G]})$ instead of $\IC_{[\frakt/W]\times_{\car_\frakt}[\frakg/G]}$.
\bigskip

Assume given a $G$-invariant non-degenerate bilinear form $\langle\,,\,\rangle=\langle\,,\,\rangle_\frakg:\frakg\times\frakg\rightarrow \overline{\F}_q$ defined over $\F_q$ (invariant for the diagonal action of $G$ on $\frakg\times\frakg$). The existence of such bilinear form requires some restriction on the characteristic (see \cite[\S 2.5]{letellier}).
\bigskip

Consider the geometric Fourier transform $\calF^{[\frakg/G]}:\calM([\frakg/G];F)\rightarrow\calM([\frakg/G];F)$ defined by

$$
K\mapsto \pr_{2\, !}\left(\pr_1^*(K)\otimes\langle\,,\,\rangle^*(\calL_\psi)\right)[{\rm dim}\, \frakg]
$$
where $\pr_1,\pr_2:[(\frakg\times\frakg)/G]\rightarrow[\frakg/G]$ are the two projections ($G$ acting diagonally on $\frakg\times\frakg$).

Since the restriction of $\langle\,,\,\rangle$ to $\frakt\times\frakt$ remains non-degenerate and $N$-invariant, we also have a geometric Fourier transform $\calF^{[\frakt/N]}:\calM([\frakt/N];F)\rightarrow\calM([\frakt/N];F)$.
\bigskip

The aim of this section is to prove the following theorem.

\begin{theorem} The following diagram commutes

$$
\xymatrix{\calM([\frakt/N];F)\ar[d]_{\calF^{[\frakt/N]}}\ar[rr]^{\calI_{[\frakt/N],\epsilon}^\frakg\,(\nu_G)}&&\calM([\frakg/G];F)\ar[d]^{\calF^{[\frakg/G]}}\\
\calM([\frakt/N];F)\ar[rr]^{\calI_{[\frakt/N]}^\frakg}&&\calM([\frakg/G];F)}
$$
where $\nu_G$ is the dimension of the unipotent radical of a Borel subgroup of $G$.
\label{theoappendix}\end{theorem}

Fix an $F$-stable Borel subgroup $B$ containing $T$ with unipotent radical $U$. Let $\frakb$ and $\fraku$ be the Lie algebras of $B$ and $U$. 
\bigskip

Consider the variety

$$
Y:=\frakt\times\frakg\times (G/B)
$$
and the closed subschemes

$$
X:=\{(t,x,gB)\in Y\,|\, {\rm Ad}(g^{-1})(x)\in t+\fraku\},\hspace{1cm}X':=\{(t',x',gB)\in Y\,|\, g^{-1}x'g\in -t'+\fraku\}.
$$
Then the vector bundle $X'\rightarrow G/B$ is the orthogonal of $X\rightarrow G/B$ in $Y\rightarrow G/B$ with respect to the form $\langle\,,\,\rangle_\frakg+\langle\,,\,\rangle_\t$ on $\frakg\times\frakt$.
\bigskip

Denote by  $\calF^Y:\dcb(Y)\rightarrow\dcb(Y)$ the geometric Fourier transform relative to $G/B$ with kernel $\langle\,,\rangle_Y^*(\calL_\psi)$ where 

$$
\xymatrix{\langle\,,\,\rangle_Y: (\frakt\times\frakg)\times(\frakt\times\frakg)\times G/B\ar[rr]^-{\pr_{12}}&&(\frakt\times\frakg)\times(\frakt\times\frakg)\ar[rr]^-{\langle\,,\,\rangle_\frakg+\langle\,,\,\rangle_\frakt}&&\overline{\F}_q.}
$$
Namely if $\pr_1,\pr_2$ denote the two projections $(\frakt\times\frakg)\times(\frakt\times\frakg)\times G/B\rightarrow\frakg\times\frakt$, then 
$$
\calF^Y(K)=\pr_{2\, !}\left(\pr_1^*(K)\otimes \langle\,,\,\rangle_Y^*(\calL_\psi)\right)[{\rm dim}\, \frakt\times\frakg].
$$
The following result is straightforward.

\begin{lemma}
$$
\calF^Y(\overline{\mathbb{Q}}_{\ell, X})\simeq\overline{\mathbb{Q}}_{\ell,X'}(-{\rm dim}\, \frakb).
$$
\end{lemma}
Moreover if $p:Y\rightarrow\frakt\times\frakg$ denotes the projection, then 

$$
p_!\circ \calF^Y=\calF^{\frakt\times\frakg}\circ p_!
$$
and so

$$
\calF^{\frakt\times\frakg}(p_!\overline{\mathbb{Q}}_{\ell,X})\simeq p_!\overline{\mathbb{Q}}_{\ell,X'}(-{\rm dim}\,\frakb).
$$
Put $S:=\frakt\times_{\car_\frakt}\frakg$ and let $S'\subset\frakt\times\frakg$ be the subscheme of pairs $(t',x')$ such that the semisimple part of $x'$ is ${\rm Ad}(G)$-conjugate to $-t'$. 

The projections $\pi:X\rightarrow S$ and $\pi':X'\rightarrow S'$ being small resolutions of singularities we have

$$
p_!\overline{\mathbb{Q}}_{\ell,X}=\pi_!\overline{\mathbb{Q}}_{\ell,X}=\IC_S,\hspace{1cm}p_!\overline{\mathbb{Q}}_{\ell,X'}=\pi'_!\overline{\mathbb{Q}}_{\ell,X'}=\IC_{S'}.
$$
Therefore 

\begin{equation}
\calF^{\frakt\times\frakg}(\IC_S)\simeq\IC_{S'}(-{\rm dim}\, \frakb).
\label{iso}\end{equation}
As the chosen Borel $B$ is $F$-stable, this isomorphism is compatible with $F$-equivariant structures. However, this isomorphism depends on the choice of $B$ and so \emph{apriori} does not preserves the obvious $W$-equivariant structures on both sides (where the action of $W$ on $\frakt\times\frakg$ is given by the action of $W$ on the first factor).
\bigskip

\begin{proposition}The isomorphism (\ref{iso}) induces an isomorphism

\begin{equation}
\calF^{\frakt\times\frakg}(\epsilon(\IC_S))\simeq \IC_{S'}(-{\rm dim}\, \frakb).
\label{iso1}\end{equation}
in $\calM(S';W,F)$.

\label{isoprop}\end{proposition}

To prove the proposition we need to see that the obvious $W$-equivariant structure on $\IC_{S'}$ and the one induced by that of $\calF^{\frakg}(\IC_S)$ through the isomorphism (\ref{iso}) differs by the sign character of $W$. 

Notice that since $\IC_{S'}$ is (up to a shift) a simple perverse sheaf, any two $W$-equivariant structures on $\IC_{S'}$  differs by a linear character of $W$. We compute this linear character on the global compactly supported cohomology.

\begin{lemma} The action of $W$ on the top cohomology of $R\Gamma_c(S',\IC_{S'})$ is trivial.
\end{lemma}

\begin{proof}Notice that the top cohomology of $R\Gamma_c(S',\IC_{S'})$ is the top compactly supported cohomology of $X'$ and it is certainly well-know that the Springer action of $W$ on $H_c^{\rm top}(X',\Q)$ is trivial. As we could not locate a reference in the literature, we give a proof involving $S'$ as an essential ingredient.

We have

$$
\calH^{\rm top}R\Gamma_c(S',\IC_{S'})=H^{2d}_c(S',\IC_{S'})
$$
where $d={\rm dim}\, S'={\rm dim}\,\frakg$.

Denote by $S'_{\rm rss}$ the open subset of $S'$ of semisimple regular elements and by $j$ the inclusion $S'_{\rm rss}\hookrightarrow S'$. The map

\begin{equation}
H_c^{2d}(S'_{\rm rss},\Q)\rightarrow H^{2d}_c(S',\IC_{S'})
\label{isoco}\end{equation}
induced by the morphism $j_!j^*\IC_{S'}\rightarrow\IC_{S'}$ is naturally $W$-equivariant. 

The action of $W$ on $H^{2d}_c(S'_{\rm rss},\Q)$ is induced by the action of $W$ on the set of irreducible components of $S'_{\rm rss}$ of maximal dimension. As $S'_{\rm rss}$ is irreducible, this action is thus trivial.

We are thus reduced to prove that the morphism (\ref{isoco}) is an isomorphism. To see that we use the small map $\pi':X'\rightarrow S'$. We have the following commutative diagram

$$
\xymatrix{H_c^{2d}(S'_{\rm rss},\Q)\ar[rr]\ar[d]_{\simeq}&&H_c^{2d}(S',\IC_{S'})\ar[d]^{\simeq}\\
H_c^{2d}(X'_{\rm rss},\Q)\ar[rr]&&H_c^{2d}(X',\Q)}
$$
The bottom arrow is an isomorphism as 

$$
{\rm dim}\, (X'\backslash X'_{\rm rss})<d.
$$
\end{proof}

A simple calculation shows that

$$
R\Gamma_c(\frakt\times\frakg,\calF^{\frakt\times\frakg}(\IC_S))=\IC_{S,0}[-{\rm dim}\,\frakt\times\frakg](-{\rm dim}\,\frakt\times\frakg)\\
$$
where $\IC_{S,0}$ denotes the pullback of $\IC_S$ at $0\in S$.

Proposition \ref{isoprop} follows thus from the following lemma.

\begin{lemma}The group $W$ acts on $\calH^{\rm top}(\IC_{S,0}[-{\rm dim}\, \frakg\times\frakt])=\calH^{2{\rm dim}\,(G/B)}\IC_{S,0}$ by the sign character.
\end{lemma}

\begin{proof}If we notice that $\calH^{2{\rm dim}\,G/B}(\IC_{S,0})=H^{2{\rm dim}\, G/B}(G/B,\Q)$ then the above lemma is well-known as the Springer action of $W$ on the top cohomology of $G/B$ is known to be  the sign character. This can be indeed computed using the result of Borho-MacPherson \cite{BM} saying that the Springer action of $W$ on $H^i(G/B,\Q)$ coincides with the so-called classical action of $W$ on $H^i(G/B,\Q)=H^i(G/T;\Q)$.
\end{proof}

\begin{proof}[Proof of Theorem \ref{theoappendix}]Since $\calM([\frakt/N])\simeq\calM([\frakt/W])$, it is sufficient to prove the analogous statement with $[\frakt/W]$ instead of $[\frakt/N]$. Consider the quotient stacks

$$
\calS:=[S/G],\hspace{1cm}\calS':=[S'/G]
$$
for the action of $G$ on $\frakg$. Then $\calS$ and $\calS'$ are left with an action of $W$ on the other factor.
\bigskip

The isomorphism (\ref{iso1}) descends to an isomorphism

$$
\calF^{[\frakt/W]\times[\frakg/G]}\left(\epsilon(\IC_{[\calS/W]})(\nu_G)\right)\simeq \IC_{[\calS'/W]}(-{\rm dim}\, \frakt).
$$
Since we have an isomorphism of functors (involutivity of Fourier)

$$
\calF^{[\frakt/W]}\circ\calF^{[\frakt/W]}\simeq a^* \, ({\rm dim}\, \frakt)
$$
where $a:[\frakt/W]\rightarrow[\frakt/W]$ is induced by the map $\frakt\rightarrow\frakt$, $t\mapsto -t$, we deduce that

\begin{equation}
(1\times\calF^{[\frakg/G]})\left(\epsilon(\IC_{[\calS/W]})(\nu_G)\right)\simeq (\calF^{[\frakt/W]}\times 1)(\IC_{[\calS/W]}),
\label{Fourier-iso}\end{equation}
where $1\times \calF^{[\frakg/G]}:\calM([\frakt/W]\times[\frakg/G])\rightarrow\calM([\frakt/W]\times[\frakg/G])$ is obtained by doing Fourier transform on the first factor and nothing on the second, i.e. it is defined from the cohomological correspondence

$$
\xymatrix{[\frakt/W]\times[\frakg/G]&&[\frakt/W]\times[(\frakg\times\frakg)/G]\ar[rr]^-{{\rm Id}_{[\frakt/W]}\times\pr_2}\ar[ll]_-{{\rm Id}_{[\frakt/W]}\times\pr_1}&&[\frakt/W]\times[\frakg/G]}
$$
with kernel $(\langle\,,\,\rangle_\frakg^*\calL_\psi)\boxtimes \overline{\mathbb{Q}}_{\ell,[\frakt/W]}$, and where $\calF^{[\frakt/W]}\times 1:\calM([\frakt/W]\times[\frakg/G])\rightarrow\calM([\frakt/W]\times[\frakg/G])$ is defined by doing  Fourier transform on the second factor and nothing on the first one.
\bigskip

A simple calculation shows that the composition of functors $\calF^{[\frakg/G]}\circ\calI_{[\frakt/W],\epsilon}^\frakg\,(\nu_G):\calM([\frakt/W])\rightarrow\calM([\frakg/G])$ is given by the cohomological correspondence

\begin{equation}
\xymatrix{[\frakt/W]&&[\frakt/W]\times[\frakg/G]\ar[rr]\ar[ll]&&[\frakg/G]}
\label{cor2}\end{equation}
whose kernel is the left hand side of (\ref{Fourier-iso}), while the composition of functors $\calI_{[\frakt/W]}^\frakg\circ\calF^{[\frakt/W]}$ is given by the correspondence (\ref{cor2}) with kernel the right hand side of (\ref{Fourier-iso}).

\end{proof}

\section{Appendix B}

For a subvariety of $\gl_n$, we denote by $\Tr_Z$ and $\det_Z$ the restriction of the trace and the determinant to $Z$. Denote by $\B_n=\T_n\U_n$ the Borel subgroup of $\GL_n$ of upper triangular matrices, by ${\rm N}_n$ the normalizer of $\T_n$ in $\GL_n$ and ${\rm W}_n={\rm N}_n/\T_n$.

\begin{theorem}We have

$$
({\det}_{\GL_n})_!\left(\Tr_{\GL_n}^*\calL_\psi\right)[{\rm dim}\, \GL_n]({\rm dim}\, \U_n)\simeq({\det}_{\T_n})_!\left(\Tr_{\T_n}^*\calL_\psi\right)[{\rm dim}\, \T_n].
$$
\end{theorem}

\begin{proof}Let ${\rm X}_n$ be the complementary of $\B_n$ in $\GL_n$. Then by the Bruhat decomposition

$$
{\rm X}_n=\coprod_{w\in {\rm W}_n\backslash\{1\}}\B_n\dot{w}\U_{n,w},
$$
where $\dot{w}$ denotes a representative of $w$ in ${\rm N}_n$ and $\U_{n,w}:=\U_n\cap \dot{w}^{-1}\U_n\dot{w}$. 

We have a distinguished triangle 

$$
\xymatrix{({\det}_{{\rm X}_n})_!\left(\Tr_{{\rm X}_n}^*\calL_\psi\right)\ar[r]&({\det}_{\GL_n})_!\left(\Tr_{\GL_n}^*\calL_\psi\right)\ar[r]&({\det}_{\B_n})_!\left(\Tr_{\B_n}^*\calL_\psi\right)\ar[r]&}
$$
Since ${\det}_{\B_n}$ factorizes through ${\det}_{\T_n}$ via the projection $\B_n\rightarrow\T_n$ and since $\Tr_{\B_n}^*\calL_\psi$ is the pullback of $\Tr_{\T_n}^*\calL_\psi$ along that projection, we have

$$
({\det}_{\B_n})_!\left(\Tr_{\B_n}^*\calL_\psi\right)\simeq({\rm det}_{\T_n})_!\left(\Tr_{\T_n}^*\calL_\psi\right)[-2{\rm dim}\, \U_n](-{\rm dim}\, \U_n).
$$
It remains to see that $({\det}_{{\rm X}_n})_!\left(\Tr_{{\rm X}_n}^*\calL_\psi\right)=0$. For $w\neq 1$, put

$$
X_{w,n}:=\B_n\dot{w}\U_{w,n}.
$$
It is enough to show that 

$$
({\det}_{X_{w,n}})_!\left(\Tr_{X_{w,n}}^*\calL_\psi\right)=0,
$$
for all $w\neq 1$. The morphism ${\det}_{X_{w,n}}$ factorizes through the projection $p_w:X_{w,n}\rightarrow \T_n\dot{w}\U_{w,n}$ which is a $\U_n$-torsor. It is thus enough to prove that

$$
(p_w)_!\left(\Tr_{X_{w,n}}^*\calL_\psi\right)=0.
$$
Let $x\in \T_n\dot{w}\U_{w,n}$ and $p_{w,x}:\U_nx\rightarrow \{x\}$. We need to see that

\begin{equation}
(p_{w,x})_!\left(\Tr_{\U_nx}^*\calL_\psi\right)=0.
\label{intereq}\end{equation}

Writing $\U_n=1+\fraku_n$, we see that
$$
\Tr_{\U_nx}^*\calL_\psi\simeq \Tr_x^*\calL_\psi\boxtimes \Tr_{\fraku_nx}^*\calL_\psi.
$$
Consider 

$$
f:\fraku_nx\simeq\fraku_n\rightarrow \bigoplus_{i<j}\mathbb{A}^1,\,\,\{u_{ij}\}_{i<j}\mapsto\sum_{i<j}u_{ij}x_{ji}.
$$
Then $\Tr_{\fraku_nx}^*\calL_\psi=f^*\left(\bigboxtimes_{i<j}\mathcal{L}_\psi\right)=\bigboxtimes_{i<j}\mathcal{L}_{\psi,x_{ji}}$ where $\mathcal{L}_{\psi,x_{ji}}$ is the pullback of $\mathcal{L}_\psi$ along the map $\mathbb{A}^1\rightarrow\mathbb{A}^1$, $u\mapsto ux_{ji}$. 

As $x\notin\B_n$, for some $i<j$ we have $x_{ji}\neq 0$ and so $\mathcal{L}_{\psi,x_{ji}}\neq \Q$. Therefore, the proper pushforward of $\mathcal{L}_{\psi, x_{ji}}$ on a point is zero. From Kunn\"eth formula we deduce (\ref{intereq}).
\end{proof}

\end{document}